\documentclass[11pt,reqno]{amsart}

\setlength{\parindent}{0pt} \setlength{\textwidth}{5.8in}
\setlength{\oddsidemargin}{0.4in}
\setlength{\evensidemargin}{0.4in}
\parskip = 4pt
\newtheorem{proposition}{Proposition}[section]
\newtheorem{lemma}[proposition]{Lemma}
\newtheorem{corollary}[proposition]{Corollary}
\newtheorem{theorem}[proposition]{Theorem}

\theoremstyle{definition}
\newtheorem{definition}[proposition]{Definition}
\newtheorem{example}[proposition]{Example}

\newtheorem{remark}[proposition]{Remark}

\newcommand{\thlabel}[1]{\label{th:#1}}
\newcommand{\thref}[1]{Theorem~\ref{th:#1}}
\newcommand{\selabel}[1]{\label{se:#1}}
\newcommand{\seref}[1]{Section~\ref{se:#1}}
\newcommand{\lelabel}[1]{\label{le:#1}}
\newcommand{\leref}[1]{Lemma~\ref{le:#1}}
\newcommand{\prlabel}[1]{\label{pr:#1}}
\newcommand{\prref}[1]{Proposition~\ref{pr:#1}}
\newcommand{\colabel}[1]{\label{co:#1}}
\newcommand{\coref}[1]{Corollary~\ref{co:#1}}
\newcommand{\relabel}[1]{\label{re:#1}}
\newcommand{\reref}[1]{Remark~\ref{re:#1}}
\newcommand{\exlabel}[1]{\label{ex:#1}}
\newcommand{\exref}[1]{Example~\ref{ex:#1}}
\newcommand{\delabel}[1]{\label{de:#1}}
\newcommand{\deref}[1]{Definition~\ref{de:#1}}
\newcommand{\eqlabel}[1]{\label{eq:#1}}
\newcommand{\equref}[1]{(\ref{eq:#1})}

\def\ot{\otimes}

\newcommand{\Cc}{\mathcal{C}}

\def\*C{{}^*\hspace*{-1pt}{\Cc}}
\def\text#1{{\rm {\rm #1}}}

\input xy
\xyoption {all} \CompileMatrices

\usepackage{amssymb}
\usepackage{color,amssymb,graphicx,amscd,amsmath}
\usepackage[colorlinks,urlcolor=blue,linkcolor=blue,citecolor=blue]{hyperref}

\begin{document}
\title[Unified products for Leibniz algebras]
{Unified products for Leibniz algebras. Applications}

\author{A. L. Agore}
\address{Faculty of Engineering, Vrije Universiteit Brussel, Pleinlaan 2, B-1050 Brussels, Belgium}
\email{ana.agore@vub.ac.be and ana.agore@gmail.com}

\author{G. Militaru}
\address{Faculty of Mathematics and Computer Science, University of Bucharest, Str.
Academiei 14, RO-010014 Bucharest 1, Romania}
\email{gigel.militaru@fmi.unibuc.ro and gigel.militaru@gmail.com}

\subjclass[2010]{16T10, 16T05, 16S40}

\thanks{A.L. Agore is research fellow ''Aspirant'' of FWO-Vlaanderen.
This work was supported by a grant of the Romanian National
Authority for Scientific Research, CNCS-UEFISCDI, grant no.
88/05.10.2011.}

\subjclass[2010]{17A32, 17B05, 17B56} \keywords{The extension and
the factorization problem, non-abelian cohomology for Leibniz
algebras, complements}


\begin{abstract}
Let $\mathfrak{g}$ be a Leibniz algebra and $E$ a vector space
containing $\mathfrak{g}$ as a subspace. All Leibniz algebra
structures on $E$ containing $\mathfrak{g}$ as a subalgebra are
explicitly described and classified by two non-abelian
cohomological type objects: ${\mathcal H}{\mathcal
L}^{2}_{\mathfrak{g}} \, (V, \, \mathfrak{g} )$ provides the
classification up to an isomorphism that stabilizes $\mathfrak{g}$
and ${\mathcal H}{\mathcal L}^{2} \, (V, \, \mathfrak{g} )$ will
classify all such structures from the view point of the extension
problem - here $V$ is a complement of $\mathfrak{g}$ in $E$. A
general product, called the unified product, is introduced as a
tool for our approach. The crossed (resp. bicrossed) products
between two Leibniz algebras are introduced as special cases of
the unified product: the first one is responsible for the
extension problem while the bicrossed product is responsible for
the factorization problem. The description and the classification
of all complements of a given extension $\mathfrak{g} \subseteq
\mathfrak{E} $ of Leibniz algebras are given as a converse of the
factorization problem. They are classified by another
cohomological object denoted by ${\mathcal H}{\mathcal A}^{2}
(\mathfrak{h}, \mathfrak{g} \, | \, (\triangleright,
\triangleleft, \leftharpoonup, \rightharpoonup))$, where
$(\triangleright, \triangleleft, \leftharpoonup ,
\rightharpoonup)$ is the canonical matched pair associated to a
given complement $\mathfrak{h}$. Several examples are worked out
in details.
\end{abstract}

\maketitle

\section*{Introduction}
Leibniz algebras were introduced by Bloh \cite{Bl1} under the name
of D-algebras and rediscovered later on by Loday \cite{Lod2} as
non-commutative generalizations of Lie algebras. A systematic
study of Leibniz algebras was initiated in \cite{LoP}, \cite{cu}.
Since then, Leibniz algebras generated a lot of interest and
became a field of study in its own right. Several classical
theorems known in the context of Lie algebras were extended to
Leibniz algebras, there exists a (co)homology theory for them, the
classification of certain types of Leibniz algebras of a given
(small) dimension was recently performed, their interaction with
vertex operator algebras, the Godbillon-Vey invariants for
foliations or differential geometry was highlighted. For more
details and motivations we refer to \cite{AOR}, \cite{AAO},
\cite{Bar}, \cite{ACM}, \cite{cam}, \cite{CK}, \cite{CC},
\cite{CI}, \cite{FM}, \cite{Go}, \cite{hu}, \cite{KW2000},
\cite{Lod2}, \cite{LoP}, \cite{mas}, \cite{RHa}, \cite{RHb} and
the references therein. The starting point of this paper is the
following question:

\textbf{Extending structures problem.} \textit{Let $\mathfrak{g}$
be a Leibniz  algebra and $E$ a vector space containing
$\mathfrak{g}$ as a subspace. Describe and classify the set of all
Leibniz algebra structures $[-, -]$ that can be defined on $E$
such that $\mathfrak{g}$ becomes a Leibniz subalgebra of $(E, [-,
-])$.}

In this paper, we will provide an answer to the above problem as
follows: first we will describe explicitly all Leibniz algebra
structures on $E$ which contain $\mathfrak{g}$ as a subalgebra;
then we will classify them up to a Leibniz algebra isomorphism
$\varphi : E \to E$ that \emph{stabilizes} $\mathfrak{g}$, that is
$\varphi$ acts as the identity on $\mathfrak{g}$. The extending
structures (ES) problem was formulated at the level of groups in
\cite{am-2010} and for arbitrary categories in \cite{am-2011}
where it was studied for quantum groups; recently we approached
the problem for Lie algebras \cite{am-2013}. The ES problem is a
very difficult one. If $\mathfrak{g} = \{0\}$, then the ES problem
asks for the classification of all Leibniz algebra structures on
an arbitrary vector space $E$, which is of course a hopeless
problem for vector spaces of large dimension: the classification
of all $3$-dimensional (resp. $4$-dimensional) Leibniz algebras
was finished only recently in \cite{CI} (resp. \cite{CK}). For
this reason, from now on we will assume that $\mathfrak{g} \neq
\{0\}$. Even though the ES problem is a difficult one, we can
still provide detailed answers to it in certain special cases
which depend on the choice of the Leibniz algebra $\mathfrak{g}$
and mainly on the codimension of $\mathfrak{g}$ in $E$. It
generalizes and unifies the \emph{extension problem} and the
\emph{factorization problem}. The extension problem asks for the
classification of all extensions of $\mathfrak{h}$ by
$\mathfrak{g}$ and it was first studied in \cite{LoP} for
$\mathfrak{g}$ abelian; in this case all such extensions are
classified by the second cohomology group ${\rm HL}^2
(\mathfrak{h}, \, \mathfrak{g})$ \cite[Proposition 1.9]{LoP}. The
fact that $\mathfrak{g}$ is abelian is essential in proving this
classification result. However, a classification result can still
be proved in the non-abelian case and the classification object
denoted by ${\mathbb H} {\mathbb L}^2 (\mathfrak{h}, \,
\mathfrak{g})$ will generalize the second cohomology group ${\rm
HL}^2 (\mathfrak{h}, \, \mathfrak{g})$ and it will be explicitly
constructed as a special case of the ES problem. The main drawback
of this construction is the fact that ${\mathbb H} {\mathbb L}^2
(\mathfrak{h}, \, \mathfrak{g})$ does not arise as a cohomology
group of a certain complex, it will be constructed using the
theory of crossed products for Leibniz algebras. To conclude, the
extension problem appears as a special case of the ES problem as
follows: if in the ES problem we replace the condition
''$\mathfrak{g}$ is a Leibniz subalgebra of $(E, [-, -])$'' by a
more restrictive one, namely ''$\mathfrak{g}$ is a two sided ideal
of $E$ and the quotient $E/ \mathfrak{g}$ is isomorphic to a given
Leibniz algebra $\mathfrak{h}$'', then what we obtain is in fact
the extension problem.

On the other hand, if in the ES problem we add the additional
hypothesis ''the complement of $\mathfrak{g}$ in $E$ is isomorphic
to a given Leibniz algebra $\mathfrak{h}$'' we obtain the
factorization problem for Leibniz algebras which can be explicitly
formulated as follows: describe and classify all Leibniz algebras
$\Xi$ that factorize through two given Leibniz algebras
$\mathfrak{g}$ and $\mathfrak{h}$, i.e. $\Xi$ contains
$\mathfrak{g}$ and $\mathfrak{h}$ as Leibniz subalgebras such that
$\Xi = \mathfrak{g} + \mathfrak{h}$ and $\mathfrak{g} \cap
\mathfrak{h} = \{0\}$. Exactly as in the case of Lie algebras
\cite[Theorem 4.1]{majid}, \cite[Theorem 3.9]{LW} we will
introduce the concept of a matched pair of Leibniz algebras and we
will associate to it a bicrossed product which will be responsible
for the factorization problem. However, in this case the
definition of the concept of a matched pair for Leibniz algebras
(\deref{mpLei}) is a lot more elaborated and difficult then the
one for Lie algebras. The last section is devoted to the converse
of the factorization problem. It consists of the following
question introduced in \cite{am-2013b} in the context of Hopf
algebras and Lie algebras:

\textbf{Classifying complements problem.} \textit{Let
$\mathfrak{g} \subseteq \mathfrak{E}$ be an extension of Leibniz
algebras. If a complement of $\mathfrak{g}$ in $\mathfrak{E}$
exists, describe explicitly and classify all complements of
$\mathfrak{g}$ in $\mathfrak{E}$, i.e. Leibniz subalgebras
$\mathfrak{h}$ of $\mathfrak{E}$ such that $\mathfrak{E} =
\mathfrak{g} + \mathfrak{h}$ and $\mathfrak{g} \cap \mathfrak{h} =
\{0\}$.}

The paper is organized as follows: in \seref{unifiedprod} we
introduce the abstract construction of the \emph{unified product}
$\mathfrak{g} \ltimes V$ for Leibniz algebras: it is associated to
a Leibniz algebra $\mathfrak{g}$, a vector space $V$ and a system
of data $\Omega(\mathfrak{g}, V) = \bigl(\triangleleft, \,
\triangleright, \, \leftharpoonup, \, \rightharpoonup,\, f, \{-,
\, -\} \bigl)$ called an extending datum of $\mathfrak{g}$ through
$V$. \thref{1} establishes the set of axioms that has to be
satisfied by $\Omega(\mathfrak{g}, V)$ such that $\mathfrak{g}
\ltimes V$ with a given bracket becomes a Leibniz algebra, i.e. is
a unified product. In this case, $\Omega(\mathfrak{g}, V) =
\bigl(\triangleleft, \, \triangleright, \, \leftharpoonup, \,
\rightharpoonup,\, f, \{-, \, -\} \bigl)$ will be called a
\emph{Leibniz extending structure} of $\mathfrak{g}$ through $V$.
Now let $\mathfrak{g}$ be a Leibniz algebra, $E$ a vector space
containing $\mathfrak{g}$ as a subspace and $V$ a given complement
of $\mathfrak{g}$ in $E$. \thref{classif} provides the answer to
the description part of the ES problem: there exists a Leibniz
algebra structure $[-,-]$ on $E$ such that $\mathfrak{g}$ is a
subalgebra of $(E, [-,-])$ if and only if there exists an
isomorphism of Leibniz algebras $(E, [-,-]) \cong \mathfrak{g}
\ltimes V$, for some Leibniz extending structure
$\Omega(\mathfrak{g}, V) = \bigl(\triangleleft, \, \triangleright,
\, \leftharpoonup, \, \rightharpoonup,\, f, \{-, \, -\} \bigl)$ of
$\mathfrak{g}$ through $V$. The theoretical answer to the
classification part of the ES problem is given in \thref{main1}:
we will construct explicitly a relative cohomology 'group',
denoted by ${\mathcal H} {\mathcal L}^{2}_{\mathfrak{g}} \, (V, \,
\mathfrak{g} )$, which will be the classifying object of all
extending structures of the Leibniz algebra $\mathfrak{g}$ to $E$
- the classification is given up to an isomorphism of Leibniz
algebras which stabilizes $\mathfrak{g}$. The construction of the
second classifying object, denoted by ${\mathcal H} {\mathcal
L}^{2} \, (V, \, \mathfrak{g} )$ is also given and it
parameterizes all extending structures of $\mathfrak{g}$ to $E$ up
to an isomorphism which simultaneously stabilizes $\mathfrak{g}$
and co-stabilizes $V$ - i.e. this classification is given from the
point of view of the extension problem. In \seref{exemple} we give
some explicit examples of computing ${\mathcal H}{\mathcal L}
^{2}_{\mathfrak{g}} \, (V, \, \mathfrak{g} )$ and ${\mathcal
H}{\mathcal L} ^{2} \, (V, \, \mathfrak{g} )$ in the case of
\emph{flag extending structures} as defined in \deref{flagex}. The
main result of this section is \thref{clasdim1}: several special
cases of it are discussed and explicit examples are given.
\seref{cazurispeciale} deals with two main special cases of the
unified product. The \emph{crossed product} of two Leibniz
algebras is introduced as a special case of the unified product.
\coref{croslieide} shows that any extension of a given Leibniz
algebra $\mathfrak{h}$ by a Leibniz algebra $\mathfrak{g}$ is
equivalent to a crossed product extension and the classifying
object ${\mathbb H} {\mathbb L}^2 (\mathfrak{h}, \, \mathfrak{g})$
of all extensions of $\mathfrak{h}$ by $\mathfrak{g}$ is
constructed in \coref{extprobra} as a generalization of the second
Loday-Pirashvili cohomology group ${\rm HL}^2 (\mathfrak{h}, \,
\mathfrak{g})$ \cite{LoP}. The concept of matched pair of Leibniz
algebras is introduced in \deref{mpLei} generalizing the one for
Lie algebras \cite[Theorem 3.9]{LW}, \cite[Theorem 4.1]{majid}. To
any matched pair of Leibniz algebras the \emph{bicrossed product}
is constructed as the tool responsible for the factorization
problem (\coref{bicrfactor}). Finally, \seref{complements} gives
the full answer to the classifying complements problem for Leibniz
algebras. The main result of the section is
\thref{clasformelorLie}: if $\mathfrak{g}$ is a Leibniz subalgebra
of $\mathfrak{E}$ and $\mathfrak{h}$ is a fixed complement of
$\mathfrak{g}$ in $\mathfrak{E}$, then the isomorphism classes of
all complements of $\mathfrak{g}$ in $\mathfrak{E}$ are
parameterized by a certain cohomological object denoted by
${\mathcal H} {\mathcal A}^{2} (\mathfrak{h}, \mathfrak{g} \, | \,
(\triangleright, \triangleleft, \leftharpoonup, \rightharpoonup)
)$ which is explicitly constructed, where $(\triangleright,
\triangleleft, \leftharpoonup, \rightharpoonup)$ is the
\emph{canonical matched pair} associated to the factorization
$\mathfrak{E} = \mathfrak{g} + \mathfrak{h}$. The key points in
proving this result are \thref{deforLie} and
\thref{descrierecomlie}.

\section{Preliminaries}\selabel{prel}
All vector spaces, linear or bilinear maps are over an arbitrary
field $k$.  A map $f: V \to W$ between two vector spaces is called
the trivial map if $f (v) = 0$, for all $v\in V$. Let
$\mathfrak{g} \leq E$ be a subspace in a vector space $E$; a
subspace $V$ of $E$ such that $E = \mathfrak{g} + V$ and $V \cap
\mathfrak{g} = 0$ is called a complement of $\mathfrak{g}$ in $E$.
Such a complement is unique up to an isomorphism and its dimension
is called the codimension of $\mathfrak{g}$ in $E$. A Leibniz
algebra is a vector space $\mathfrak{g}$, together with a bilinear
map $[- , \, -] : \mathfrak{g} \times \mathfrak{g} \to
\mathfrak{g}$ satisfying the Leibniz identity, that is:
\begin{equation}\eqlabel{lz}
[g, \, [h, \,l] \, ] = [ \, [g, \, h], \, l ] - [ \,[g, \, l], \,
h ]
\end{equation}
for all $g$, $h$, $l\in \mathfrak{g}$. Any Lie algebra is a
Leibniz algebra, and a Leibniz algebra satisfying $[g, \, g] = 0$,
for all $g \in \mathfrak{g}$ is a Lie algebra. The typical example
of a Leibniz algebra is the following \cite{Lod2}: let
$\mathfrak{g}$ be a Lie algebra, $(M, \triangleleft)$ a right
$\mathfrak{g}$-module and $\mu : M \to \mathfrak{g}$ a
$\mathfrak{g}$-equivariant map, i.e. $\mu (m \triangleleft g) =
[\mu (m), g]$, for all $m \in M$ and $g \in \mathfrak{g}$. Then
$M$ is a Leibniz algebra with the bracket $[m, n]_{(\triangleleft,
\, \mu)} := m \triangleleft \mu(n)$, for all $m$, $n\in M$.
Another important example was constructed in \cite{KP}: if
$\mathfrak{g}$ is a Lie algebra, then $\mathfrak{g} \ot
\mathfrak{g}$ is a Leibniz algebra with the bracket given by $[x
\ot y, a \ot b] := [x, [a, b]] \ot y + x \ot [y, [a, b]]$, for all
$x$, $y$, $a$, $b \in \mathfrak{g}$. For other interesting
examples of Leibniz algebras we refer to \cite{LoP}.

Let $\mathfrak{g}$ be a Leibniz algebra. A subspace $I \leq
\mathfrak{g}$ is called a two-sided ideal of $\mathfrak{g}$ if
$[x, g] \in I$ and $[g, x] \in I$, for all $x\in I$ and $g \in
\mathfrak{g}$. $\mathfrak{g}$ is called perfect if $[ \mathfrak{g}
, \mathfrak{g}] = \mathfrak{g}$ and abelian if $[ \mathfrak{g} ,
\mathfrak{g}] = 0$. By $Z( \mathfrak{g})$ we shall denote the
center of $\mathfrak{g}$, that is the two-sided ideal consisting
of all $g \in \mathfrak{g}$ such that $[g, \, x] = [x, \, g] = 0$,
for all $x \in \mathfrak{g}$. ${\rm Der} (\mathfrak{g})$ stands
for the space of all derivations of $\mathfrak{g}$, that is, all
linear maps $\Delta: \mathfrak{g} \to \mathfrak{g}$ such that for
any $g$, $h\in \mathfrak{g}$ we have
$$
\Delta ([g, \, h]) = [\Delta(g), \, h] + [g, \, \Delta(h)]
$$
A key role in the construction of the different non-abelian
cohomological objects arising from the classification part of the
ES problem will be played both by the classical space of
derivations and by the space of anti-derivations as defined below.

\begin{definition} \delabel{antiderivari}
An \emph{anti-derivation} of a Leibniz algebra $\mathfrak{g}$ is a
linear map $D : \mathfrak{g} \to \mathfrak{g}$ such that
\begin{equation}\eqlabel{antider}
D ([g, \, h]) = [D(g), \, h] - [D(h), \, g]
\end{equation}
for all $g$, $h\in \mathfrak{g}$. We denote by ${\rm ADer}
(\mathfrak{g})$ the space of all anti-derivations of
$\mathfrak{g}$.
\end{definition}

\begin{example} \exlabel{exemanti}
For a Lie algebra $\mathfrak{g}$ we have that ${\rm ADer}
(\mathfrak{g}) = {\rm Der} (\mathfrak{g})$ but in the case of
Leibniz algebras the two spaces are, in general, not equal. The
next example illustrates this. Let $\mathfrak{g}$ be the
$3$-dimensional Leibniz algebra with the basis $\{e_{1}, e_{2},
e_{3}\}$ and the bracket defined by: $ [e_{1}, \, e_{3}] = e_{2}$,
$[e_{3}, \, e_{3}] = e_{1}$. A straightforward computation shows
that the set ${\rm Der} (g)$ (resp. ${\rm ADer} (g)$) coincides
with the set of all arrays $\Delta$ (resp. $D$) of the form:
\begin{equation} \eqlabel{exnebu}
\Delta =
\left( \begin{array}{ccc} 2b_1 & 0 & b_2  \\
b_2 & 3b_1 & b_3 \\
0 & 0 & b_1  \\
\end{array}\right)
\qquad  ({\rm resp.} \quad  D =
\left( \begin{array}{ccc} 0 & 0 & d_1  \\
0 & 0 & d_2 \\
0 & 0 & d_3  \\
\end{array}\right)
\quad )
\end{equation}
for all $b_1$, $b_2$, $b_3$, $d_1$, $d_2$, $d_3 \in k$.
\end{example}

The following new concept will play an important role in the
construction of the two non-abelian cohomological objects
introduced in \seref{exemple} and \seref{cazurispeciale}.

\begin{definition} \delabel{dder}
Let $\mathfrak{g}$ be a Leibniz algebra. A \emph{pointed double
derivation} of $\mathfrak{g}$ is a triple $(g_{0}, \, D, \,
\Delta)$, where $g_{0} \in \mathfrak{g}$ and $D$, $\Delta:
\mathfrak{g} \to \mathfrak{g}$ are linear maps satisfying the
following compatibilities for any $g$, $h\in \mathfrak{g}$:
\begin{eqnarray}
&& D(g_0) = [g, \, g_0] = [g, D(h) + \Delta (h) ] =
D^2(g) + D \bigl( \Delta(g)\bigl) = 0 \eqlabel{flagper1} \\
&& D^2(g) + \Delta \bigl(D(g)\bigl) = [g_0, \, g] \eqlabel{flagper2} \\
&& \Delta \bigl([g, \, h]\bigl) = [\Delta(g), \, h] + [g, \, \Delta(h)] \eqlabel{flagper3} \\
&& D\bigl([g, \, h]\bigl) = [D(g), \, h] - [D(h), \, g]
\eqlabel{flagper4}
\end{eqnarray}
We denote by ${\mathcal D} (\mathfrak{g})$ the space of all
pointed double derivations.
\end{definition}
The compatibility conditions \equref{flagper3}-\equref{flagper4}
show that $\Delta$ (resp. $D$) is a derivation (resp. an
antiderivation) of $\mathfrak{g}$, hence ${\mathcal D} \,
(\mathfrak{g}) \subseteq \mathfrak{g} \times {\rm ADer}
(\mathfrak{g}) \times {\rm Der} (\mathfrak{g})$. If $\mathfrak{g}$
is a Lie algebra then we can easily see that the space ${\mathcal
D} (\mathfrak{g})$ coincides with the set of all pairs $(g_0, D)
\in Z(\mathfrak{g}) \times {\rm Der} (\mathfrak{g})$ such that
$D(g_0) = 0$. An example of computing the space ${\mathcal D}
(\mathfrak{g})$ is given in \exref{calexpext}.

Let $\mathfrak{g}$ be a Leibniz algebra, $E$ a vector space such
that $\mathfrak{g}$ is a subspace of $E$ and $V$ a complement of
$\mathfrak{g}$ in $E$, i.e. $V$ is a subspace of $E$ such that $E
= \mathfrak{g} + V$ and $V \cap \mathfrak{g} = 0$. For a linear
map $\varphi: E \to E$ we consider the diagram:
\begin{eqnarray} \eqlabel{diagrama}
\xymatrix {& \mathfrak{g} \ar[r]^{i} \ar[d]_{Id} & {E}
\ar[r]^{\pi} \ar[d]^{\varphi} & V \ar[d]^{Id}\\
& \mathfrak{g} \ar[r]^{i} & {E}\ar[r]^{\pi } & V}
\end{eqnarray}
where $\pi : E \to V$ is the canonical projection of $E =
\mathfrak{g} + V$ on $V$ and $i: \mathfrak{g} \to E$ is the
inclusion map. We say that $\varphi: E \to E$ \emph{stabilizes}
$\mathfrak{g}$ (resp. \emph{co-stabilizes} $V$) if the left square
(resp. the right square) of the diagram \equref{diagrama} is
commutative.

Two Leibniz algebra structures $\{-, \, -\}$ and $\{-, \, -\}'$ on
$E$ containing $\mathfrak{g}$ as a Leibniz subalgebra are called
\emph{equivalent} and we denote this by $(E, \{-, \, -\}) \equiv
(E, \{-, \, -\}')$, if there exists a Leibniz algebra isomorphism
$\varphi: (E, \{-, \, -\}) \to (E, \{-, \, -\}')$ which stabilizes
$\mathfrak{g}$. $\{-, \, -\}$ and $\{-, \, -\}'$ are called
\emph{cohomologous}, and we denote this by $(E, \{-, \, -\})
\approx (E, \{-, \, -\}')$, if there exists a Leibniz algebra
isomorphism $\varphi: (E, \{-, \, -\}) \to (E, \{-, \, -\}')$
which stabilizes $\mathfrak{g}$ and co-stabilizes $V$, i.e. the
diagram \equref{diagrama} is commutative.

$\equiv$ and $\approx$ are both equivalence relations on the set
of all Leibniz algebras structures on $E$ containing
$\mathfrak{g}$ as subalgebra and we denote by ${\rm ExtdL} \, (E,
\mathfrak{g})$ (resp. ${\rm ExtdL}' \, (E, \mathfrak{g})$) the set
of all equivalence classes via $\equiv$ (resp. $\approx$). ${\rm
ExtdL} \, (E, \mathfrak{g})$ is the classifying object of the ES
problem: by explicitly computing ${\rm ExtdL} \, (E,
\mathfrak{g})$ we obtain a parametrization of the set of all
isomorphism classes of Leibniz algebra structures on $E$ that
stabilize $\mathfrak{g}$. ${\rm ExtdL}' \, (E, \mathfrak{g})$
gives a classification of the ES problem from the point of view of
the extension problem. Any two cohomologous brackets on $E$ are of
course equivalent, hence there exists a canonical projection:
$$
{\rm ExtdL}' \,  (E, \mathfrak{g}) \twoheadrightarrow {\rm ExtdL}
\, (E, \mathfrak{g})
$$

For two sets $X$ and $Y$ we shall denote by $X \sqcup Y$ the
coproduct in the category of sets of $X$ and $Y$, i.e. $X \sqcup
Y$ is the disjoint union of $X$ and $Y$.

\section{Unified products for Leibniz algebras}\selabel{unifiedprod}
In this section we shall give the theoretical answer to the
ES-problem by constructing two cohomological type objects which
will parameterize ${\rm ExtdL} \, (E, \, \mathfrak{g})$ and ${\rm
ExtdL}' \, (E, \, \mathfrak{g})$. First we introduce the
following:

\begin{definition}\delabel{exdatum}
Let $\mathfrak{g}$ be a Leibniz algebra and $V$ a vector space. An
\textit{extending datum of $\mathfrak{g}$ through $V$} is a system
$\Omega(\mathfrak{g}, V) = \bigl(\triangleleft, \, \triangleright,
\, \leftharpoonup, \, \rightharpoonup, \, f, \, \{-,\,-\} \bigl)$
consisting of six bilinear maps
\begin{eqnarray*}
\triangleleft : V \times \mathfrak{g} \to V, \quad
&&\triangleright : V \times \mathfrak{g} \to \mathfrak{g}, \quad
\leftharpoonup \, : \mathfrak{g} \times V \to \mathfrak{g}, \quad
\rightharpoonup \, : \mathfrak{g} \times V \to V \\
&& f: V\times V \to \mathfrak{g}, \quad \{-,\, - \} \, : V\times V
\to V
\end{eqnarray*}
Let $\Omega(\mathfrak{g}, V) = \bigl(\triangleleft, \,
\triangleright, \, \leftharpoonup, \, \rightharpoonup, \, f, \,
\{-,\,-\} \bigl)$ be an extending datum. We denote by $
\mathfrak{g} \, \ltimes_{\Omega(\mathfrak{g}, V)} V = \mathfrak{g}
\, \ltimes V$ the vector space $\mathfrak{g} \, \times V$ together
with the bilinear map $[ -, \, -] : (\mathfrak{g} \times V) \times
(\mathfrak{g} \times V) \to \mathfrak{g} \times V$ defined by:
\begin{equation}\eqlabel{brackunif}
[(g, x), \, (h, y)] := \bigl( [g, \, h] + x \triangleright h + g
\leftharpoonup y + f(x, y), \,\, \{x, \, y \} + x\triangleleft h +
g \rightharpoonup y \bigl)
\end{equation}
for all $g$, $h \in \mathfrak{g}$ and $x$, $y \in V$. The object
$\mathfrak{g} \ltimes V$ is called the \textit{unified product} of
$\mathfrak{g}$ and $\Omega(\mathfrak{g}, V)$ if it is a Leibniz
algebra with the bracket given by \equref{brackunif}. In this case
the extending datum $\Omega(\mathfrak{g}, V) =
\bigl(\triangleleft, \, \triangleright, \, \leftharpoonup, \,
\rightharpoonup, \, f, \, \{-,\,-\} \bigl)$ is called a
\textit{Leibniz extending structure} of $\mathfrak{g}$ through
$V$. The maps $\triangleleft$, $\triangleright$, $\rightharpoonup$
and $\leftharpoonup$ are called the \textit{actions} of
$\Omega(\mathfrak{g}, V)$ and $f$ is called the \textit{cocycle}
of $\Omega(\mathfrak{g}, V)$.
\end{definition}

\begin{example}\exlabel{KW}
The unified product is a very general construction: in particular,
the \emph{hemisemidirect product} introduced in differential
geometry \cite[Example 2.2]{KW2000} is a special case of it. Let
$\mathfrak{g}$ be a Lie algebra and $(V, \triangleleft)$ be a
right $\mathfrak{g}$-module. Then $\mathfrak{g} \times V$ is a
Leibniz algebra with the bracket $[ (g, x), \, (h, y)] :=
\bigl([g, h], \, x \triangleleft h \bigl)$, for all $g$, $h\in
\mathfrak{g}$, $x$, $y\in V$ called the hemisemidirect product of
$\mathfrak{g}$ and $V$. This Leibniz algebra is not a Lie algebra
if $\mathfrak{g}$ acts nontrivially on $V$. The hemisemidirect
product is a special case of the unified product if we let
$\triangleright$, $\leftharpoonup$, $\rightharpoonup$, $f$ and
$\{-, - \}$ to be the trivial maps and $\mathfrak{g}$ to be a Lie
algebra.
\end{example}

Let $\Omega(\mathfrak{g}, V)$ be an extending datum of
$\mathfrak{g}$ through $V$. The bracket defined by
\equref{brackunif} has a rather complicated formula; however, for
some specific elements we obtain easier forms which will be very
useful for future computations. More precisely, the following
relations hold in $\mathfrak{g} \ltimes V$ for any $g$, $h \in
\mathfrak{g}$, $x$, $y \in V$:
\begin{eqnarray}
\left[(g, 0), \, (h, 0)\right] &=& \bigl(\left[g, \, h \right], \,
0 \bigl), \,\,\,\,\,\,\,\,\,\,\,\, \left[(g, 0), \, (0, y)\right]
= \bigl(g \leftharpoonup y, \, g \rightharpoonup y \bigl) \eqlabel{001}\\
\left[(0, x), \, (h, 0)\right] &=& \bigl( x \triangleright h, \, x
\triangleleft h \bigl), \,\,\,\ \left[(0, x), \, (0, y)\right] =
\bigl( f(x, y), \, \{x, y\} \bigl) \eqlabel{002}
\end{eqnarray}
The next theorem provides the set of axioms that need to be
fulfilled by an extending datum $\Omega(\mathfrak{g}, V)$ such
that $\mathfrak{g} \ltimes V$ is a unified product.

\begin{theorem}\thlabel{1}
Let $\mathfrak{g}$ be a Leibniz algebra, $V$ a vector space and
$\Omega(\mathfrak{g}, V)$ an extending datum of $\mathfrak{g}$ by
$V$. Then $\mathfrak{g} \ltimes V$ is a unified product if and
only if the following compatibility conditions hold for any $g$,
$h \in \mathfrak{g}$, $x$, $y$, $z \in V$:
\begin{enumerate}
\item[(L1)] $(V, \triangleleft)$ is a right $\mathfrak{g}$-module,
i.e. $x \lhd [g, \, h] = (x \lhd g) \lhd h - (x \lhd h) \lhd g$

\item[(L2)] $x \rhd [g, \, h] = [x \rhd g, \, h] - [x \rhd h, \,
g] + (x \lhd g) \rhd h - (x \lhd h) \rhd g$

\item[(L3)] $[g, \, h] \rightharpoonup x = g \rightharpoonup (h
\rightharpoonup x) + (g \rightharpoonup x) \lhd h$

\item[(L4)] $[g, \, h] \leftharpoonup x = [g, \, h \leftharpoonup
x] + [g \leftharpoonup x, \, h] + g \leftharpoonup (h
\rightharpoonup x) + (g \rightharpoonup x) \rhd h$

\item[(L5)] $x \rhd f(y, \, z) = f(x, \, y) \leftharpoonup z \, -
f(x, \, z) \leftharpoonup y + f(\{x, \, y\}, \, z) - f(\{x, \,
z\}, \, y) - f(x, \, \{y, \, z\})$

\item[(L6)] $x \lhd f(y, \, z) = f(x, \, y) \rightharpoonup z -
f(x, \, z) \rightharpoonup y + \{\{x, \, y\}, \, z\} - \{\{x, \,
z\}, \, y\} - \{x, \, \{y, \, z\}\}$

\item[(L7)] $\{x, \, y\} \rhd g = x \rhd (y \rhd g) + (x \rhd g)
\leftharpoonup y + f(x, \, y \lhd g) + f(x \lhd g, \, y) - [f(x,
\, y), \, g]$

\item[(L8)] $\{x, \, y\} \lhd g = x \lhd (y \rhd g) + (x \rhd g)
\rightharpoonup y + \{x, \, y \lhd g\} + \{x \lhd g, \, y\}$

\item[(L9)] $g \rightharpoonup \{x,\, y \} = (g \leftharpoonup x)
\rightharpoonup y \, - (g \leftharpoonup y) \rightharpoonup x +
\{g \rightharpoonup x, \, y\} - \{g \rightharpoonup y, \, x\}$

\item[(L10)] $g \leftharpoonup \{x, \, y\} = (g \leftharpoonup x)
\leftharpoonup y - (g \leftharpoonup y) \leftharpoonup x + f(g
\rightharpoonup x, \, y) - f(g \rightharpoonup y, \, x) - [g, \,
f(x, y)]$

\item[(L11)] $[g, \, h \leftharpoonup x] + [g, \, x \rhd h] + g
\leftharpoonup (h \rightharpoonup x) + g \leftharpoonup (x \lhd h)
= 0$

\item[(L12)] $x \rhd (y \rhd g) + x \rhd (g \leftharpoonup y) +
f(x, \, y \lhd g) + f(x, \, g \rightharpoonup y) = 0$

\item[(L13)] $x \lhd (y \rhd g) + x \lhd (g \leftharpoonup y) +
\{x, \, y \lhd g\} + \{x, \, g \rightharpoonup y\} = 0$

\item[(L14)] $g \rightharpoonup (h \rightharpoonup x) + g
\rightharpoonup (x \lhd h) = 0$
\end{enumerate}
\end{theorem}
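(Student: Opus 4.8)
The plan is to exploit that the bracket \equref{brackunif} is bilinear by construction, so that $\mathfrak{g}\ltimes V$ is a Leibniz algebra if and only if the map
\[
T\bigl((g,x),(h,y),(l,z)\bigr) := [(g,x),[(h,y),(l,z)]] - [[(g,x),(h,y)],(l,z)] + [[(g,x),(l,z)],(h,y)]
\]
vanishes identically on $\mathfrak{g}\ltimes V=\mathfrak{g}\times V$. This $T$ is linear in each of its three arguments, and $\mathfrak{g}\times V$ is spanned by the elements $(g,0)$, $g\in\mathfrak{g}$, together with the elements $(0,x)$, $x\in V$; hence by trilinearity it suffices to check $T=0$ on the $2^{3}=8$ triples whose entries are each of one of these two forms.

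The triple $((g,0),(h,0),(l,0))$ contributes nothing: by \equref{001} its first component reduces to the Leibniz identity \equref{lz} for $\mathfrak{g}$, and its second component is the trivial identity $0=0$. For each of the remaining seven triples I would expand $T$ using the simplified formulas \equref{001}--\equref{002}, together with the full formula \equref{brackunif} at those steps where an inner bracket produces a non-pure element of $\mathfrak{g}\times V$ — for instance $[(h,0),(0,z)]=(h\leftharpoonup z,\,h\rightharpoonup z)$ — and then separate the identity $T=0$ into its $\mathfrak{g}$-component and its $V$-component. This yields $7\times 2 = 14$ scalar identities, which I claim are exactly (L1)--(L14).

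The matching goes as follows. The triple $((0,x),(g,0),(h,0))$ gives (L1) in the $V$-component and (L2) in the $\mathfrak{g}$-component; $((g,0),(h,0),(0,z))$ gives (L3) and (L4); $((0,x),(0,y),(g,0))$ gives (L7) and (L8); $((g,0),(0,x),(0,y))$ gives (L9) and (L10); $((0,x),(0,y),(0,z))$ gives (L5) and (L6). The two remaining ''mixed'' triples $((g,0),(0,x),(h,0))$ and $((0,x),(g,0),(0,z))$ are the only ones whose raw component identities are not literally items on the list: after subtracting from them the identities already obtained — namely (L3) and (L4) in the first case, and (L7) and (L8) in the second — they collapse precisely to (L11) and (L14), respectively (L12) and (L13). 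Since every manipulation used is reversible, running the same case analysis backwards shows that (L1)--(L14) force $T$ to vanish on all eight pure triples, hence on all of $\mathfrak{g}\ltimes V$; this gives the converse implication.

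The computation is long but entirely mechanical — repeated substitution into \equref{brackunif} followed by bilinearity. The one place that genuinely calls for care is the two mixed triples: one must remember to reduce their raw component identities modulo the conditions already extracted from the unmixed triples, for otherwise (L11)--(L14) will not appear in the compact form stated in the theorem. That bookkeeping, rather than any single calculation, is where the argument is most easily mishandled, so I would carry it out explicitly and flag it.
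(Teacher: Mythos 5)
Your proposal is correct and follows essentially the same route as the paper: reduce the Leibniz identity to the eight pure triples by trilinearity, split each resulting identity into its $\mathfrak{g}$- and $V$-components, and in particular recognize that the two mixed triples $((g,0),(0,x),(h,0))$ and $((0,x),(g,0),(0,y))$ must be simplified modulo the identities already extracted from $((g,0),(h,0),(0,x))$ and $((0,x),(0,y),(g,0))$ before they collapse to (L11), (L14) and (L12), (L13). The case-by-case matching of triples to axioms agrees exactly with the paper's, so nothing further is needed.
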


\begin{proof} The proof relies on a detailed analysis of the Leibniz
identity for the bracket given by \equref{brackunif}, similar to
the one provided in the proof of \cite[Theorem 2.2]{am-2013}
corresponding to the Lie algebra case. There are, however, two
significant differences which require some extra care, namely: the
bracket on the Leibniz algebra $\mathfrak{g}$ is not
anti-symmetric and the Leibniz identity is not invariant under
circular permutations. As the computations are rather long but
straightforward we will only indicate the essential steps of the
proof, the details being left to the reader. To start with, we
note that $\mathfrak{g} \ltimes V$ is a Leibniz algebra if and
only if Leibniz's identity holds, i.e.:
\begin{equation}\eqlabel{005}
\bigl[(g, x), \, [(h, y), \, (l, z)]\bigl] = \bigl[ \, [(g, x), \,
(h, y)], \, (l, z) \bigl] - \bigl[ \, [(g, x), \, (l, z)], \, (h,
y) \bigl]
\end{equation}
for all $g$, $h$, $l \in \mathfrak{g}$ and $x$, $y$, $z \in V$.
Since in $\mathfrak{g} \ltimes V$ we have $(g, x) = (g, 0) + (0,
x)$ it follows that \equref{005} holds if and only if it holds for
all generators of $\mathfrak{g} \ltimes V$, i.e. the set $\{(g, \,
0) ~|~ g \in \mathfrak{g}\} \cup \{(0, \, x) ~|~ x \in V\}$. Hence
we are left to deal with eight cases which are necessary and
sufficient for testing the compatibility condition \equref{005}.
First, we should notice that \equref{005} holds for the triple
$(g, 0)$, $(h, 0)$, $(l, 0)$, since in $\mathfrak{g} \ltimes V$ we
have that $[(g, 0), \, (h, 0)] = ([g,h], 0)$. Now, taking into
account \equref{001}, we obtain that \equref{005} holds for $(g,
0)$, $(h, 0)$, $(0, x)$ if and only if
\begin{eqnarray*}
\bigl( [g, \, h] \leftharpoonup x - [g \leftharpoonup x, \, h] -
(g \rightharpoonup x) \rhd h, \, [g, \, h] \rightharpoonup x - (g
\rightharpoonup x) \lhd h \bigl) = \\ \bigl( [g, \, h
\leftharpoonup x] + g \leftharpoonup (h \rightharpoonup x), \, g
\rightharpoonup (h \rightharpoonup x)\bigl)
\end{eqnarray*}
i.e. if and only if $(L3)$ and $(L4)$ hold. A similar computation
proves that the compatibility condition \equref{005} is fulfilled
for the triple $(g, 0)$, $(0, x)$, $(h, 0)$ if and only if:
\begin{eqnarray*}
\left[g, \, h \right] \leftharpoonup x &=& [g \leftharpoonup x, \,
h] - [g,\, x \rhd h] + (g \rightharpoonup x) \rhd h - g
\leftharpoonup (x
\lhd h)\\
\left[g, \, h \right] \rightharpoonup x &=& (g \rightharpoonup x)
\lhd h - g \rightharpoonup (x \lhd h)
\end{eqnarray*}
Taking into account that we are looking for a minimal and
independent set of axioms we obtain, assuming that $(L3)$ and
$(L4)$ hold, that \equref{005} is fulfilled for the triple $(g,
0)$, $(0, x)$, $(h, 0)$ if and only if $(L11)$ and $(L14)$ hold.
Next, it is straightforward to see that \equref{005} holds for
$(g, 0)$, $(0, x)$, $(0, y)$ if and only if $(L9)$ and $(L10)$
hold. In a similar manner, one can show that \equref{005} holds
for $(0, x)$, $(0, y)$, $(0, z)$ if and only if axiom $(L5)$ and
$(L6)$ are fulfilled. We are left with three more cases to study.
First, observe that \equref{005} holds for $(0, x)$, $(g, 0)$,
$(h, 0)$ if and only if
$$
\bigl( x \rhd [g, \,h],\, x \lhd [g, \, h] \bigl) = \bigl( [x \rhd
g, \, h] - [x \rhd h, \, g] + (x \lhd g) \rhd h - (x \lhd h) \rhd
g, \, (x \lhd g) \lhd h - (x \lhd h) \lhd g \bigl)
$$
which is equivalent to the fact that axioms $(L1)$ and $(L2)$
hold. Analogously, we can show that \equref{005} holds for $(0,
x)$, $(0, y)$, $(g, 0)$ if and only if $(L7)$ and $(L8)$ hold.
Finally, it is straightforward to see that \equref{005} holds for
$(0, x)$, $(g, 0)$, $(0, y)$ if and only if the following two
compatibilities are fulfilled:
\begin{eqnarray*}
\{x, \, y\} \rhd g &=& (x \rhd g) \leftharpoonup y + f(x \lhd g,
\, y) - [f(x, \, y), \, g] - x \rhd (g \leftharpoonup y) - f(x, \,
g \rightharpoonup y)\\
\{x, \, y\} \lhd g &=& \{x \lhd g, \, y\} + (x \rhd g)
\rightharpoonup y - x \lhd (g \leftharpoonup y) - \{x, \, g
\rightharpoonup y\}
\end{eqnarray*}
Since we are looking for a minimal set of axioms we obtain,
assuming that $(L7)$ and $(L8)$ hold, that \equref{005} is
fulfilled for the triple $(0, x)$, $(g, 0)$, $(0, y)$ if and only
if $(L12)$ and $(L13)$ hold. The proof is now finished.
\end{proof}

From now on, a Leibniz extending structure of $\mathfrak{g}$
through $V$ will be viewed as a system $\Omega(\mathfrak{g}, V) =
\bigl(\triangleleft, \, \triangleright, \, \leftharpoonup, \,
\rightharpoonup, \, f, \, \{-,\, -\} \bigl)$ satisfying the
compatibility conditions $(L1)-(L14)$. We denote by ${\mathcal L}
{\mathcal Z} (\mathfrak{g}, V)$ the set of all Leibniz extending
structures of $\mathfrak{g}$ through $V$.

\begin{example}\exlabel{twistedproduct}
We provide the first example of a Leibniz extending structure and
the corresponding unified product. More examples will be given in
\seref{exemple} and \seref{cazurispeciale}.

Let $\Omega(\mathfrak{g}, V) = \bigl(\triangleleft, \,
\triangleright, \, \leftharpoonup, \, \rightharpoonup, \, f, \{-,
\, -\} \bigl)$ be an extending datum of a Leibniz algebra
$\mathfrak{g}$ through a vector space $V$ such that its actions
are all the trivial maps, i.e. $x\triangleleft g = x
\triangleright g = g \rightharpoonup x = g \leftharpoonup x = 0$,
for all $x \in V$ and $g \in \mathfrak{g}$. Then,
$\Omega(\mathfrak{g}, V) = \bigl(f, \{-, \, -\} \bigl)$ is a
Leibniz extending structure of $\mathfrak{g}$ through $V$ if and
only if $(V, \{-, -\})$ is a Leibniz algebra and $f: V \times V
\to \mathfrak{g}$ is an abelian $2$-cocycle, that is
\begin{equation}\eqlabel{2cociclucltw}
[g, \, f(x, y)] = [f(x, y), \, g] = 0, \qquad f\bigl(x, \, \{y,\,
z \}\bigl) - f\bigl(\{x,\, y\}, \, z \bigl) + f\bigl(\{x, \, z \},
\,  y \bigl) = 0
\end{equation}
for all $g \in \mathfrak{g}$, $x$, $y$ and $z \in V$. The first
part of \equref{2cociclucltw} shows that the image of $f$ is
contained in the center of $\mathfrak{g}$, while the second part
is a $2$-cocycle condition (see \cite[Section 1.7]{LoP}) which
follows from the axiom $(L5)$. In this case, the associated
unified product $\mathfrak{g} \ltimes_{\Omega(\mathfrak{g}, V)} V$
will be called the \emph{twisted product} of the Leibniz algebras
$\mathfrak{g}$ and $V$.
\end{example}

Let $\Omega(\mathfrak{g}, V) = \bigl(\triangleleft, \,
\triangleright, \, \leftharpoonup, \, \rightharpoonup, \, f, \,
\cdot \bigl) \, \in {\mathcal L} {\mathcal Z} (\mathfrak{g}, V)$
be a Leibniz extending structure and $\mathfrak{g} \ltimes V$ the
associated unified product. Then the canonical inclusion
$$
i_{\mathfrak{g}}: \mathfrak{g} \to \mathfrak{g} \ltimes V, \qquad
i_{\mathfrak{g}}(g) = (g, \, 0)
$$
is an injective Leibniz algebra map. Therefore, we can see
$\mathfrak{g}$ as a subalgebra of $\mathfrak{g} \ltimes V$ through
the identification $\mathfrak{g} \cong
i_{\mathfrak{g}}(\mathfrak{g}) \cong \mathfrak{g} \times \{0\}$.
Conversely, we have the following result which provides an answer
to the description part of the ES problem:

\begin{theorem}\thlabel{classif}
Let $\mathfrak{g}$ be a Leibniz algebra, $E$ a vector space
containing $\mathfrak{g}$ as a subspace and $[-, \,-]$ a Leibniz
algebra structure on $E$ such that $\mathfrak{g}$ is a Leibniz
subalgebra in $(E, [-, \,-])$. Then there exists a Leibniz
extending structure $\Omega(\mathfrak{g}, V) =
\bigl(\triangleleft, \, \triangleright, \, \leftharpoonup, \,
\rightharpoonup, \, f, \,  [-, -] \bigl)$ of $\mathfrak{g}$
through a subspace $V$ of $E$ and an isomorphism of Leibniz
algebras $(E, [-, \,-]) \cong \mathfrak{g} \ltimes V$ that
stabilizes $\mathfrak{g}$ and co-stabilizes $V$.
\end{theorem}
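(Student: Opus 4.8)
The plan is to reconstruct the Leibniz extending structure directly from the given bracket on $E$ by choosing a linear retraction and transporting the bracket to $\mathfrak{g} \ltimes V$. First I would fix a complement $V$ of $\mathfrak{g}$ in $E$, so that $E = \mathfrak{g} \oplus V$ as vector spaces, and let $p : E \to \mathfrak{g}$ and $q : E \to V$ be the associated projections. Since $\mathfrak{g}$ is a Leibniz subalgebra, for $g$, $h \in \mathfrak{g}$ we have $[g, h] \in \mathfrak{g}$, which will match the first component of \equref{brackunif}. I would then define the six bilinear maps by decomposing the mixed brackets in $E$ into their $\mathfrak{g}$- and $V$-components: for $x$, $y \in V$ and $g \in \mathfrak{g}$ set $x \triangleright g := p([x,g])$, $x \triangleleft g := q([x,g])$, $g \leftharpoonup x := p([g,x])$, $g \rightharpoonup x := q([g,x])$, $f(x,y) := p([x,y])$ and $\{x,y\} := q([x,y])$. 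By construction these are well-defined bilinear maps of the required signatures, and the cocycle/bracket pair on $V$ is exactly $(f, \{-,-\})$.

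Next I would verify that the linear isomorphism $\varphi : \mathfrak{g} \ltimes V \to E$, $\varphi(g, x) := g + x$ (with inverse $e \mapsto (p(e), q(e))$), carries the bracket \equref{brackunif} to the bracket of $E$. This is a direct computation: expanding $[g + x,\, h + y]$ in $E$ by bilinearity into the four pieces $[g,h]$, $[g,y]$, $[x,h]$, $[x,y]$ and applying $p$ and $q$ to each, one reads off precisely the two components of \equref{brackunif} by the definitions above. Hence $\varphi$ is an isomorphism of the underlying magmas; since $(E, [-,-])$ is a Leibniz algebra and $\varphi$ is a linear bijection intertwining the brackets, $\mathfrak{g} \ltimes V$ is a Leibniz algebra as well, i.e. the bracket \equref{brackunif} satisfies the Leibniz identity, and therefore by \thref{1} the extending datum $\Omega(\mathfrak{g}, V) = \bigl(\triangleleft, \triangleright, \leftharpoonup, \rightharpoonup, f, \{-,-\}\bigr)$ automatically satisfies the compatibility conditions $(L1)$–$(L14)$ and is a genuine Leibniz extending structure. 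Finally, $\varphi$ restricted to $\mathfrak{g} \times \{0\}$ is the inclusion $g \mapsto g$ and $\pi \circ \varphi = q \circ \varphi = \mathrm{pr}_V$ on $\{0\} \times V$, so $\varphi$ stabilizes $\mathfrak{g}$ and co-stabilizes $V$ in the sense of diagram \equref{diagrama}.

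The only genuinely delicate point is the bookkeeping in the transport-of-structure step: one must be careful that the definitions of $\triangleright, \triangleleft, \leftharpoonup, \rightharpoonup, f, \{-,-\}$ are assigned to the correct components and correct argument order, since—as the proof of \thref{1} stresses—the Leibniz bracket is not anti-symmetric, so $[x,g]$ and $[g,x]$ genuinely give different data. Once the dictionary between the two brackets is written out term by term, the identity $\varphi([\,(g,x),(h,y)\,]) = [\varphi(g,x), \varphi(h,y)]$ is immediate, and everything else (bijectivity of $\varphi$, the two normalization conditions from \equref{diagrama}, and the deduction that $\Omega(\mathfrak{g},V) \in {\mathcal L}{\mathcal Z}(\mathfrak{g}, V)$ via \thref{1}) follows formally. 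No separate verification of $(L1)$–$(L14)$ is needed; it is subsumed by the fact that $E$ was assumed to be a Leibniz algebra together with \thref{1}.
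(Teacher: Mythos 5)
Your proposal is correct and follows essentially the same route as the paper: choose a retraction/complement, define the six structure maps as the $\mathfrak{g}$- and $V$-components of the mixed brackets, transport the bracket of $E$ along $\varphi(g,x)=g+x$, and invoke \thref{1} to conclude that the resulting extending datum satisfies $(L1)$--$(L14)$ without checking them directly. The only cosmetic difference is that the paper starts from the retraction $p$ and sets $V=\Ker(p)$, whereas you start from a chosen complement $V$ and recover $p$ and $q$ as the associated projections; these are equivalent since $q = \mathrm{Id}_E - p$.
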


\begin{proof}
Since $k$ is a field, there exists a linear map $p: E \to
\mathfrak{g}$ such that $p(g) = g$, for all $g \in \mathfrak{g}$.
Then $V := \rm{Ker}$$(p)$ is a complement of $\mathfrak{g}$ in
$E$. We define the extending datum $\Omega(\mathfrak{g}, V) =
\bigl(\triangleleft = \triangleleft_p , \, \triangleright =
\triangleright_p , \, \leftharpoonup = \leftharpoonup_p , \,
\rightharpoonup = \rightharpoonup_p, \, f = f_p, \, [-, -] = [-,
-]_p \bigl)$ of $\mathfrak{g}$ through $V$ by the following
formulas:
\begin{eqnarray*}
&& \triangleright : V \times \mathfrak{g} \to \mathfrak{g}, \quad
\,\,\, x \triangleright g = p \bigl([x, \,g]\bigl), \qquad
\triangleleft: V \times \mathfrak{g} \to V,
\quad \,\, x \triangleleft g = [x, \, g] - p \bigl([x, \, g]\bigl)\\
&& \leftharpoonup \, : \mathfrak{g} \times V \to \mathfrak{g},
\quad g \leftharpoonup x = p\bigl([g, \, x]\bigl), \quad
\rightharpoonup \, : \mathfrak{g} \times V \to V, \quad g
\rightharpoonup x = [g, \,
x] - p\bigl([g, \, x]\bigl)\\
&& f: V \times V \to \mathfrak{g}, \,\,\, f(x, y) = p \bigl([x, \,
y]\bigl), \quad  \{\, , \, \}: V \times V \to V, \,\, \{x, y\} =
[x, \, y] - p \bigl([x, \, y]\bigl)
\end{eqnarray*}
for all $g \in \mathfrak{g}$, $x$, $y\in V$. Now, the map
$\varphi: \mathfrak{g} \times V \to E$, $\varphi(g, x) := g+x$, is
a linear isomorphism between the direct product of vector spaces
$\mathfrak{g} \times V$ and the Leibniz algebra $(E, [-, -])$ with
the inverse given by $\varphi^{-1}(y) := \bigl(p(y), \, y -
p(y)\bigl)$, for all $y \in E$. Hence, there exists a unique
Leibniz algebra structure on $\mathfrak{g} \times V$ such that
$\varphi$ is an isomorphism of Leibniz algebras and this unique
bracket is given by:
$$
[(g, x), \, (h, y)] := \varphi^{-1} \bigl([\varphi(g, x), \,
\varphi(h, y)]\bigl)
$$
for all $g$, $h \in \mathfrak{g}$ and $x$, $y\in V$. Using
\thref{1}, the proof will be finished if we prove that this
bracket is the one defined by \equref{brackunif} associated to the
system $\bigl(\triangleleft_p, \, \triangleright_p, \,
\leftharpoonup_{p}, \, \rightharpoonup_{p}, \, f_p, \{-, \, -\}_p
\bigl)$ constructed above. Indeed, for any $g$, $h \in
\mathfrak{g}$, $x$, $y\in V$ we have:
\begin{eqnarray*}
[(g, x), \, (h, y)] &=& \varphi^{-1} \bigl([\varphi(g, x), \,
\varphi(h, y)]\bigl) \, = \varphi^{-1} \bigl( [ g+ x, \, h + y ]
\bigl) \\
&=& \varphi^{-1} \bigl( [g, h] + [g, y] + [x, h] + [x, y] \bigl) \\
&=& \Bigl( [g, h] + p ( [g, y]) + p( [x, h] ) + p( [x, y]), \\
&& [g, y] + [x, h] + [x, y] - p([g, y]) - p([x, h]) - p([x, y]) \Bigl) \\
&=& \bigl( [g, \, h] + x \triangleright h + g \leftharpoonup y +
f(x, y), \,\, \{x, \, y \} + x\triangleleft h + g \rightharpoonup
y \bigl)
\end{eqnarray*}
as needed. Thus, $\varphi: \mathfrak{g} \ltimes V \to E$ is an
isomorphism of Leibniz algebras and the following diagram is
commutative
\begin{eqnarray*}
\xymatrix {& \mathfrak{g} \ar[r]^{i} \ar[d]_{Id} & {\mathfrak{g}
\ltimes V} \ar[r]^{q} \ar[d]^{\varphi} & V \ar[d]^{Id}\\
& \mathfrak{g} \ar[r]^{i} & {E}\ar[r]^{\pi } & V}
\end{eqnarray*}
where $\pi : E \to V$ is the projection of $E = A + V$ on the
vector space $V$ and $q: A \ltimes V \to V$, $q (g, x) := x$ is
the canonical projection.
\end{proof}

Based on \thref{classif}, the classification of all Leibniz
algebra structures on $E$ that contain $\mathfrak{g}$ as a Leibniz
subalgebra reduces to the classification of all unified products
$\mathfrak{g} \ltimes V$, associated to all Leibniz extending
structures $\Omega(\mathfrak{g}, V) = \bigl(\triangleleft, \,
\triangleright, \leftharpoonup, \, \rightharpoonup, \, f, \{-, \,
-\} \bigl)$, for a given complement $V$ of $\mathfrak{g}$ in $E$.
In order to construct the cohomological objects ${\mathcal
H}{\mathcal L}^{2}_{\mathfrak{g}} \, (V, \, \mathfrak{g} )$ and
${\mathcal H}{\mathcal L}^{2} \, (V, \, \mathfrak{g} )$ which will
parameterize the classifying sets ${\rm ExtdL} \, (E,
\mathfrak{g})$ and respectively ${\rm ExtdL}' \, (E,
\mathfrak{g})$ we need the following technical result:

\begin{lemma} \lelabel{morfismunif}
Let $\Omega(\mathfrak{g}, V) = \bigl(\triangleleft, \,
\triangleright, \leftharpoonup, \, \rightharpoonup, \, f, \{-, \,
-\} \bigl)$ and $\Omega'(\mathfrak{g}, V) = \bigl(\triangleleft ',
\, \triangleright ', \leftharpoonup ', \, \rightharpoonup ', \,
f', \{-, \, -\}' \bigl)$ be two Leibniz extending structures of
$\mathfrak{g}$ trough $V$ and $ \mathfrak{g} \ltimes V$, $
\mathfrak{g} \ltimes ' V$ the associated unified products. Then
there exists a bijection between the set of all morphisms of
Leibniz algebras $\psi: \mathfrak{g} \ltimes V \to \mathfrak{g}
\ltimes ' V$ which stabilizes $\mathfrak{g}$ and the set of pairs
$(r, v)$, where $r: V \to \mathfrak{g}$, $v: V \to V$ are two
linear maps satisfying the following compatibility conditions for
any $g \in \mathfrak{g}$, $x$, $y \in V$:
\begin{enumerate}
\item[(ML1)] $v(g \rightharpoonup x) = g \rightharpoonup ' v(x)$;
\item[(ML2)] $v(x \triangleleft g) = v(x) \triangleleft ' g$;
\item[(ML3)] $x \triangleright g + r(x \triangleleft g) = [r(x),
\, g] + v(x) \triangleright ' g$; \item[(ML4)] $g \leftharpoonup x
+ r(g \rightharpoonup x) = [g, \, r(x)] + g \leftharpoonup '
v(x)$; \item[(ML5)] $v(\{x, \, y\}) = r(x) \rightharpoonup ' v(y)
+ v(x) \triangleleft ' r(y) + \{v(x), \, v(y)\}'$; \item[(ML6)]
$f(x, \, y) + r(\{x, \, y\}) = [r(x), \, r(y)] + r(x)
\leftharpoonup ' v(y) + v(x) \triangleright ' r(y) + f' (v(x), \,
v(y))$
\end{enumerate}
Under the above bijection the morphism of Leibniz algebras $\psi =
\psi_{(r, v)}: \mathfrak{g} \ltimes V \to \mathfrak{g} \ltimes '
V$ corresponding to $(r, v)$ is given for any $g \in \mathfrak{g}$
and $x \in V$ by:
$$
\psi(g, x) = (g + r(x), v(x))
$$
Moreover, $\psi = \psi_{(r, v)}$ is an isomorphism if and only if
$v: V \to V$ is an isomorphism and $\psi = \psi_{(r, v)}$
co-stabilizes $V$ if and only if $v = {\rm Id}_V$.
\end{lemma}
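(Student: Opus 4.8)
The plan is to first reduce the statement to a straightforward bookkeeping exercise. Any linear map $\psi:\mathfrak{g}\ltimes V\to\mathfrak{g}\ltimes' V$ stabilizes $\mathfrak{g}$ precisely when $\psi(g,0)=(g,0)$ for all $g\in\mathfrak{g}$; writing $(g,x)=(g,0)+(0,x)$ and setting $\psi(0,x)=(r(x),v(x))$ with $r:V\to\mathfrak{g}$ and $v:V\to V$ linear, linearity of $\psi$ forces $\psi(g,x)=(g+r(x),v(x))$. Conversely, every pair $(r,v)$ of linear maps defines a linear map of exactly this shape which fixes $\mathfrak{g}\times\{0\}$ pointwise. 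So $\psi\mapsto(r,v)$ is already a bijection between linear maps stabilizing $\mathfrak{g}$ and pairs $(r,v)$, and it only remains to decide which pairs give a morphism of Leibniz algebras.

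Next I would test the morphism identity $\psi([a,b])=[\psi(a),\psi(b)]'$. Since both sides are bilinear in $a$ and $b$, and $\mathfrak{g}\ltimes V$ is spanned by $\{(g,0)\mid g\in\mathfrak{g}\}\cup\{(0,x)\mid x\in V\}$, it suffices to check it on the four ordered pairs of generators. The pair $\bigl((g,0),(h,0)\bigr)$ is automatic, both sides being $([g,h],0)$. For the other three pairs I would expand the left-hand side using \equref{001}--\equref{002} and the right-hand side using \equref{brackunif} for $\Omega'$, together with $\psi(g,0)=(g,0)$, $\psi(0,x)=(r(x),v(x))$, and the vanishing of the trivial terms $0\triangleright' g$, $0\triangleleft' g$, $f'(0,y)$, $\{0,y\}'$. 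Matching the two components then yields: $\bigl((g,0),(0,y)\bigr)$ produces (ML1) and (ML4); $\bigl((0,x),(h,0)\bigr)$ produces (ML2) and (ML3); and $\bigl((0,x),(0,y)\bigr)$ produces (ML5) and (ML6). Hence $\psi=\psi_{(r,v)}$ is a Leibniz algebra morphism if and only if (ML1)--(ML6) hold, which gives the claimed bijection.

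For the two final assertions: in block form with respect to $\mathfrak{g}\oplus V$, $\psi_{(r,v)}$ is the upper triangular map $\left(\begin{smallmatrix}{\rm Id}&r\\0&v\end{smallmatrix}\right)$, so it is bijective if and only if $v$ is, in which case its inverse is again of this shape, corresponding to the pair $(-r\circ v^{-1},\,v^{-1})$, and is automatically a Leibniz algebra morphism; thus $\psi$ is an isomorphism iff $v$ is. Co-stabilizing $V$ means $q'\circ\psi=q$ for the canonical projections $q,q'$ onto $V$; since $q'\bigl(\psi(g,x)\bigr)=v(x)$ and $q(g,x)=x$, this holds iff $v={\rm Id}_V$. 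The only real work is organizing the three generator computations; the one point requiring care is that the bracket of $\mathfrak{g}$ is not anti-symmetric, so ordered pairs of generators cannot be collapsed and one must keep the arguments of every map in their correct slots throughout.
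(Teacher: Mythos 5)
Your proposal is correct and follows essentially the same route as the paper's proof: reduce to the form $\psi(g,x)=(g+r(x),v(x))$, test the morphism identity on the four ordered pairs of generators (obtaining exactly the same matching of pairs to axioms (ML1)--(ML6)), and observe that bijectivity of $\psi$ is equivalent to bijectivity of $v$, with the inverse given by the pair $(-r\circ v^{-1},v^{-1})$. The only cosmetic difference is that you phrase the invertibility step via the upper-triangular block form, whereas the paper argues injectivity of $v$ directly from $\psi(-r(x),x)=(0,v(x))$; both are fine.
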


\begin{proof}
A linear map $\psi: \mathfrak{g} \ltimes V \to \mathfrak{g}
\ltimes ' V$ which stabilizes $\mathfrak{g}$ is uniquely
determined by two linear maps $r: V \to \mathfrak{g}$, $v: V \to
V$ such that $\psi(g, x) = (g + r(x), v(x))$, for all $g \in
\mathfrak{g}$, and $x \in V$. Indeed, by denoting $\psi(0, x) =
(r(x), v(x)) \in \mathfrak{g} \times V$ for all $x \in V$, we
obtain:
\begin{eqnarray*}
\psi(g, x) &=& \psi \bigl((g, 0) + \psi(0, x)\bigl) = \psi(g, 0) +
\psi(0, x) = \bigl(g + r(x), v(x) \bigl)
\end{eqnarray*}
Let $\psi = \psi_{(r, v)}$ be such a linear map, i.e. $\psi(g, x)
= (g + r(x), v(x))$, for some linear maps $r: V \to \mathfrak{g}$,
$v: V \to V$. We will prove that $\psi$ is a morphism of Leibniz
algebras if and only if the compatibility conditions $(ML1)-(ML6)$
hold. It is enough to prove that the compatibility
\begin{equation}\eqlabel{Liemap}
\psi \bigl([(g, x), \, (h, y)] \bigl) = [\psi(g, x), \, \psi(h,
y)]
\end{equation}
holds for all generators of $\mathfrak{g} \ltimes V$. First of
all, it is easy to see that \equref{Liemap} holds for the pair
$(g, 0)$, $(h, 0)$, for all $g$, $h \in \mathfrak{g}$. Now we
prove that \equref{Liemap} holds for the pair $(g, 0)$, $(0, x)$
if and only if $(ML1)$ and $(ML4)$ hold. Indeed, $\psi \bigl([(g,
0), \, (0, x)] \bigl) = [\psi(g, 0), \, \psi(0, x)]$ it is
equivalent to $\psi(g \leftharpoonup x, \, g \rightharpoonup x) =
[(g, 0), \, (r(x), v(x))]$ and hence to $(g \leftharpoonup x + r(g
\rightharpoonup x), \, v(g \rightharpoonup x)) = ([g, \, r(x)]+ g
\leftharpoonup ' v(x), \, g \rightharpoonup ' v(x))$, i.e. to the
fact that $(ML1)$ and $(ML4)$ hold.

Next we prove that \equref{Liemap} holds for the pair $(0, x)$,
$(g, 0)$ if and only if $(ML2)$ and $(ML3)$ hold. Indeed, $\psi
\bigl([(0, x), \, (g, 0)] \bigl) = [\psi(0, x), \, \psi(g, 0)]$ it
is equivalent to $\psi(x \triangleright g, \, x \triangleleft g) =
[(r(x), v(x)), \, (g, 0)]$ and therefore to $(x \triangleright g +
r(x \triangleleft g), \, v(x \triangleleft g)) = ([r(x),\, g] +
v(x) \triangleright  ' g, \, v(x) \triangleleft ' g)$, i.e. to the
fact that $(ML2)$ and $(ML3)$ hold.

To this end, we prove that \equref{Liemap} holds for the pair $(0,
x)$, $(0, y)$ if and only if $(ML5)$ and $(ML6)$ hold. Indeed,
$\psi \bigl([(0, x), \, (0, y)] \bigl) = [\psi(0, x), \, \psi(0,
y)]$ it is equivalent to $\psi(f(x, y),\\ \{x, y\}) = [(r(x),
v(x)), \, (r(y), v(y))]$; thus it is equivalent to: $\bigl(f(x, y)
+ r(\{x, y\}), v(\{x, y\})\bigl)\\ = \bigl([r(x), \, r(y)] + r(x)
\leftharpoonup ' v(y) + v(x) \triangleright ' r(y) + f'(v(x),
v(y)),\, r(x) \rightharpoonup ' v(y) + v(x) \triangleleft ' r(y) +
 \{v(x), v(y)\}'\bigl)$, i.e. to the fact that $(ML5)$ and $(ML6)$ hold.

Assume now that $v: V \to V$ is bijective. Then $\psi_{(r, v)}$ is
an isomorphism of Leibniz algebras with the inverse given by $
\psi_{(r, v)}^{-1}(h, y) = \bigl(h - r(v^{-1}(y)),
v^{-1}(y)\bigl)$, for all $h \in \mathfrak{g}$ and $y \in V$.
Conversely, assume that $\psi_{(r, v)}$ is bijective. It follows
easily that $v$ is surjective. Thus, we are left to prove that $v$
is injective. Indeed, let $x \in V$ such that $v(x) = 0$. We have
 $\psi_{(r, v)}(0, 0) = (0, 0) = (0, v(x)) = \psi_{(r, v)}(-
r(x), x)$, and hence we obtain $x = 0$, i.e. $v$ is a bijection.
The last assertion is trivial and the proof is now finished.
\end{proof}

\begin{definition}\delabel{echiaa}
Two Leibniz extending structures $\Omega(\mathfrak{g}, V) =
\bigl(\triangleleft, \, \triangleright, \leftharpoonup, \,
\rightharpoonup, \, f, \{-, \, -\} \bigl)$ and
$\Omega'(\mathfrak{g}, V) = \bigl(\triangleleft ', \,
\triangleright ', \leftharpoonup ', \, \rightharpoonup ', \, f',
\{-, \, -\}' \bigl)$ are called \emph{equivalent} and we denote
this by $\Omega(\mathfrak{g}, V) \equiv \Omega'(\mathfrak{g}, V)$,
if there exists a pair $(r, v)$ of linear maps, where $r: V \to
\mathfrak{g}$ and $v \in {\rm Aut}_{k}(V)$ such that
$\bigl(\triangleleft, \, \triangleright, \leftharpoonup, \,
\rightharpoonup, \, f, \{-, \, -\} \bigl)$ is implemented from
$\bigl(\triangleleft ', \, \triangleright ', \leftharpoonup ', \,
\rightharpoonup ', \, f', \{-, \, -\}' \bigl)$ using $(r, v)$ via:
\begin{eqnarray*}
x \triangleleft g &=& v^{-1} \bigl(v(x) \triangleleft ' g\bigl)\\
g \rightharpoonup x &=& v^{-1} \bigl(g \rightharpoonup ' v(x)\bigl)\\
x \triangleright g &=& [r(x), \, g] + v(x) \triangleright ' g - r \circ v^{-1}
\bigl(v(x) \triangleleft ' g\bigl) \\
g \leftharpoonup x &=& [g, \, r(x)] + g \leftharpoonup ' v(x) - r
\circ v^{-1} \bigl(g \rightharpoonup ' v(x)\bigl)\\
\{x, \, y\} &=& v^{-1} \bigl(r(x) \rightharpoonup ' v(y) + v(x)
\triangleleft ' r(y) + \{v(x), \, v(y)\}'\bigl)\\
f(x,\, y) &=& [r(x), \, r(y)] + r(x) \leftharpoonup ' v(y) + v(x)
\triangleright ' r(y) + f ' \bigl(v(x), \, v(y)\bigl) - \\
&& r \circ v^{-1} \bigl(r(x) \rightharpoonup ' v(y) + v(x)
\triangleleft ' r(y) + \{v(x), \, v(y)\}'\bigl)
\end{eqnarray*}
for all $g \in \mathfrak{g}$, $x$, $y \in V$.
\end{definition}

Using \leref{morfismunif}, we obtain that $\Omega(\mathfrak{g}, V)
\equiv \Omega'(\mathfrak{g}, V)$ if and only if there exists $\psi
: \mathfrak{g} \ltimes V \to \mathfrak{g} \ltimes' V$ an
isomorphism of Leibniz algebras that stabilizes $\mathfrak{g}$,
where $\mathfrak{g} \ltimes V$ and $\mathfrak{g} \ltimes' V$ are
the corresponding unified products. On the other hand, the
isomorphisms between two unified products that stabilize
$\mathfrak{g}$ and co-stabilize $V$ are decoded by the following:

\begin{definition}\delabel{echiaab}
Two Leibniz extending structures $\Omega(\mathfrak{g}, V) =
\bigl(\triangleleft, \, \triangleright, \leftharpoonup, \,
\rightharpoonup, \, f, \{-, \, -\} \bigl)$ and
$\Omega'(\mathfrak{g}, V) = \bigl(\triangleleft ', \,
\triangleright ', \leftharpoonup ', \, \rightharpoonup ', \, f',
\{-, \, -\}' \bigl)$ are called \emph{cohomologous} and we denote
this by $\Omega(\mathfrak{g}, V) \approx \Omega'(\mathfrak{g}, V)$
if and only if $\triangleleft \, = \, \triangleleft '$,
$\rightharpoonup \, = \,  \rightharpoonup '$ and there exists a
linear map $r: V \to \mathfrak{g}$ such that for any $g \in
\mathfrak{g}$, $x$, $y \in V$:
\begin{eqnarray*}
x \triangleright g &=& [r(x), \, g] + x \triangleright ' g - r(x \triangleleft ' g) \\
g \leftharpoonup x &=& [g, \, r(x)] + g \leftharpoonup ' x - r(g \rightharpoonup ' x)\\
\{x, \, y\} &=& r(x) \rightharpoonup ' y + x
\triangleleft ' r(y) + \{x, \, y\}'\\
f(x,\, y) &=& [r(x), \, r(y)] + r(x) \leftharpoonup ' y + x
\triangleright ' r(y) + f ' (x, \, y) -\\ &&r \bigl(r(x)
\rightharpoonup ' y + x \triangleleft ' r(y) + \{x, \, y\}'\bigl)
\end{eqnarray*}
\end{definition}

As a conclusion of this section, the theoretical answer to the
ES-problem follows:

\begin{theorem}\thlabel{main1}
Let $\mathfrak{g}$ be a Leibniz algebra, $E$ a vector space that
contains $\mathfrak{g}$ as a subspace and $V$ a complement of
$\mathfrak{g}$ in $E$. Then:

$(1)$ $\equiv$ is an equivalence relation on the set ${\mathcal
L}{\mathcal Z} (\mathfrak{g}, V)$ of all Leibniz extending
structures of $\mathfrak{g}$ through $V$. If we denote ${\mathcal
H}{\mathcal L}^{2}_{\mathfrak{g}} \, (V, \, \mathfrak{g} ) :=
{\mathcal L} {\mathcal Z} (\mathfrak{g}, V)/ \equiv $, then the
map
$$
{\mathcal H}{\mathcal L}^{2}_{\mathfrak{g}} \, (V, \, \mathfrak{g}
) \to {\rm ExtdL} \, (E, \mathfrak{g}), \qquad
\overline{(\triangleleft, \triangleright, \leftharpoonup, \,
\rightharpoonup, f, \{-, \, -\})} \mapsto \bigl(\mathfrak{g}
\ltimes V, \, [ -  , \, - ] \bigl)
$$
is bijective, where $\overline{(\triangleleft, \triangleright,
\leftharpoonup, \, \rightharpoonup, f, \{-, \, -\})}$ is the
equivalence class of $(\triangleleft, \triangleright,
\leftharpoonup, \, \rightharpoonup, f, \{-, \, -\})$ via $\equiv$.

$(2)$ $\approx$ is an equivalence relation on the set ${\mathcal
L} {\mathcal Z} (\mathfrak{g}, V)$ of all Leibniz extending
structures of $\mathfrak{g}$ through $V$. If we denote ${\mathcal
H}{\mathcal L}^{2} \, (V, \, \mathfrak{g} ) := {\mathcal
L}{\mathcal Z} (\mathfrak{g}, V)/ \approx $, then the map
$$
{\mathcal H}{\mathcal L}^{2} \, (V, \, \mathfrak{g} ) \to {\rm
ExtdL}' \, (E, \mathfrak{g}), \qquad
\overline{\overline{(\triangleleft, \triangleright,
\leftharpoonup, \, \rightharpoonup, f, \{-, \, -\})}} \mapsto
\bigl(\mathfrak{g} \ltimes V, \, [ -  , \, - ] \bigl)
$$
is bijective, where $\overline{\overline{(\triangleleft,
\triangleright, \leftharpoonup, \, \rightharpoonup, f, \{-, \,
-\})}}$ is the equivalence class of $(\triangleleft,
\triangleright, \leftharpoonup, \, \rightharpoonup,  f, \{-, \,
-\})$ via $\approx$.
\end{theorem}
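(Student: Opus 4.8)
The plan is to deduce both statements from \leref{morfismunif} and \thref{classif}; the only point requiring genuine care is a judicious choice of linear splitting in the surjectivity argument. First I would fix the linear isomorphism $\gamma : \mathfrak{g} \times V \to E$, $\gamma(g, x) = g + x$, which is the identity on $\mathfrak{g} \cong \mathfrak{g} \times \{0\}$ and on $V \cong \{0\} \times V$ and satisfies $\pi \circ \gamma = q$, where $\pi : E \to V$ is the projection along $\mathfrak{g}$ and $q(g, x) = x$. For $\Omega = \Omega(\mathfrak{g}, V) \in {\mathcal L}{\mathcal Z}(\mathfrak{g}, V)$ let $[-,-]_\Omega$ denote the Leibniz bracket on $E$ obtained by transporting \equref{brackunif} along $\gamma$; since $i_{\mathfrak{g}}$ is a Leibniz algebra map, $\mathfrak{g}$ is a subalgebra of $(E, [-,-]_\Omega)$, and $\gamma : \mathfrak{g} \ltimes V \to (E, [-,-]_\Omega)$ is, tautologically, an isomorphism of Leibniz algebras stabilizing $\mathfrak{g}$ and co-stabilizing $V$. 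Under the identification $E = \mathfrak{g} \times V$ afforded by $\gamma$, the map in the theorem sends the class of $\Omega$ to the class of $(E, [-,-]_\Omega)$.

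For part $(1)$: as observed after \deref{echiaa}, $\Omega \equiv \Omega'$ if and only if there is an isomorphism of Leibniz algebras $\mathfrak{g} \ltimes V \to \mathfrak{g} \ltimes ' V$ stabilizing $\mathfrak{g}$, so $\equiv$ is an equivalence relation (identity maps give reflexivity, inverses of stabilizing isomorphisms are again of this form by \leref{morfismunif}, and composites of stabilizing isomorphisms stabilize $\mathfrak{g}$). I would then check the map is well defined: if $\Omega \equiv \Omega'$ via a stabilizing isomorphism $\psi$, then $\gamma \psi \gamma^{-1} : (E, [-,-]_\Omega) \to (E, [-,-]_{\Omega'})$ is a Leibniz isomorphism stabilizing $\mathfrak{g}$, since $\psi(g, 0) = (g, 0)$ by \leref{morfismunif} and $\gamma$ restricts to the identity on $\mathfrak{g}$; hence $[-,-]_\Omega \equiv [-,-]_{\Omega'}$. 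For surjectivity, given a Leibniz bracket $[-,-]$ on $E$ having $\mathfrak{g}$ as a subalgebra, I would apply \thref{classif} with the splitting $p : E \to \mathfrak{g}$ chosen so that $\Ker p = V$; this produces $\Omega \in {\mathcal L}{\mathcal Z}(\mathfrak{g}, V)$ whose associated isomorphism coincides with $\gamma$, whence $[-,-]_\Omega = [-,-]$ and $[-,-]$ lies in the image. For injectivity, if $[-,-]_\Omega \equiv [-,-]_{\Omega'}$ via a stabilizing Leibniz isomorphism $\theta$, then $\gamma^{-1} \theta \gamma : \mathfrak{g} \ltimes V \to \mathfrak{g} \ltimes ' V$ is a Leibniz isomorphism stabilizing $\mathfrak{g}$, so $\Omega \equiv \Omega'$ by the characterization above.

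For part $(2)$ I would run the same four steps verbatim, replacing ``stabilizes $\mathfrak{g}$'' throughout by ``stabilizes $\mathfrak{g}$ and co-stabilizes $V$'' and using \deref{echiaab} together with the final assertion of \leref{morfismunif} (that $\psi_{(r,v)}$ co-stabilizes $V$ precisely when $v = {\rm Id}_V$, in which case it is automatically an isomorphism — this gives reflexivity, symmetry and transitivity of $\approx$). The only extra verifications needed are that $\gamma$, $\gamma^{-1}$ and the conjugated maps $\gamma \psi \gamma^{-1}$, $\gamma^{-1} \theta \gamma$ also preserve co-stabilization of $V$, which follows at once from $\gamma|_V = {\rm Id}_V$ and $\pi \circ \gamma = q$. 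I expect no serious obstacle: apart from the observation in the surjectivity step that the complement furnished by \thref{classif} can, and must, be taken to be the given complement $V$ rather than some other one, every step is a formal transport of structure along $\gamma$ combined with \leref{morfismunif}.
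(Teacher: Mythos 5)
Your proposal is correct and follows exactly the route the paper intends: the paper states \thref{main1} without a formal proof, relying on the observation after \deref{echiaa} that $\equiv$ (resp.\ $\approx$) corresponds to stabilizing (resp.\ stabilizing and co-stabilizing) isomorphisms of unified products via \leref{morfismunif}, together with \thref{classif} for surjectivity. Your write-up merely makes explicit the transport along $\gamma$ and the choice of splitting $p$ with $\Ker p = V$, which is precisely what the paper leaves to the reader.
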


\section{Flag extending structures of Leibniz algebras. Examples}\selabel{exemple}

After we have provided a theoretical answer to the ES problem in
\thref{main1}, we are left to compute the classifying object
${\mathcal H} {\mathcal L}^{2}_{\mathfrak{g}} \, (V, \,
\mathfrak{g} )$ for a given Leibniz algebra $\mathfrak{g}$ that is
a subspace in a vector space $E$ with a complement $V$ and then to
describe all Leibniz algebra structures on $E$ which extend the
one of $\mathfrak{g}$. In this section we propose an algorithm to
tackle the problem for a large class of such structures.

\begin{definition} \delabel{flagex}
Let $\mathfrak{g}$ be a Leibniz algebra and $E$ a vector space
containing $\mathfrak{g}$ as a subspace. A Leibniz algebra
structure on $E$ is called a \emph{flag extending structure} of
$\mathfrak{g}$ if there exists a finite chain of Leibniz
subalgebras of $E$
\begin{equation} \eqlabel{lant}
\mathfrak{g} = E_0 \subset E_1 \subset \cdots \subset E_m = E
\end{equation}
such that $E_i$ has codimension $1$ in $E_{i+1}$, for all $i = 0,
\cdots, m-1$.
\end{definition}

As an easy consequence of \deref{flagex} we have that ${\rm dim}_k
(V) = m$, where $V$ is the complement of $\mathfrak{g}$ in $E$. In
what follows we will provide a way of describing all flag
extending structures of $\mathfrak{g}$ to $E$ in a recursive
manner which relies on the first step, namely $m = 1$. Therefore,
we start by describing and classifying all unified products
$\mathfrak{g} \ltimes V_1$, for a $1$-dimensional vector space
$V_1$. This procedure can be iterated by replacing the initial
Leibniz algebra $\mathfrak{g}$ with a unified product
$\mathfrak{g} \ltimes V_1$ obtained in the previous step. After
$m$ steps we arrive at the description of all flag extending
structures of $\mathfrak{g}$ to $E$. We start by introducing the
following two concepts:

\begin{definition} \delabel{tehnicaa}
Let $\mathfrak{g}$ be a Leibniz algebra. A \emph{flag datum of the
first kind} of $\mathfrak{g}$ is a $5$-tuple $(g_{0}, \, \alpha,
\, \lambda,\, D,\, \Delta)$, where $g_{0} \in \mathfrak{g}$,
$\alpha \in k$, $\lambda: \mathfrak{g} \to k$, $D$ and $\Delta:
\mathfrak{g} \to \mathfrak{g}$ are linear maps satisfying the
following compatibilities for any $g$, $h\in \mathfrak{g}$:
\begin{enumerate}
\item[(F1)] $\lambda\bigl([g, \, h]\bigl) = 0$, \, \,
$\lambda\bigl(D(g)\bigl) + \, \alpha \, \lambda(g) = 0$, \, \,
$\lambda\bigl(\Delta (g)\bigl) = 0$;

\item[(F2)] $D(g_0) = - \, \alpha \, g_{0}$, \,\, $\lambda (g_0) =
- \, \alpha^{2}$, \,\,  $\alpha \, \Delta(g) = -[g, \, g_0]$;

\item[(F3)] $[g, \, \Delta(h)] + [g, \, D(h)] = - \lambda(h) \,
\Delta(g)$;

\item[(F4)] $D^{2}(g) + D\bigl(\Delta(g)\bigl) \, = - \lambda(g)
\, g_{0}$;

\item[(F5)] $D^{2}(g) + \Delta \bigl(D(g)\bigl) \, = \alpha \,
D(g) + [g_{0},\, g] - 2 \, \lambda(g) \, g_{0}$;

\item[(F6)] $\Delta \bigl([g, \, h]\bigl) \, = [\Delta(g), \, h] +
[g, \, \Delta(h)]$;

\item[(F7)] $D\bigl([g, \, h]\bigl) \, = [D(g), \, h] - [D(h), \,
g] + \lambda(g) D(h) - \lambda(h) D(g)$
\end{enumerate}
We denote by ${\mathcal F}_{1} \, (\mathfrak{g})$ the set of all
flag datums of the first kind of $\mathfrak{g}$.
\end{definition}

\begin{definition} \delabel{tehnicab}
Let $\mathfrak{g}$ be a Leibniz algebra. A \emph{flag datum of the
second kind} of $\mathfrak{g}$ is a quadruple $(g_{0}, \, \nu,\,
D,\, \Delta)$, where $g_{0} \in \mathfrak{g}$, $\nu: \mathfrak{g}
\to k$, $\nu \neq 0$ is a non-trivial map, $D$ and $\Delta:
\mathfrak{g} \to \mathfrak{g}$ are linear maps satisfying the
following compatibilities for any $g$, $h\in \mathfrak{g}$:
\begin{enumerate}
\item[(G1)] $\nu\bigl([g,\, h]\bigl) = 0$, \,\, $[g, \, g_0] = 0$,
\,\, $D(g_0) = 0$, \,\, $\nu (g_0) = 0$;

\item[(G2)] $[g, \, \Delta(h)] + [g, \, D(h)] = 0$, \,\,\,\,
$\nu\bigl(D(g)\bigl) + \, \nu\bigl(\Delta (g)\bigl) = 0$;

\item[(G3)] $D^{2}(g) + D\bigl(\Delta (g)\bigl) \, = 0$, \, \,
$D^{2}(g) + \Delta\bigl(D(g)\bigl) \, = [g_{0}, \, g] + 2 \,
\nu(g) \, g_{0}$;

\item[(G4)] $\Delta\bigl([g, \, h]\bigl) \, = [\Delta(g), \, h] +
[g, \, \Delta(h)] + \nu(h) \Delta(g) + \nu(g) D(h)$;

\item[(G5)] $D\bigl([g, \, h]\bigl) \, = [D(g), \, h] - [D(h), \,
g] - \nu(g) D(h) + \nu(h) D(g)$;
\end{enumerate}
\end{definition}

We denote by ${\mathcal F}_{2} \, (\mathfrak{g})$ the set of all
flag datums of the second kind of $\mathfrak{g}$ and by ${\mathcal
F} \, (\mathfrak{g}) := {\mathcal F}_{1} \, (\mathfrak{g}) \sqcup
{\mathcal F}_{2} \, (\mathfrak{g})$, the disjoint union of the two
sets. The elements of ${\mathcal F} \, (\mathfrak{g})$ will be
called \emph{flag datums} of $\mathfrak{g}$. ${\mathcal F} \,
(\mathfrak{g})$ contains the space of pointed double derivations
${\mathcal D} \, (\mathfrak{g})$ of $\mathfrak{g}$ via the
canonical embedding: ${\mathcal D} (\mathfrak{g}) \hookrightarrow
{\mathcal F}_1 \, (\mathfrak{g})$, $(g_0, \, D, \, \Delta) \mapsto
(g_0, \, 0, \, 0, \, D, \, \Delta)$. The next proposition shows
that the space ${\mathcal F} \, (\mathfrak{g})$ is the counterpart
for Leibniz algebras of what we have called in \cite[Definition
4.2]{am-2013} the space of twisted derivations of a Lie algebra:

\begin{proposition}\prlabel{unifdim1}
Let $\mathfrak{g}$ be a Leibniz algebra and $V$ a vector space of
dimension $1$ with a basis $\{x\}$. Then there exists a bijection
between the set ${\mathcal L} {\mathcal Z} \, (\mathfrak{g}, V)$
of all Leibniz extending structures of $\mathfrak{g}$ through $V$
and ${\mathcal F} \, (\mathfrak{g}) = {\mathcal F}_{1} \,
(\mathfrak{g}) \sqcup {\mathcal F}_{2} \, (\mathfrak{g})$.

Under the above bijective correspondence the Leibniz extending
structure $\Omega(\mathfrak{g}, V)  = \bigl(\triangleleft, \,
\triangleright, \leftharpoonup,\, \rightharpoonup, \, f, \{-, -\}
\bigl)$ corresponding to $(g_0, \, \alpha, \, \lambda, \, D, \, \Delta)
\in {\mathcal F}_{1} \, (\mathfrak{g})$ is given by:
\begin{eqnarray}
x \triangleleft g &=& \lambda (g) x, \quad \,\,\,\,\,\,\,\,
x \triangleright g = D(g), \quad f (x, x) = g_0 \eqlabel{extenddim1.1a} \\
g \leftharpoonup x &=& \Delta(g), \qquad g \rightharpoonup x = 0,
\qquad \,\,\,\,\, \{x, \, x\} = \alpha \, x
\eqlabel{extenddim1.1b}
\end{eqnarray}
while the Leibniz extending structure $\Omega(\mathfrak{g}, V)  =
\bigl(\triangleleft, \, \triangleright, \leftharpoonup,\,
\rightharpoonup, \, f, \{-, -\} \bigl)$ corresponding to $(g_0, \,
\nu, \, D, \, \Delta) \in {\mathcal F}_{2} \, (\mathfrak{g})$ is given
by:
\begin{eqnarray}
x \triangleleft g &=& - \nu (g) x, \quad \,\,\, x \triangleright g
= D(g), \quad f (x, x) = g_0 \eqlabel{extenddim1.2a}\\
g \leftharpoonup x &=& \Delta (g), \qquad g \rightharpoonup x = \nu (g)
x, \quad \{x, \, x\} = 0 \eqlabel{extenddim1.2b}
\end{eqnarray}
for all $g \in \mathfrak{g}$.
\end{proposition}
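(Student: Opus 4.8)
The plan is to exploit heavily the hypothesis $\dim_k V = 1$. Fixing the basis $\{x\}$ of $V$, a bilinear map out of or into $V$ is determined by a single vector, scalar, or linear functional; concretely, any extending datum $\Omega(\mathfrak{g}, V) = \bigl(\triangleleft, \triangleright, \leftharpoonup, \rightharpoonup, f, \{-,-\}\bigr)$ of $\mathfrak{g}$ through $V$ is uniquely encoded by a $6$-tuple $(g_0, \alpha, \lambda, \mu, D, \Delta)$ with $g_0 \in \mathfrak{g}$, $\alpha \in k$, $\lambda, \mu \colon \mathfrak{g} \to k$ and $D, \Delta \colon \mathfrak{g} \to \mathfrak{g}$ linear, via $f(x,x) = g_0$, $\{x,x\} = \alpha\,x$, $x \triangleleft g = \lambda(g)\,x$, $x \triangleright g = D(g)$, $g \leftharpoonup x = \Delta(g)$ and $g \rightharpoonup x = \mu(g)\,x$. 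This assignment is a bijection between extending data of $\mathfrak{g}$ through $V$ and such $6$-tuples, so the real task is to rewrite the compatibility conditions $(L1)$--$(L14)$ of \thref{1} as conditions on $(g_0, \alpha, \lambda, \mu, D, \Delta)$. Since each of $(L1)$--$(L14)$ is multilinear in the $V$-arguments $x, y, z$ and $V$ is one-dimensional, it suffices to evaluate every axiom on $x = y = z$ equal to the fixed basis vector; with the easy identities \equref{001}--\equref{002} each axiom then collapses to a single relation in $\mathfrak{g}$ or in $k$.

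The first key point is the dichotomy built into ${\mathcal F}\,(\mathfrak{g})$. Specializing $(L14)$ yields $\mu(g)\bigl(\mu(h) + \lambda(h)\bigr) = 0$ and $(L9)$ yields $\alpha\,\mu(g) = 0$, for all $g, h \in \mathfrak{g}$. Hence either $\mu = 0$, in which case $\lambda$ and $\alpha$ are not constrained by these two axioms and we are in the situation of a flag datum of the first kind, recorded by $(g_0, \alpha, \lambda, D, \Delta)$ and matching \equref{extenddim1.1a}--\equref{extenddim1.1b}; or $\mu \neq 0$, forcing $\lambda = -\mu$ and $\alpha = 0$, and then, writing $\nu := \mu$, we are in the situation of a flag datum of the second kind, recorded by $(g_0, \nu, D, \Delta)$ with $\nu \neq 0$ and matching \equref{extenddim1.2a}--\equref{extenddim1.2b}. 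These two possibilities are mutually exclusive precisely because the second requires $\nu \neq 0$, which is exactly what produces the disjoint union ${\mathcal F}\,(\mathfrak{g}) = {\mathcal F}_{1}\,(\mathfrak{g}) \sqcup {\mathcal F}_{2}\,(\mathfrak{g})$.

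It then remains to run through the remaining axioms in each branch. When $\mu = 0$: $(L1)$ gives $\lambda([g,h]) = 0$, $(L8)$ gives $\lambda(D(g)) + \alpha\lambda(g) = 0$, and $(L13)$ combined with $(L8)$ gives $\lambda(\Delta(g)) = 0$, which together are exactly $(F1)$; $(L5)$, $(L6)$, $(L10)$ give respectively $D(g_0) = -\alpha g_0$, $\lambda(g_0) = -\alpha^2$, $\alpha\Delta(g) = -[g, g_0]$, i.e. $(F2)$; $(L11)$ gives $(F3)$; $(L12)$ gives $(F4)$; $(L7)$ gives $(F5)$; $(L4)$ gives $(F6)$; $(L2)$ gives $(F7)$; and $(L3)$, $(L9)$, $(L14)$ become tautologies. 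When $\mu = \nu \neq 0$ (so $\lambda = -\nu$, $\alpha = 0$): $(L1)$ (equivalently $(L3)$), $(L10)$, $(L5)$, $(L6)$ give $\nu([g,h]) = 0$, $[g, g_0] = 0$, $D(g_0) = 0$, $\nu(g_0) = 0$, i.e. $(G1)$; $(L11)$ and $(L13)$ give $(G2)$; $(L12)$ and $(L7)$ give $(G3)$; $(L4)$ gives $(G4)$; $(L2)$ gives $(G5)$; and $(L8)$, $(L9)$, $(L14)$ become tautologies. Thus $\Omega(\mathfrak{g}, V)$ is a Leibniz extending structure if and only if its $6$-tuple belongs to ${\mathcal F}_{1}\,(\mathfrak{g})$ (when $\mu = 0$) or to ${\mathcal F}_{2}\,(\mathfrak{g})$ (when $\mu \neq 0$), and the formulas \equref{extenddim1.1a}--\equref{extenddim1.2b} exhibit the announced bijection together with its inverse.

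The whole argument is, in essence, careful bookkeeping; the one place genuinely needing attention is matching the list of relations extracted from $(L1)$--$(L14)$ against exactly $(F1)$--$(F7)$ (resp. $(G1)$--$(G5)$), with no leftover and no missing identity — in particular, $\lambda(\Delta(g)) = 0$ emerges only after cancelling the $(L8)$-consequence inside $(L13)$, and one must verify that the relations responsible for the dichotomy, namely $(L9)$ and $(L14)$, indeed become vacuous once the dichotomy has been imposed. I expect this matching to be the main — though routine — obstacle; everything else is direct substitution into \equref{brackunif} by way of \equref{001}--\equref{002}.
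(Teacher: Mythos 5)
Your proposal is correct and follows essentially the same route as the paper's proof: parametrize the six bilinear maps by the tuple $(g_0,\alpha,\lambda,\mu,D,\Delta)$, use the specializations of $(L9)$ and $(L14)$ to force the dichotomy $\mu=0$ versus $\mu\neq 0$ (whence $\alpha=0$ and $\lambda=-\mu$), and then translate the remaining axioms into $(F1)$--$(F7)$, respectively $(G1)$--$(G5)$. Your explicit axiom-by-axiom matching (in particular extracting $\lambda(\Delta(g))=0$ from $(L13)$ modulo $(L8)$) is a correct filling-in of what the paper dismisses as a routine computation.
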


\begin{proof}
We have to compute the set of all bilinear maps $ \triangleleft :
V \times \mathfrak{g} \to V$, $\triangleright : V \times
\mathfrak{g} \to \mathfrak{g}$, $\leftharpoonup : \mathfrak{g}
\times V \to \mathfrak{g}$, $\rightharpoonup : \mathfrak{g} \times
V \to V$, $f: V\times V \to \mathfrak{g}$ and $\{-, \, -\} :
V\times V \to V$ satisfying the compatibility conditions
$(L1)-(L14)$ of \thref{1}. Since $V$ has dimension $1$ there
exists a bijection between the set of all bilinear maps
$\triangleleft : V \times \mathfrak{g} \to V$ and the set of all
linear maps $\lambda : \mathfrak{g} \to k$ and the bijection is
given such that the action $\triangleleft : V \times \mathfrak{g}
\to V$ associated to $\lambda$ is given by the formula: $ x
\triangleleft g := \lambda (g) x$, for all $g\in \mathfrak{g}$. In
the same manner, the action $\rightharpoonup: \mathfrak{g} \times
V \to V$ is uniquely determined by a linear map $\nu: \mathfrak{g}
\to k$ such that $g \rightharpoonup x = \nu(g) x$, for all $g \in
\mathfrak{g}$. Similarly, the bilinear maps $\triangleright :
V\times \mathfrak{g} \to \mathfrak{g}$, $\leftharpoonup:
\mathfrak{g} \times V \to \mathfrak{g}$ are uniquely implemented
by linear maps $D = D_{\triangleright} : \mathfrak{g} \to
\mathfrak{g}$ respectively $\Delta = \Delta_{\leftharpoonup} : \mathfrak{g}
\to \mathfrak{g}$ via the formulas: $x \triangleright g := D(g)$
and $g \leftharpoonup x := \Delta (g)$, for all  $g\in \mathfrak{g}$.
Finally, any bilinear map $f : V \times V \to \mathfrak{g}$ is
uniquely implemented by an element $g_0 \in \mathfrak{g}$ such
that $f (x, x) = g_0$ and any bracket $\{-, \, -\} : V\times V \to
V$ is uniquely determined by a scalar $\alpha \in k$ such that
$\{x, \, x\} = \alpha x$.

Now, the compatibility condition $(L9)$ is equivalent to $\alpha
\, \nu(g) = 0$, for all $g\in \mathfrak{g}$, while $(L14)$ gives
$\nu(g)\, \bigl(\nu(h) + \lambda(h)\bigl) = 0$, for all $g$, $h
\in \mathfrak{g}$. If $\nu = 0$, the trivial map on
$\mathfrak{g}$, then $(L9)$ and $(L14)$ are trivially fulfilled
and the rest of the compatibility conditions of \thref{1} came
down to $(F1)$-$(F7)$ from the definition of ${\mathcal F}_{1} \,
(\mathfrak{g})$. This can be proved by a routinely computation:
for instance, axiom $(L1)$ is equivalent to $\lambda \bigl( [g, \,
h ] \bigl) = 0$ while axiom $(L2)$ is equivalent to $(F7)$.
Otherwise, if $\nu \neq 0$ implies that $\alpha = 0$ and $\lambda
= - \nu$. Based on this, it is straightforward to see that the
compatibility conditions $(L1)-(L14)$ of \thref{1} are equivalent
to $(G1)-(G5)$ from the definition of ${\mathcal F}_{2} \,
(\mathfrak{g})$.
\end{proof}

Let $(g_0, \, \alpha,\, \lambda, \, D, \, \Delta) \in {\mathcal F}_{1}
\, (\mathfrak{g})$. The unified product $\mathfrak{g}
\ltimes_{(g_0, \, \alpha,\, \lambda, \, D, \, \Delta)} V$ associated to
the Leibniz extending structure given by \equref{extenddim1.1a} -
\equref{extenddim1.1b} will be denoted by $\mathfrak{g}_{1} \, ( x
\, | \, (g_0, \,\alpha,\, \lambda, \, D, \, \Delta))$ and has the
bracket defined by:
\begin{eqnarray}
\left[(g, 0), \, (h, 0)\right] &=& ([g, h], \, 0), \quad \, [(g,
0), \, (0, x)] = (\Delta(g), \, 0) \eqlabel{primuunif1} \\
\left[(0, x), \, (0, x)\right] &=& (g_0, \, \alpha \, x), \quad
\,\,\, [(0, x), \, (g, 0)] = (D(g), \, \lambda(g) x)
\eqlabel{primuunif2}
\end{eqnarray}
for all $g$, $h\in \mathfrak{g}$. On the other hand, for $(g_0, \,
\nu, \, D, \, \Delta) \in {\mathcal F}_{2} \, (\mathfrak{g})$, the
unified product $\mathfrak{g} \ltimes_{(g_0, \, \nu, \, D, \, \Delta)}
V$ associated to the Leibniz extending structure given by
\equref{extenddim1.2a}-\equref{extenddim1.2b} will be denoted by
$\mathfrak{g}_{2} \, ( x \, | \, (g_0, \, \nu, \, D, \, \Delta))$ and
has the bracket defined by:
\begin{eqnarray}
\left[(g, 0), \, (h, 0)\right] &=& ([g, h], \, 0), \quad [(g, 0),
\, (0, x)] = (\Delta (g), \, \nu(g) x) \eqlabel{primuunif11} \\
\left[(0, x), \, (0, x) \right] &=& (g_0, \, 0), \quad
\,\,\,\,\,\,\, [(0, x), \, (g, 0)] = (D(g), \, - \nu(g) x)
\eqlabel{primuunif12}
\end{eqnarray}
for all $g$, $h\in \mathfrak{g}$. Thus, we have obtained the
following:

\begin{corollary}\colabel{descflagnecl}
Any Leibniz algebra that contains a given Leibniz algebra
$\mathfrak{g}$ as a subalgebra of codimension $1$ is isomorphic to
a Leibniz algebra of type $\mathfrak{g}_{1} \, ( x \, | \, (g_0,
\,\alpha,\, \lambda, \, D, \, \Delta))$, for some $(g_0,
\,\alpha,\, \lambda, \, D, \, \Delta) \in {\mathcal F}_1 \,
(\mathfrak{g})$ or to a Leibniz algebra of type $\mathfrak{g}_{2}
\, ( x \, | \, (g_0, \, \nu, \, D, \, \Delta))$, for some $(g_0,
\, \nu, \, D, \, \Delta) \in {\mathcal F}_2 \, (\mathfrak{g})$.
\end{corollary}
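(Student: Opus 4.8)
The plan is to obtain this as an immediate consequence of \thref{classif} combined with \prref{unifdim1}; essentially all of the substantive computation has already been absorbed into the latter, so what remains is bookkeeping. First I would let $\mathfrak{E}$ be a Leibniz algebra containing $\mathfrak{g}$ as a subalgebra of codimension $1$ and choose a complement $V$ of $\mathfrak{g}$ in $\mathfrak{E}$, so that $\dim_k V = 1$. Applying \thref{classif} produces a Leibniz extending structure $\Omega(\mathfrak{g}, V) = \bigl(\triangleleft, \triangleright, \leftharpoonup, \rightharpoonup, f, \{-,-\}\bigr)$ of $\mathfrak{g}$ through $V$ together with an isomorphism of Leibniz algebras $\mathfrak{E} \cong \mathfrak{g} \ltimes V$.

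Next I would fix a basis $\{x\}$ of $V$ and invoke \prref{unifdim1}: since $\mathcal{F}(\mathfrak{g}) = \mathcal{F}_1(\mathfrak{g}) \sqcup \mathcal{F}_2(\mathfrak{g})$ is a disjoint union, the extending structure $\Omega(\mathfrak{g}, V)$ corresponds to exactly one flag datum, which is either of the first kind $(g_0, \alpha, \lambda, D, \Delta) \in \mathcal{F}_1(\mathfrak{g})$, with actions and cocycle given by \equref{extenddim1.1a}--\equref{extenddim1.1b}, or of the second kind $(g_0, \nu, D, \Delta) \in \mathcal{F}_2(\mathfrak{g})$, with data given by \equref{extenddim1.2a}--\equref{extenddim1.2b}. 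In the first case $\mathfrak{g} \ltimes V$ is by the definition preceding the corollary the Leibniz algebra $\mathfrak{g}_1(x \mid (g_0, \alpha, \lambda, D, \Delta))$, whose bracket is recorded in \equref{primuunif1}--\equref{primuunif2}; in the second case it is $\mathfrak{g}_2(x \mid (g_0, \nu, D, \Delta))$, with bracket \equref{primuunif11}--\equref{primuunif12}. Composing with the isomorphism from the first step yields the claim.

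The main obstacle has already been handled, namely the classification of the one-dimensional Leibniz extending structures carried out in \prref{unifdim1}; here there is no genuine difficulty. The only subtlety worth flagging is that translating the abstract extending structure into a concrete flag datum uses a choice of basis for $V$; a different choice merely rescales $x$ and replaces the flag datum by an equivalent one, but this does not affect the isomorphism class, which is all that the statement asserts.
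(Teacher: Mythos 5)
Your proposal is correct and follows exactly the paper's route: the corollary is stated there as an immediate consequence of \thref{classif} (reducing to a unified product $\mathfrak{g}\ltimes V$ with $\dim_k V=1$) together with \prref{unifdim1} (identifying the one-dimensional Leibniz extending structures with ${\mathcal F}_1(\mathfrak{g})\sqcup{\mathcal F}_2(\mathfrak{g})$ and the resulting brackets \equref{primuunif1}--\equref{primuunif2} and \equref{primuunif11}--\equref{primuunif12}). Your remark on the basis choice for $V$ is a sensible extra observation but not needed for the statement as given.
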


Now, we shall classify these Leibniz algebras up to an isomorphism
that stabilizes $\mathfrak{g}$, i.e. we give the first explicit
classification result for the ES-problem. This is the key step in
the classification of all flag extending structures of
$\mathfrak{g}$.

\begin{theorem}\thlabel{clasdim1}
Let $\mathfrak{g}$ be a Leibniz algebra of codimension $1$ in the
vector space $E$. Then:
$$
{\rm ExtdL} \, (E, \mathfrak{g}) \, \cong \, {\mathcal H}
{\mathcal L}^{2}_{\mathfrak{g}} (k, \mathfrak{g} ) \, \cong \,
({\mathcal F}_1 \, (\mathfrak{g}) /\equiv_1) \sqcup ({\mathcal
F}_2 \, (\mathfrak{g}) /\equiv_2), \qquad {\rm where:}
$$
$\equiv_1$ is the equivalence relation on the set ${\mathcal F}_1
\, (\mathfrak{g})$ defined as follows: $(g_{0}, \, \alpha, \,
\lambda,\, D,\, \Delta) \equiv_1 (g'_{0}, \, \alpha', \, \lambda', \,
D', \, \Delta')$ if and only if $\lambda = \lambda'$ and there exists a
pair $(q, G) \in k^* \times \mathfrak{g}$ such that for any $g \in
\mathfrak{g}$:
\begin{eqnarray}
g_0 &=& q^2 \, g'_0 + [G, G] + q \, D' (G) + q \, E' (G) - q \,
\alpha' G - \lambda' (G) G \eqlabel{lzeci1} \\
\alpha &=& q\, \alpha' + \lambda'(G) \eqlabel{lzeci2} \\
D(g) &=& q \, D'(g) + [G, g] - \lambda'(g) G \eqlabel{lzeci3} \\
\Delta(g)  &=& q \, \Delta'(g) + [g, G] \eqlabel{lzeci4}
\end{eqnarray}
$\equiv_2$ is the equivalence relation on ${\mathcal F}_2 \,
(\mathfrak{g})$ given by: $(g_{0}, \, \nu, \, D,\, \Delta) \equiv_2
(g'_{0}, \, \nu',\, D', \, \Delta')$ if and only if $\nu = \nu'$ and
there exists a pair $(q, G) \in k^* \times \mathfrak{g}$ such that
for any $g \in \mathfrak{g}$:
\begin{eqnarray}
g_0 &=& q^2 \, g'_0 + [G, G] + q \, D' (G) + q \, E' (G) \eqlabel{lzeci1a} \\
D(g) &=& q \, D'(g) + [G, g] + \nu'(g) \, G \eqlabel{lzeci2a} \\
\Delta(g)  &=& q \, \Delta'(g) + [g, G] - \nu'(g) \, G \eqlabel{lzeci3a}
\end{eqnarray}
The bijection between $({\mathcal F}_1 \, (\mathfrak{g})
/\equiv_1) \sqcup ({\mathcal F}_2 \, (\mathfrak{g}) /\equiv_2)$
and ${\rm ExtdL} \, (E, \mathfrak{g})$ is given by: \footnote{As
usual we denote by $\overline{y}^i$ the equivalence class of $y$
via the relation $\equiv_i$, $i = 1$, $2$.}
$$
\overline{(g_{0}, \, \alpha, \, \lambda,\, D,\, \Delta)}^1 \mapsto
\mathfrak{g}_{1} \, ( x \, | \, (g_0, \, \alpha, \, \lambda, \, D,
\, \Delta)) \,\,\, {\rm and} \,\,\, \overline{(g_{0}, \, \nu, \, D,\,
\Delta)}^2 \mapsto \mathfrak{g}_{2} \, ( x \, | \, (g_0, \, \nu, \, D,
\, \Delta))
$$
\end{theorem}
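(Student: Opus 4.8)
The plan is to assemble \thref{clasdim1} from the general machinery already in place, reducing everything to the one-dimensional case. By \thref{main1}(1) we have a bijection ${\mathcal H}{\mathcal L}^{2}_{\mathfrak{g}}(V,\mathfrak{g})\cong {\rm ExtdL}(E,\mathfrak{g})$ for $V$ a complement of $\mathfrak{g}$ in $E$; since $\mathfrak{g}$ has codimension $1$, we may take $V=k$ with basis $\{x\}$, so that ${\mathcal H}{\mathcal L}^{2}_{\mathfrak{g}}(k,\mathfrak{g})={\mathcal L}{\mathcal Z}(\mathfrak{g},V)/\!\equiv$. By \prref{unifdim1}, ${\mathcal L}{\mathcal Z}(\mathfrak{g},V)$ is in bijection with ${\mathcal F}(\mathfrak{g})={\mathcal F}_1(\mathfrak{g})\sqcup{\mathcal F}_2(\mathfrak{g})$, the explicit formulas \equref{extenddim1.1a}--\equref{extenddim1.2b} giving the dictionary. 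So the whole proof is: transport the equivalence relation $\equiv$ on ${\mathcal L}{\mathcal Z}(\mathfrak{g},V)$ across the bijection of \prref{unifdim1} and show it becomes exactly $\equiv_1$ on ${\mathcal F}_1$ and $\equiv_2$ on ${\mathcal F}_2$, with no cross-terms between the two pieces.

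First I would observe that $\equiv$ cannot identify a datum of the first kind with one of the second kind: an element of ${\mathcal F}_1$ has $\rightharpoonup$ trivial (i.e. $\nu=0$, since $g\rightharpoonup x=0$), whereas an element of ${\mathcal F}_2$ has $\nu\neq 0$; and the transformation rules in \deref{echiaa} show $g\rightharpoonup x = v^{-1}(g\rightharpoonup' v(x))$, so having $\rightharpoonup$ trivial is preserved under $\equiv$. Hence the disjoint-union decomposition passes to the quotient, giving ${\mathcal H}{\mathcal L}^{2}_{\mathfrak{g}}(k,\mathfrak{g})\cong({\mathcal F}_1/\!\equiv_1)\sqcup({\mathcal F}_2/\!\equiv_2)$ once $\equiv_1,\equiv_2$ are identified as the respective restrictions of $\equiv$. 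Next, on a one-dimensional $V$ with basis $\{x\}$, an automorphism $v\in{\rm Aut}_k(V)$ is just multiplication by a scalar $q\in k^*$ (so $v(x)=qx$, $v^{-1}(x)=q^{-1}x$), and a linear map $r:V\to\mathfrak{g}$ is determined by the single element $G:=r(x)\in\mathfrak{g}$. Substituting $v(x)=qx$, $r(x)=G$ into each of the six equations of \deref{echiaa} and reading off the scalar/linear-map components via the dictionary of \prref{unifdim1} is the core computation.

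Concretely: the equation $x\triangleleft g=v^{-1}(v(x)\triangleleft' g)$ becomes $\lambda(g)x = v^{-1}(q\lambda'(g)x)=\lambda'(g)x$, i.e. $\lambda=\lambda'$ (and in the second-kind case $-\nu=-\nu'$, i.e. $\nu=\nu'$); the equation $g\rightharpoonup x=v^{-1}(g\rightharpoonup' v(x))$ is automatic in the first-kind case and also yields $\nu=\nu'$ in the second-kind case. The equation $x\triangleright g=[r(x),g]+v(x)\triangleright' g - r\circ v^{-1}(v(x)\triangleleft' g)$ becomes $D(g)=[G,g]+qD'(g)-\lambda'(g)G$, which is \equref{lzeci3}; the equation for $g\leftharpoonup x$ becomes $\Delta(g)=[g,G]+q\Delta'(g)$, which is \equref{lzeci4} (the term $r\circ v^{-1}(g\rightharpoonup' v(x))$ vanishing since $\rightharpoonup'$ is trivial); the bracket equation for $\{x,x\}$ gives $\alpha x = v^{-1}(r(x)\rightharpoonup' v(x)+v(x)\triangleleft' r(x)+\{v(x),v(x)\}') = v^{-1}(0 + q\lambda'(r(x))x + q^2\alpha' x)$, i.e. $\alpha = \lambda'(G)+q\alpha'$, which is \equref{lzeci2}; and the equation for $f(x,x)$ gives \equref{lzeci1} after expanding $[r(x),r(x)]=[G,G]$, $r(x)\leftharpoonup' v(x)=q\,$ (value of $\leftharpoonup'$) $=qE'(G)$ in the paper's notation for the $\leftharpoonup'$-induced map, $v(x)\triangleright' r(x)=qD'(G)$, $f'(v(x),v(x))=q^2 g_0'$, and subtracting $r\circ v^{-1}$ of the $\{x,x\}$-expression, which contributes $-q\alpha'G-\lambda'(G)G$. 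The second-kind case is identical except $\alpha=0$ and $\{x,x\}=0$ throughout, so the $\{-,-\}$-equation collapses and the $f$-equation loses the $-q\alpha'G-\lambda'(G)G$ tail while \equref{lzeci2a}--\equref{lzeci3a} pick up the $\pm\nu'(g)G$ terms coming from $r(x)\rightharpoonup' v(x)$ and $v(x)\triangleleft' r(x)$; this reproduces \equref{lzeci1a}--\equref{lzeci3a}. Finally, the last displayed bijection in the statement is simply the composite of the bijection of \prref{unifdim1} with that of \thref{main1}(1), sending a class of flag data to the isomorphism class of the associated unified product $\mathfrak{g}_1(x\,|\,\cdots)$ or $\mathfrak{g}_2(x\,|\,\cdots)$.

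The main obstacle is purely bookkeeping: matching every term in the six transformation equations of \deref{echiaa} against the explicit bracket formulas \equref{primuunif1}--\equref{primuunif12}, being careful that the paper's $D'$ and the auxiliary map it writes as $E'$ (the $\leftharpoonup'$-induced derivation, i.e. what \prref{unifdim1} calls $\Delta'$ — one should double-check this notational point and use $\Delta'$ consistently) are correctly placed, and that the $-r\circ v^{-1}(\cdots)$ corrections land on the right summands. There is no conceptual difficulty once \thref{main1} and \prref{unifdim1} are granted; the only subtlety worth stating explicitly is the disjointness argument (first kind versus second kind is an $\equiv$-invariant), which is why the answer is a coproduct rather than a single quotient.
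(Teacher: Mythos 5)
Your proposal is correct and follows essentially the same route as the paper: invoke \thref{main1}(1) together with \prref{unifdim1}, parametrize $r$ by $G=r(x)$ and $v$ by $q\in k^*$, substitute into the six equations of \deref{echiaa} to recover \equref{lzeci1}--\equref{lzeci3a}, and note that first- and second-kind data are never $\equiv$-related because triviality of $\rightharpoonup$ is preserved (the paper cites (ML1) of \leref{morfismunif} for this). You carry out the term-by-term verification more explicitly than the paper does, and you correctly identify the paper's $E'$ as a notational slip for $\Delta'$.
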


\begin{proof}
The proof relies on \prref{unifdim1} and \thref{main1}. Let $V$ be
a complement of $\mathfrak{g}$ in $E$ having $\{x\}$ as a basis.
Since ${\rm dim}_k (V) = 1$, any linear map $r: V \to
\mathfrak{g}$ is uniquely determined by an element $G \in
\mathfrak{g}$ such that $r(x) = G$, where $\{x\}$ is a basis in
$V$. On the other hand, any automorphism $v$ of $V$ is uniquely
determined by a non-zero scalar $q\in k^*$ such such $v (x) = q
x$. Based on these facts, a little computation shows that the
compatibility conditions from \deref{echiaa}, imposed for the
Leibniz extending structures
\equref{extenddim1.1a}-\equref{extenddim1.1b} and respectively
\equref{extenddim1.2a}-\equref{extenddim1.2b}, take precisely the
form given in the statement of the theorem. We should mention here
that a Leibniz extending structure given by \equref{extenddim1.1a}
- \equref{extenddim1.1b} is never equivalent in the sense of
\deref{echiaa} to a Leibniz extending structure given by
\equref{extenddim1.2a}-\equref{extenddim1.2b}, thanks to the
compatibility condition (ML1) of \leref{morfismunif}. Therefore,
we obtain the disjoint union from the statement and the proof is
finished.
\end{proof}

\begin{remark}\relabel{alalatfl}
In the context of \thref{clasdim1} we also have that
$$
{\rm ExtdL'} \, (E, \mathfrak{g}) \, \cong \, {\mathcal H}
{\mathcal L}^{2}(k, \mathfrak{g} ) \, \cong \, ({\mathcal F}_1 \,
(\mathfrak{g}) /\approx_1) \sqcup ({\mathcal F}_2 \,
(\mathfrak{g}) /\approx_2), \qquad {\rm where:}
$$
$\approx_i$ is the following relation on ${\mathcal F}_i \,
(\mathfrak{g})$: $(g_{0}, \, \alpha, \, \lambda,\, D,\, \Delta)
\approx_1 (g'_{0}, \, \alpha', \, \lambda', \, D', \, \Delta')$ if
and only if $\lambda = \lambda'$ and there exists $G \in
\mathfrak{g}$ such that relations \equref{lzeci1}-\equref{lzeci4}
hold for $q = 1$ and respectively $(g_{0}, \, \nu, \, D,\, \Delta)
\approx_2 (g'_{0}, \, \nu',\, D', \, \Delta')$ if and only if $\nu
= \nu'$ and there exists $ G \in \mathfrak{g}$ such that
\equref{lzeci1a}-\equref{lzeci3a} hold for $q=1$.
\end{remark}

\thref{clasdim1} takes a simplified form for perfect Leibniz
algebras. Indeed, let $\mathfrak{g}$ be a perfect Leibniz algebra,
i.e. $\mathfrak{g}$ is generated as a vector space by all brackets
$[x, y]$. Then (G1) shows that ${\mathcal F}_2 \, (\mathfrak{g})$
is the empty set since, by definition, an element $\nu$ of a
quadruple $(g_{0}, \, \nu,\, D,\, \Delta) \in {\mathcal F}_2 \,
(\mathfrak{g})$ is a non-trivial map. Thus, we have that
${\mathcal F} \, (\mathfrak{g}) = {\mathcal F}_1 \,
(\mathfrak{g})$. Let now $(g_{0}, \, \alpha, \, \lambda,\, D,\,
\Delta) \in {\mathcal F}_1 \, (\mathfrak{g})$; it follows from
(F1) and (F2) that $\lambda = 0$, the trivial map, and $\alpha =
0$. Furthermore, we can easily see that for a perfect Leibniz
algebra $\mathfrak{g}$, ${\mathcal F} \, (\mathfrak{g})$
identifies with the set of all triples $(g_{0}, \, D, \, \Delta)$,
where $g_{0} \in \mathfrak{g}$, $D$, $\Delta: \mathfrak{g} \to
\mathfrak{g}$ are linear maps satisfying the compatibilities
\equref{flagper1}-\equref{flagper4}, that is ${\mathcal F} \,
(\mathfrak{g}) \cong {\mathcal D} (\mathfrak{g})$, where
${\mathcal D} (\mathfrak{g})$ is the space of all pointed double
derivations of $\mathfrak{g}$ as defined in \deref{dder}.

Two pointed double derivations $(g_{0}, \, D, \, \Delta)$ and
$(g'_{0}, \, D', \, \Delta')$ are equivalent and we write $(g_{0},
\, D, \, \Delta) \equiv (g'_{0}, \, D', \, \Delta')$ if and only
if there exists a pair $(q, G) \in k^* \times \mathfrak{g}$ such
that:
\begin{eqnarray}
g_0 &=& q^2 \, g'_0 + [G, G] + q \, D' (G) + q \, \Delta' (G) \eqlabel{lzeci1per} \\
D - q \, D' &=&  [G, - ], \qquad \Delta - q \, \Delta' = [ - , G]
\eqlabel{lzeci4per}
\end{eqnarray}
On the other hand, two pointed double derivations $(g_{0}, \, D,
\, \Delta)$ and $(g'_{0}, \, D', \, \Delta')$ are cohomologous and
we write $(g_{0}, \, D, \, \Delta) \approx (g'_{0}, \, D', \,
\Delta')$ if and only if there exists $ G \in \mathfrak{g}$ such
that:
\begin{eqnarray}
g_0 &=& g'_0 + [G, G] +  D' (G) +  \Delta' (G) \eqlabel{lzeci1per2} \\
D -  \, D' &=&  [G, - ], \qquad \Delta - \, \Delta' = [ - , G]
\eqlabel{lzeci4per2}
\end{eqnarray}

Taking into account the unified product defined by
\equref{primuunif1}-\equref{primuunif2} we obtain:

\begin{corollary}\colabel{clasdim1perf}
Let $\mathfrak{g}$ be a perfect Leibniz algebra having $\{e_i \, |
\, i\in I \}$ as a basis. Then any Leibniz algebra $E$ containing
$\mathfrak{g}$ as a subalgebra of codimension $1$ has the bracket
$[-, -]_E $ defined on the basis $\{x, \, e_i \, | \, i\in I \}$
by:
\begin{eqnarray*}
&& \left[e_i, \, e_j\right]_E := [e_i, \, e_j], \quad [x, \, x]_E
:= g_0, \quad \left[e_i, \, x\right]_E := \Delta(e_i), \quad \left[x,
\, e_i\right]_E := D(e_i)
\end{eqnarray*}
for all $(g_{0}, \, D, \, \Delta) \in {\mathcal D} (\mathfrak{g})
$. Furthermore, ${\mathcal H} {\mathcal L}^{2}_{\mathfrak{g}} (k,
\mathfrak{g} ) \cong {\mathcal D} \, (\mathfrak{g}) /\equiv$ and
${\mathcal H} {\mathcal L}^{2}(k, \mathfrak{g} ) \cong {\mathcal
D} \, (\mathfrak{g}) / \approx $, where $\equiv$ (resp. $\approx$)
is the relation defined by \equref{lzeci1per}-\equref{lzeci4per}
(resp. \equref{lzeci1per2}-\equref{lzeci4per2}).
\end{corollary}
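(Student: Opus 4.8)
The plan is to specialize \thref{clasdim1} together with the simplifications observed in the paragraphs immediately preceding the corollary. First I would recall that for a perfect Leibniz algebra $\mathfrak{g}$ the set ${\mathcal F}_2(\mathfrak{g})$ is empty: the defining map $\nu$ of a flag datum of the second kind must be non-trivial, yet (G1) forces $\nu([g,h]) = 0$ for all $g$, $h$, and perfectness means such brackets span $\mathfrak{g}$, so $\nu = 0$, a contradiction. Hence ${\mathcal F}(\mathfrak{g}) = {\mathcal F}_1(\mathfrak{g})$. Next, for $(g_0, \alpha, \lambda, D, \Delta) \in {\mathcal F}_1(\mathfrak{g})$, axiom (F1) gives $\lambda([g,h]) = 0$, so perfectness forces $\lambda = 0$; then the first relation of (F2), $\lambda(g_0) = -\alpha^2$, gives $\alpha^2 = 0$, hence $\alpha = 0$. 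With $\lambda = 0$ and $\alpha = 0$, conditions (F1)--(F7) collapse exactly onto \equref{flagper1}--\equref{flagper4}, so ${\mathcal F}_1(\mathfrak{g})$ is identified with the space ${\mathcal D}(\mathfrak{g})$ of pointed double derivations (this identification is already spelled out in the text preceding the corollary and can simply be cited).

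Having made this identification, I would then read off the bracket of the unified product. By \colabel{descflagnecl} (i.e. \coref{descflagnecl}) any Leibniz algebra $E$ containing $\mathfrak{g}$ with codimension $1$ is isomorphic to some $\mathfrak{g}_1(x \,|\, (g_0, \alpha, \lambda, D, \Delta))$, and under the reduction above this is $\mathfrak{g}_1(x \,|\, (g_0, 0, 0, D, \Delta))$ with $(g_0, D, \Delta) \in {\mathcal D}(\mathfrak{g})$. Substituting $\alpha = 0$ and $\lambda = 0$ into the bracket formulas \equref{primuunif1}--\equref{primuunif2} of the unified product $\mathfrak{g}_1$, and transporting through the identification $E \cong \mathfrak{g} \ltimes V$ with $V = kx$, yields precisely $[e_i, e_j]_E = [e_i, e_j]$, $[x,x]_E = g_0$, $[e_i, x]_E = \Delta(e_i)$, and $[x, e_i]_E = D(e_i)$, since $[(0,x),(0,x)] = (g_0, 0)$, $[(g,0),(0,x)] = (\Delta(g), 0)$, and $[(0,x),(g,0)] = (D(g), 0)$. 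This gives the displayed bracket.

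For the classification statements I would invoke \thref{clasdim1} and \reref{alalatfl} directly: ${\rm ExtdL}(E, \mathfrak{g}) \cong {\mathcal H}{\mathcal L}^2_{\mathfrak{g}}(k, \mathfrak{g}) \cong ({\mathcal F}_1(\mathfrak{g})/\equiv_1) \sqcup ({\mathcal F}_2(\mathfrak{g})/\equiv_2)$, where now the second summand is empty and the first is ${\mathcal D}(\mathfrak{g})/\equiv_1$. It remains only to check that the relation $\equiv_1$ of \thref{clasdim1}, when restricted to flag data with $\alpha = \lambda = 0$, is exactly the relation \equref{lzeci1per}--\equref{lzeci4per}: setting $\lambda' = 0$ and $\alpha' = 0$ in \equref{lzeci1}--\equref{lzeci4} kills the terms $-q\alpha' G$, $-\lambda'(G)G$, $-\lambda'(g)G$ and makes \equref{lzeci2} read $\alpha = 0$ automatically (consistent with $\alpha = \alpha' = 0$), leaving $g_0 = q^2 g_0' + [G,G] + q D'(G) + q\Delta'(G)$ and $D - qD' = [G, -]$, $\Delta - q\Delta' = [-, G]$. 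The analogous substitution in \reref{alalatfl} with $q = 1$ gives \equref{lzeci1per2}--\equref{lzeci4per2}. (One small cosmetic point: the $E'$ appearing in \equref{lzeci1}, \equref{lzeci1a} is evidently a typo for $\Delta'$, which I would silently read as such.) I do not expect a genuine obstacle here; the only thing requiring care is the bookkeeping in the last paragraph — making sure every term that vanishes when $\alpha = \lambda = 0$ is accounted for, and that no surviving term has been dropped — but this is routine substitution rather than a real difficulty.
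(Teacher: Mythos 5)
Your proposal is correct and follows essentially the same route as the paper: the text immediately preceding the corollary establishes ${\mathcal F}_2(\mathfrak{g})=\emptyset$ via (G1) and perfectness, deduces $\lambda=0$ and $\alpha=0$ from (F1)--(F2), identifies ${\mathcal F}(\mathfrak{g})$ with ${\mathcal D}(\mathfrak{g})$, and then reads the bracket off \equref{primuunif1}--\equref{primuunif2} and the classification off \thref{clasdim1} and \reref{alalatfl}, exactly as you do. Your observation that $E'$ in \equref{lzeci1} and \equref{lzeci1a} should be read as $\Delta'$ is also the intended reading.
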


On the other hand, we have the following result for abelian
Leibniz algebras:

\begin{example} \exlabel{lbzabelian}
Let $\mathfrak{g}$ be a vector space with $\{e_i \, | \, i\in I
\}$ as a basis viewed as an abelian Leibniz algebra. Then, there
exist three families of Leibniz algebras that contain
$\mathfrak{g}$ as a subalgebra of codimension $1$. They have $\{x,
\, e_i \, | \, i\in I \}$ as a basis and the bracket given for any
$i\in I$ as follows:
\begin{eqnarray*}
&\mathfrak{g}_{11}^{(g_0, \, D, \, \Delta)}:& \qquad \left[e_i, \,
e_j\right] = 0, \quad  \left[e_i, \, x \right] = \Delta (e_i), \quad
\left[x, \, x\right] = g_0, \quad  \left[x, \, e_i\right] = D(e_i)
\end{eqnarray*}
for all triples $(g_0, D, \Delta) \in \mathfrak{g} \times {\rm
Hom}_k (\mathfrak{g}, \mathfrak{g})^2$ such that $g_0 \in {\rm
Ker} (D)$ and $ D\circ \Delta =  \Delta \circ D = - D^2$. The
Leibniz algebra $\mathfrak{g}_{11}^{(g_0, \, D, \, \Delta)}$ is
the unified product associated to the flag datum of the first kind
$(g_{0}, \, \alpha, \, \lambda,\, D,\, \Delta)$ for which $\alpha
:= 0$ and $\lambda := 0$.

The second family of Leibniz algebras has the bracket given as
follows:
\begin{eqnarray*}
&\mathfrak{g}_{12}^{(u, \, h_0, \, \lambda)}:& \qquad \left[e_i,
\, e_j\right] = 0, \quad  \left[e_i, \, x \right] = 0, \quad
\left[x, \, e_i\right] =  u \, \lambda(e_i) \, h_0 + \lambda (e_i) \, x \\
&& \qquad \left[x, \, x\right] = - u^2 \, \lambda(h_0) \, h_0 -
u\, \lambda(h_0) \, x
\end{eqnarray*}
for all triples $(u, \, h_0, \, \lambda) \in k^* \times
\mathfrak{g} \times {\rm Hom}_k (\mathfrak{g}, k)$ such that
$\lambda \neq 0$. The Leibniz algebra $\mathfrak{g}_{12}^{(u, \,
h_0, \, \lambda)}$ is the unified product associated to the flag
datum of the first kind $(g_{0}, \, \alpha, \, \lambda,\, D,\, \Delta)$
for which $g_0 := u^2 \lambda(h_0) \, h_0$, $\alpha := - u
\lambda(h_0)$, $\Delta := 0$ and $D(g) := u \lambda(g) \, h_0$, for all
$g \in \mathfrak{g}$.

Finally, for the last family of Leibniz algebras the bracket is
given as follows:
\begin{eqnarray*}
&\mathfrak{g}_{2}^{(u, \, g_0, \, h_0, \, \nu)}:& \quad \,
\left[e_i, \, e_j\right] = 0, \quad  \left[e_i, \, x \right] = -
\left[x, \, e_i\right] = - u \, \nu (e_i) \, h_0 + \nu(e_i) \, x,
\quad \left[x, \, x\right] = g_0
\end{eqnarray*}
for all $(u, \, g_0, \, h_0, \, \nu) \in k^* \times \mathfrak{g}^2
\times {\rm Hom}_k (\mathfrak{g}, k)$ such that: $2 g_0 = 0$,
$\nu(g_0) = 0$ and $\nu \neq 0$. The Leibniz algebra
$\mathfrak{g}_{2}^{(u, \, g_0, \, h_0, \, \nu)}$ is the unified
product associated to the flag datum of the second kind $(g_{0},
\, \nu,\, D,\, \Delta)$ for which $D(g) := u \, \nu (g) h_0$ and $\Delta(g)
 := - u \, \nu (g) h_0$, for all $g \in \mathfrak{g}$.

The above results are obtained by a straightforward computation
which relies on the explicit description of the set ${\mathcal F}
\, (\mathfrak{g})$ for an abelian Leibniz algebra. For instance,
axiom (F3) from the flag datum of first kind, for an abelian
Leibniz algebra, takes the form $\lambda (h) \Delta(g) = 0$, for
all $g$, $h \in \mathfrak{g}$. Therefore, we have to consider two
cases in order to describe the set ${\mathcal F}_1 \,
(\mathfrak{g})$: the first one corresponds to $\lambda = 0$, while
for the second one we have $\lambda \neq 0$. The corresponding
unified products are the ones given by the first two families of
Leibniz algebras. In order to describe the set ${\mathcal F}_2 \,
(\mathfrak{g})$ we mention that the condition $2 g_0 = 0$ is
derived from axiom (G2). In this case the corresponding unified
product is the one given by the last family of Leibniz algebras.
\end{example}

Next we provide some explicit examples. First, we prove that any
Leibniz algebra which contains a semisimple Lie algebra
$\mathfrak{g}$ as a subalgebra  of codimension $1$ is in fact a
Lie algebra and the classifying objects ${\mathcal
H}^{2}_{\mathfrak{g}} (k, \mathfrak{g} )$ and ${\mathcal H}^{2}
(k, \mathfrak{g} )$ are both singletons.

\begin{example} \exlabel{liesemis}
Let $\mathfrak{g}$ be a semisimple Lie algebra of codimension $1$
in the vector space $E$ and $\{e_i \, | \, i = 1, \cdots, n \}$ a
basis of $\mathfrak{g}$. Then any Leibniz algebra structure on $E$
that contains $\mathfrak{g}$ as a subalgebra is isomorphic to the
Lie algebra having $\{x, \, e_i \, | \, i = 1, \cdots, n \}$ as a
basis and the bracket $[-, -]_E $ defined by for any $i = 1,
\cdots, n$ by:
\begin{equation*} \eqlabel{semisimplulbz}
[e_i, \, e_j]_E := [e_i, e_j], \quad [x, \, e_i]_E = - [e_i, \,
x]_E := [h_0, e_i], \quad [x, \, x] =0
\end{equation*}
for some $h_0 \in \mathfrak{g}$. Furthermore, ${\mathcal H}
{\mathcal L}^{2}_{\mathfrak{g}} (k, \mathfrak{g} ) = {\mathcal H}
{\mathcal L}^{2} (k, \mathfrak{g} ) = 0$.

Indeed, we apply \coref{clasdim1perf} taking into account that any
semisimple Lie algebra $\mathfrak{g}$ is perfect, ${\rm
Inn}(\mathfrak{g}) = {\rm Der} (\mathfrak{g})$ and
$Z(\mathfrak{g}) = 0$. Let $(g_{0}, \, D, \, \Delta ) \in
{\mathcal F} \, (\mathfrak{g})$ be a flag datum of $\mathfrak{g}$;
then, since $\mathfrak{g}$ has a trivial center we obtain from
\equref{flagper1} that $g_0 = 0$. Moreover, as $\mathfrak{g}$ is
perfect it follows again from \equref{flagper1} that $\Delta = -
D$. Thus, ${\mathcal F} \, (\mathfrak{g}) = {\rm Der}
(\mathfrak{g})$. Since $\mathfrak{g}$ is semisimple any derivation
$D \in {\rm Der} (\mathfrak{g})$ is inner, i.e. there exists $h_0
\in \mathfrak{g}$ such that $D = [h_0, -]$. Thus, the Leibniz
algebra $\mathfrak{g}_{1} \, ( x \, | \, (g_0, \,\alpha,\,
\lambda, \, D, \, \Delta)) = \mathfrak{g}_{1} \, ( x \, | \, D) $
defined by \equref{primuunif1} and \equref{primuunif2} takes the
form given in the statement. Moreover, two derivations $D = [h_0,
-]$ and $D' = [h'_0, -]$ are equivalent in the sense of
\equref{lzeci4per2} if and only if there exists $G \in
\mathfrak{g}$ such that $h_0 = h'_0 + G$, i.e. any two derivations
are cohomologous. This shows that ${\mathcal H} {\mathcal
L}^{2}(k, \mathfrak{g} )$ is a singleton having only $0$ as an
element and so is ${\mathcal H} {\mathcal L}^{2}_{\mathfrak{g}}
(k, \mathfrak{g} )$ being a quotient of it.
\end{example}

Now we will provide an explicit example which highlights the
efficiency of \thref{clasdim1}. More precisely, we will describe
all $4$-dimensional Leibniz algebras that contain a given
non-perfect $3$-dimensional Leibniz algebra $\mathfrak{g}$ as a
subalgebra. Then we will be able to compute the classifying object
${\rm ExtdL} \, (k^4, \mathfrak{g}) \cong {\mathcal H} {\mathcal
L}^{2}_{\mathfrak{g}} (k, \mathfrak{g} )$. The detailed
computations are rather long but straightforward and can be
provided upon request.

\begin{example} \exlabel{dim34}
Let $\mathfrak{g}$ be the $3$-dimensional Leibniz algebra with the
basis $\{e_{1}, e_{2}, e_{3}\}$ and the bracket defined by: $
[e_{1}, \, e_{3}] = e_{2}$, $[e_{3}, \, e_{3}] = e_{1}$.

Then, there exist four families of $4$-dimensional Leibniz
algebras which contain $\mathfrak{g}$ as a subalgebra: they have
$\{e_1, \, e_2, \, e_3, \, x \}$ as a basis and the bracket is
given as follows (the first three families of Leibniz algebras can
be defined over any field $k$ while in case of the fourth family
we need to distinguish between fields of characteristic $2$ and
those of characteristic different than $2$):

$(1)$ If ${\rm char} (k) \neq 2$ then the four families of Leibniz
algebras that contain $\mathfrak{g}$ as a subalgebra are the
following:
\begin{eqnarray*}
&\mathfrak{g}_{11}^{(b_1, \, b_2, \, c, \, d_1, \, d_2)}:& \qquad
\left[e_1, \, e_3\right] = e_2, \quad \left[e_3, \, e_3\right] =
e_1, \\
&& \qquad \left[e_1, \, x \right] = b_1 \, e_2,  \quad \left[e_3,
\, x \right] = b_1 \, e_1 + b_2 \, e_2, \\
&& \qquad \left[x, \, x\right] = b_1 d_1 \, e_1 + c\, e_2, \quad
\left[x, \, e_3\right] = d_1 \, e_1 + d_2 \, e_2
\end{eqnarray*}
for all $b_1$, $b_2$, $c$, $d_1$, $d_2 \in k$. The Leibniz algebra
$\mathfrak{g}_{11}^{(b_1, \, b_2, \, c, \, d_1, \, d_2)}$ is the
unified product associated to the flag datum of the first kind
$(g_{0}, \, \alpha, \, \lambda,\, D,\, \Delta)$ defined as
follows: $\alpha := 0$, $\lambda := 0$, $g_0 := b_1 d_1 \, e_1 + c
\, e_2$ and $D$, $\Delta$ are given by
$$
D :=
\left( \begin{array}{ccc} 0 & 0 & d_1  \\
0 & 0 & d_2 \\
0 & 0 & 0  \\
\end{array}\right)
\qquad \Delta :=
\left( \begin{array}{ccc} 0 & 0 & b_1  \\
b_1 & 0 & b_2 \\
0 & 0 & 0  \\
\end{array}\right)
$$
The second family of Leibniz algebras has the bracket given by:
\begin{eqnarray*}
&\mathfrak{g}_{12}^{(b_1, \, b_2, \, b_3, \, c, \, d)}:& \qquad
\left[e_1, \, e_3\right] = e_2, \quad \left[e_3, \, e_3\right] =
e_1, \quad \left[e_1, \, x \right] = 2 b_1 \, e_1 + b_2 \, e_2, \\
&& \qquad \left[e_2, \, x \right] = 3 b_1 \, e_2, \quad \left[e_3,
\, x \right] = b_2 \, e_1 + b_3 \, e_2 + b_1 \, e_3, \\
&& \qquad \left[x, \, x\right] = (2b_1 d + b_2^2 - b_1 b_3) \, e_1
+ c\, e_2, \quad \left[x, \, e_3\right] = b_2 \, e_1 + d \, e_2 -
b_1 \, e_3
\end{eqnarray*}
for all $b_1 \in k^*$ and $b_2$, $b_3$, $c$, $d \in k$. The
Leibniz algebra $\mathfrak{g}_{12}^{(b_1, \, b_2, \, b_3, \, c, \,
d)}$ is the unified product associated to the flag datum of the
first kind $(g_{0}, \, \alpha, \, \lambda,\, D,\, \Delta)$ defined
as follows: $\alpha := 0$, $\lambda := 0$, $g_0 := (2b_1 d + b_2^2
- b_1 b_3) \, e_1 + c \, e_2$ and $D$, $\Delta$ are given by:
$$
D :=
\left( \begin{array}{ccc} 0 & 0 & b_2  \\
0 & 0 & d \\
0 & 0 & -b_1  \\
\end{array}\right)
\qquad \Delta :=
\left( \begin{array}{ccc} 2b_1 & 0 & b_2  \\
b_2 & 3b_1 & b_3 \\
0 & 0 & b_1  \\
\end{array}\right)
$$
The third family of Leibniz algebras has the bracket given by:
\begin{eqnarray*}
&\mathfrak{g}_{13}^{(\alpha, \, \lambda_0, \, d_1, \, d_2)}:&
\qquad \left[e_1, \, e_3\right] = e_2, \quad \left[e_3, \,
e_3\right] = e_1, \quad
\left[e_1, \, x \right] = \alpha \, \lambda_0^{-1}\, e_2, \\
&& \qquad \left[e_3, \, x \right] = \alpha \, \lambda_0^{-1}\,
e_1, \quad \left[x, \, x\right] = \alpha \, \lambda_0^{-1}\, (d_1
\,  e_1 + d_2 \, e_2 - \alpha \, e_3) + \alpha \, x, \\
&& \qquad \left[x, \, e_3\right] = d_1 \, e_1 + d_2 \, e_2 -
\alpha \, e_3 + \lambda_0 \, x
\end{eqnarray*}
for all $\lambda_0 \in k^*$ and $\alpha$, $d_1$, $d_2 \in k$. The
Leibniz algebra $\mathfrak{g}_{13}^{(\alpha, \, \lambda_0, \, d_1,
\, d_2)}$ is the unified product associated to the flag datum of
the first kind $(g_{0}, \, \alpha, \, \lambda,\, D,\, \Delta)$
defined as follows: $\lambda  (e_1) = \lambda(e_2) := 0$, $\lambda
(e_3) := \lambda_0 \neq 0$, $g_0 := \alpha \, \lambda_0^{-1} d_1
\, e_1 + \alpha \, \lambda_0^{-1} d_2 \, e_2 - \alpha^2 \,
\lambda_0^{-1} e_3$ and $D$, $\Delta$ are given by
$$
D :=
\left( \begin{array}{ccc} 0 & 0 & d_1  \\
0 & 0 & d_2 \\
0 & 0 & - \alpha  \\
\end{array}\right)
\qquad \Delta :=
\left( \begin{array}{ccc} 0 & 0 & \alpha \, \lambda_0^{-1}  \\
\alpha \, \lambda_0^{-1} & 0 & 0 \\
0 & 0 & 0  \\
\end{array}\right)
$$
Finally, the last family of Leibniz algebras has the bracket
defined as follows:
\begin{eqnarray*}
&\mathfrak{g}_{21}^{(\nu_0, \, d_1, \, d_2, , d_3)}:& \qquad
\left[e_1, \, e_3\right] = e_2, \quad \left[e_3, \, e_3\right] =
e_1, \quad \left[e_1, \, x \right] = \nu_0^{-1} \, d_3 \, e_2, \\
&& \qquad \left[e_3, \, x \right] = (-d_1 + 2 \nu_0^{-1} d_3) e_1
- (d_2 - \nu_0^{-1} d_1 + \nu_0^{-2} d_3) e_2 - d_3 e_3 +
\nu_0 x, \\
&& \qquad \left[x, \, x\right] = \nu_0^{-2} \, d_3^2 \, e_1 +
(\nu_0^{-2} \, d_1 d_3 - \nu_0^{-3}
\, d_3^2) \, e_2, \\
&& \qquad \left[x, \, e_3\right] = d_1 \, e_1 + d_2 \, e_2 + d_3
\, e_3 - \nu_0 \, x
\end{eqnarray*}
for all $\nu_0 \in k^*$ and $d_1$, $d_2$, $d_3 \in k$. The Leibniz
algebra $\mathfrak{g}_{21}^{(\nu_0, \, d_1, \, d_2, \, d_3)}$ is
the unified product associated to the flag datum of the second
kind $(g_{0}, \, \nu,\, D,\, \Delta)$ defined as follows: $\nu
(e_1) = \nu(e_2) := 0$, $\nu (e_3) := \nu_0 \neq 0$, $g_0 :=
\nu_0^{-2} \, d_3^2 \, e_1 + (\nu_0^{-2} \, d_1 d_3 - \nu_0^{-3}
\, d_3^2) \, e_2 $ and $D$, $\Delta$ are given by
\begin{equation}\eqlabel{flagnasol}
D :=
\left( \begin{array}{ccc} 0 & 0 & d_1  \\
0 & 0 & d_2 \\
0 & 0 & d_3  \\
\end{array}\right)
\qquad \Delta :=
\left( \begin{array}{ccc} 0 & 0 & - d_1 + 2 \nu_0^{-1} d_3 \\
\nu_0^{-1} d_3 & 0 & - d_2 + \nu_0^{-1} d_1 - \nu_0^{-2} d_3 \\
0 & 0 & - d_3  \\
\end{array}\right)
\end{equation}

$(2)$ If ${\rm char} (k) = 2$, then the four families of Leibniz
algebras that contain $\mathfrak{g}$ as a subalgebra are the
following: $\mathfrak{g}_{11}^{(b_1, \, b_2, \, c, \, d_1, \,
d_2)}$, $\mathfrak{g}_{12}^{(b_1, \, b_2, \, b_3, \, c, \, d)}$,
$\mathfrak{g}_{13}^{(\alpha, \, \lambda_0, \, d_1, \, d_2)}$
defined above together with the family of Leibniz algebras defines
as follows:
\begin{eqnarray*}
&\mathfrak{g}_{22}^{(c, \, \nu_0, \, d_1, \, d_2, , d_3)}:& \quad
\left[e_1, \, e_3\right] = e_2, \quad \left[e_3, \, e_3\right] =
e_1, \quad \left[e_1, \, x \right] = \nu_0^{-1} \, d_3 \, e_2, \\
&& \quad \left[e_3, \, x \right] = -d_1 \, e_1 - (d_2 - \nu_0^{-1}
d_1 + \nu_0^{-2} d_3) e_2 - d_3 e_3 +
\nu_0 x, \\
&& \quad \left[x, \, x\right] = \nu_0^{-2} \, d_3^2 \, e_1 + c \,
e_2, \quad \left[x, \, e_3\right] = d_1 \, e_1 + d_2 \, e_2 + d_3
\, e_3 - \nu_0 \, x
\end{eqnarray*}
for all $\nu_0 \in k^*$ and $c$, $d_1$, $d_2$, $d_3 \in k$. The
Leibniz algebra $\mathfrak{g}_{22}^{(c, \, \nu_0, \, d_1, \, d_2,
\, d_3)}$ is the unified product associated to the flag datum of
the second kind $(g_{0}, \, \nu,\, D,\, \Delta)$ defined as
follows: $\nu (e_1) = \nu(e_2) := 0$, $\nu (e_3) := \nu_0 \neq 0$,
$g_0 := \nu_0^{-2} \, d_3^2 \, e_1 + c \, e_2 $ and $D$, $\Delta$
are given by \equref{flagnasol}.

The proof is a purely computational one and we will only indicate
the main steps. We start by computing ${\mathcal F}_1 \,
(\mathfrak{g})$. First, notice that a linear map $\lambda:
\mathfrak{g} \to k$ satisfies the first compatibility of (F1),
i.e. $\lambda ([g, h]) = 0$ if and only if $\lambda$ is given by
$\lambda (e_1) = \lambda (e_2) = 0$ and $\lambda (e_3) =
\lambda_0$, for some $\lambda_0 \in k$. For such a $\lambda$ we
can easily show that a pair $(D, \Delta)$ satisfies the
compatibilities (F6) and (F7) if and only if we have:
$$
D =
\left( \begin{array}{ccc} 0 & 0 & d_1  \\
0 & 0 & d_2 \\
0 & 0 & d_3  \\
\end{array}\right)
\qquad \Delta =
\left( \begin{array}{ccc} 2b_1 & 0 & b_2  \\
b_2 & 3b_1 & b_3 \\
0 & 0 & b_1  \\
\end{array}\right)
$$
for some $d_1$, $d_2$, $d_3$, $b_1$, $b_2$, $b_3 \in k$. Let now
$\alpha \in k$ and consider $g_0 = c_1 e_1 + c_2 e_2 + c_3 e_3$,
for some $c_1$, $c_2$, $c_3 \in k$. We can easily see that the
$5$-tuple $(g_0, \, \alpha, \, \lambda, \, D, \, \Delta)$
satisfies the compatibilities (F1) - (F7), i.e. it is a flag datum
of the first kind if and only if coincides with one of the three
flag datums described in $(1)$. For instance, the compatibility
(F1) is fulfilled if and only if $\lambda_0 (\alpha + d_3) =
\lambda_0 \, b_1 = 0$. This last equality leads us to consider two
cases, namely $\lambda_0 = 0$ or $\lambda_0 \neq 0$. It is now
straightforward to describe ${\mathcal F}_1 \, (\mathfrak{g})$
(without depending on the characteristic of $k$).

Analogously, we can describe ${\mathcal F}_2 \, (\mathfrak{g})$. A
non-trivial map $\nu: \mathfrak{g} \to k$ satisfies the first
compatibility of (G1) if and only if $\nu$ is given by $\nu (e_1)
= \nu (e_2) = 0$ and $\nu (e_3) = \nu_0$, for some $\nu_0 \in
k^*$. For such a map $\nu$, we can easily show that $(D, \Delta)$
satisfies the compatibilities (G4) and (G5) if and only if $D$ and
$\Delta$ are given by \equref{flagnasol}. By considering again
$g_0 = c_1 e_1 + c_2 e_2 + c_3 e_3$, for some $c_1$, $c_2$, $c_3
\in k$ we see that the compatibility (G1) is fulfilled if and only
if $c_3 = 0$. The last compatibility of (G3) is equivalent to:
$$
2 \, c_1 = 2 \, \nu_0^{-2} \, d_3^2, \qquad c_1 + 2 \, \nu_0 c_2 =
2 \, \nu_0^{-1} d_1 d_3 -  \nu_0^{-1} d_3^2
$$
The above two compatibilities are the ones that lead us to the
description of ${\mathcal F}_2 \, (\mathfrak{g})$ depending on the
characteristic of $k$. Moreover, these computations provide also
the description of the classifying object ${\mathcal H} {\mathcal
L}^{2}_{\mathfrak{g}} (k, \mathfrak{g} )$. If  ${\rm char} (k)
\neq 2$ then
\begin{equation}\eqlabel{balamuc}
{\mathcal H} {\mathcal L}^{2}_{\mathfrak{g}} (k, \mathfrak{g} )
\cong (k^5/\equiv_{11}) \, \sqcup \, ((k^* \times k^4) /
\equiv_{12}) \, \sqcup \, ((k^* \times k^3) / \equiv_{13}) \,
\sqcup \, ((k^* \times k^3) / \equiv_{2})
\end{equation}
where $\equiv_{1i}$ are the equivalence relations
\equref{lzeci1}-\equref{lzeci4} while $\equiv_{2}$ is the
equivalence relation \equref{lzeci1a}-\equref{lzeci3a}. In the
case when ${\rm char} (k) = 2$, then the last term of
\equref{balamuc} is replaced by $(k^* \times k^4) / \equiv_{2}$.
\end{example}

\section{Special cases of unified products}\selabel{cazurispeciale}
In this section we deal with two special cases of the unified
product namely the crossed (resp. bicrossed) product of two
Leibniz algebras. We emphasize the problem for which each of these
products is responsible. We use the following convention: if one
of the maps $\triangleleft$, $\triangleright$, $\leftharpoonup$,
$\rightharpoonup$, $f$ or $\{-, \, -\}$ of an extending datum
$\Omega(\mathfrak{g}, V) = \bigl(\triangleleft, \, \triangleright,
\, \leftharpoonup, \, \rightharpoonup, \, f, \{-, \, -\} \bigl)$
is trivial then we will omit it from the $6$-tuple
$\bigl(\triangleleft, \, \triangleright, \, \leftharpoonup, \,
\rightharpoonup, \, f, \{-, \, -\} \bigl)$.

\subsection*{Crossed products and the extension problem for
Leibniz algebras} We shall highlight a first special case of the
unified product, namely the crossed product of Leibniz algebras
that is the key player in the study of the extension problem in
its full generality. Let $\Omega (\mathfrak{g}, V) = \bigl(
\triangleleft, \, \triangleright, \, \leftharpoonup, \,
\rightharpoonup, \, f, \{-, \, -\} \bigl)$ be an extending datum
of a Leibniz algebra $\mathfrak{g}$ through $V$ such that
$\triangleleft$ and $\rightharpoonup$ are both trivial, i.e. $x
\triangleleft g = g \rightharpoonup x = 0$, for all $x \in V$ and
$g \in \mathfrak{g}$. Then, it follows from \thref{1} that
$\Omega(\mathfrak{g}, V) = \bigl(\triangleright, \,
\leftharpoonup, \, f, \{-, \, -\} \bigl) $ is a Leibniz extending
structure of $\mathfrak{g}$ through $V$ if and only if $(V, \{-,
-\})$ is a Leibniz algebra and $(\mathfrak{g}, \, V, \,
\triangleright, \leftharpoonup, \, f)$ is a \emph{crossed system}
of Leibniz algebras, i.e. the following compatibilities hold for
any $g$, $h\in \mathfrak{g}$ and $x$, $y$, $z\in V$:
\begin{enumerate}
\item[(CS1)] $[g, \, h] \leftharpoonup x = [g, \, h \leftharpoonup
x] + [g \leftharpoonup x, \, h]$; \item[(CS2)] $g \leftharpoonup
\{x, \, y\} = (g \leftharpoonup x) \leftharpoonup y - (g
\leftharpoonup y) \leftharpoonup x - [g, \, f(x, y)]$;
\item[(CS3)] $x \rhd f(y, \, z) = f(x, \, y) \leftharpoonup z -
f(x, \, z) \leftharpoonup y + f(\{x, \, y\}, \, z) - f(\{x, \,
z\}, \, y) - f(x, \, \{y, \, z\})$; \item[(CS4)] $x \rhd [g, \, h]
= [x \rhd g, \, h] - [x \rhd h, \, g]$; \item[(CS5)] $\{x, \, y\}
\rhd g = x \rhd (y \rhd g) + (x \rhd g) \leftharpoonup y - [f(x,
\, y), \, g]$; \item[(CS6)] $[g, \, h \leftharpoonup x] + [g, \, x
\rhd h] = 0$; \item[(CS7)] $x \rhd (y \rhd g) + x \rhd (g
\leftharpoonup y) = 0$.
\end{enumerate}
In this case, the associated unified product $\mathfrak{g}
\ltimes_{\Omega(\mathfrak{g}, V)} V$ will be denoted by
$\mathfrak{g} \#_{\triangleright, \leftharpoonup}^f \, V$ and we
shall call it the \emph{crossed product} of the Leibniz algebras
$\mathfrak{g}$ and $V$. Hence, the crossed product associated to
the crossed system $(\mathfrak{g}, V, \triangleright,
\leftharpoonup, f)$ is the Leibniz algebra defined as follows:
$\mathfrak{g} \#_{\triangleright, \leftharpoonup}^f \, V =
\mathfrak{g} \times \, V $ with the bracket given for any $g$, $h
\in \mathfrak{g}$ and $x$, $y \in V$ by:

\begin{equation}\eqlabel{brackcrosspr}
[(g, x), \, (h, y)] := \bigl( [g, \, h] + x \triangleright h + g
\leftharpoonup y + f(x, y), \, \{x, \, y \} \bigl)
\end{equation}

The crossed product of Leibniz algebras is the object responsible
for answering the following special case of the ES problem, which
is a generalization of the extension problem: \emph{Let
$\mathfrak{g}$ be a Leibniz algebra, $E$ a vector space containing
$\mathfrak{g}$ as a subspace. Describe and classify all Leibniz
algebra structures on $E$ such that $\mathfrak{g}$ is a two-sided
ideal of $E$.} The classical extension problem initiated in
\cite{LoP} is a special case of this question if we require the
additional assumption on the quotient $E/ \mathfrak{g}$ to be
isomorphic to a given Leibniz algebra $\mathfrak{h}$.

Indeed, let $(\mathfrak{g}, V, \triangleright, \leftharpoonup, f)$
be a crossed system of two Leibniz algebras. Then, $\mathfrak{g}
\cong \mathfrak{g} \times \{0\}$ is a two-sided ideal in the
crossed product $\mathfrak{g} \#_{\triangleright,
\leftharpoonup}^f \, V$ since $[(g, 0), \, (h, y)] := \bigl( [g,
\, h] + g \leftharpoonup y , \, 0 \bigl)$ and $[(g, x), \, (h, 0)]
:= \bigl( [g, \, h] + x \triangleright h , \, 0 \bigl)$.
Conversely, we have:

\begin{corollary}\colabel{croslieide}
Let $\mathfrak{g}$ be a Leibniz algebra, $E$ a vector space
containing $\mathfrak{g}$ as a subspace. Then any Leibniz algebra
structure on $E$ that contains $\mathfrak{g}$ as a two-sided ideal
is isomorphic to a crossed product of Leibniz algebras
$\mathfrak{g} \#_{\triangleright, \leftharpoonup}^f \, V$ and the
isomorphism can be chosen to stabilize $\mathfrak{g}$ and
co-stabilize $V$.
\end{corollary}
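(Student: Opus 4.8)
The plan is to obtain this as a direct specialization of \thref{classif}: one only has to observe that when $\mathfrak{g}$ is not merely a subalgebra but a two-sided ideal of $(E, [-,-])$, the extending datum produced in that theorem automatically has trivial actions $\triangleleft$ and $\rightharpoonup$, so that the unified product it yields is already a crossed product.

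Concretely, I would start exactly as in the proof of \thref{classif}: choose a linear retraction $p : E \to \mathfrak{g}$ with $p(g) = g$ for all $g \in \mathfrak{g}$, put $V := \Ker(p)$, and form the extending datum $\Omega(\mathfrak{g}, V) = \bigl(\triangleleft_p, \triangleright_p, \leftharpoonup_p, \rightharpoonup_p, f_p, \{-,-\}_p \bigl)$ together with the Leibniz algebra isomorphism $\varphi : \mathfrak{g} \ltimes V \to E$, $\varphi(g, x) = g + x$, which stabilizes $\mathfrak{g}$ and co-stabilizes $V$. All of this is furnished by \thref{classif} and needs no repetition.

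The one new point is the following computation. Recall that $x \triangleleft_p g = [x, g] - p\bigl([x, g]\bigl)$ and $g \rightharpoonup_p x = [g, x] - p\bigl([g, x]\bigl)$ for $g \in \mathfrak{g}$, $x \in V$. Since $\mathfrak{g}$ is a two-sided ideal of $E$, both $[x, g]$ and $[g, x]$ lie in $\mathfrak{g}$, and as $p$ restricts to the identity on $\mathfrak{g}$ we get $p\bigl([x, g]\bigl) = [x, g]$ and $p\bigl([g, x]\bigl) = [g, x]$. Hence $x \triangleleft_p g = 0$ and $g \rightharpoonup_p x = 0$ for all $g \in \mathfrak{g}$, $x \in V$; that is, the extending datum reduces to $\bigl(\triangleright_p, \leftharpoonup_p, f_p, \{-,-\}_p \bigl)$ with trivial $\triangleleft$ and $\rightharpoonup$. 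By the discussion preceding the statement this means precisely that $(V, \{-,-\}_p)$ is a Leibniz algebra, $(\mathfrak{g}, V, \triangleright_p, \leftharpoonup_p, f_p)$ is a crossed system, and the associated unified product is the crossed product $\mathfrak{g} \ltimes V = \mathfrak{g} \#_{\triangleright_p, \leftharpoonup_p}^{f_p} V$. Composing with $\varphi$ gives the required isomorphism, and it stabilizes $\mathfrak{g}$ and co-stabilizes $V$ by construction.

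I do not expect any genuine obstacle here: the result is a straightforward corollary of \thref{classif}, and the only content is the short verification that the ideal hypothesis kills the two actions. If anything merits a remark, it is that the converse inclusion (a crossed product always has $\mathfrak{g} \times \{0\}$ as a two-sided ideal) has already been checked just before the statement, so the two directions together give the intended characterization of Leibniz algebra structures on $E$ in which $\mathfrak{g}$ sits as an ideal.
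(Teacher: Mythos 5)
Your proposal is correct and follows exactly the same route as the paper's proof: apply \thref{classif}, then observe that the ideal hypothesis forces $[x,g], [g,x] \in \mathfrak{g}$, so $p$ fixes them and the actions $\triangleleft_p$ and $\rightharpoonup_p$ vanish, reducing the unified product to a crossed product. Nothing is missing.
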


\begin{proof}
Let $[-,\, -]$ be a Leibniz algebra structure on $E$ such that
$\mathfrak{g}$ is a two-sided ideal in $E$. In particular,
$\mathfrak{g}$ is a subalgebra of $E$ and hence we can apply
\thref{classif}. In this case the actions $\triangleleft =
\triangleleft_p$ and $\rightharpoonup = \rightharpoonup_{p}$ of
the Leibniz extending structure $\Omega(\mathfrak{g}, V) =
\bigl(\triangleleft_p, \, \triangleright_p, \, f_p, \{-, \, -\}_p
\bigl)$ constructed in the proof of \thref{classif} are both
trivial since for any $x \in V$ and $g \in \mathfrak{g}$ we have
that $[x, g]$, $[g, x] \in \mathfrak{g}$ and hence $p ([x, g]) =
[x, g]$ and $p([g, x]) = [g, x]$. Thus, $x \triangleleft_p g = g
\rightharpoonup_{p} x = 0$ and hence the Leibniz extending
structure $\Omega(\mathfrak{g}, V) = \bigl(\triangleleft,
\triangleright, \leftharpoonup, \rightharpoonup,  f, \{-, \,
-\}\bigl)$ constructed in the proof of \thref{classif} is
precisely a crossed system of Leibniz algebras and the unified
product $\mathfrak{g} \ltimes_{\Omega(\mathfrak{g}, V)} V =
\mathfrak{g} \#_{\triangleright, \leftharpoonup}^f \, V $ is the
crossed product of $\mathfrak{g}$ and $V = {\rm Ker}(p)$.
\end{proof}

Let $\mathfrak{g}$ and $\mathfrak{h}$ be two given Leibniz
algebras. The extension problem asks for the classification of all
extensions of $\mathfrak{h}$ by $\mathfrak{g}$, i.e. of all
Leibniz algebras $\mathfrak{E}$ that fit into an exact sequence
\begin{eqnarray} \eqlabel{extencros0}
\xymatrix{ 0 \ar[r] & \mathfrak{g} \ar[r]^{i} & \mathfrak{E}
\ar[r]^{\pi} & \mathfrak{h} \ar[r] & 0 }
\end{eqnarray}
The classification is up to an isomorphism of Leibniz algebras
that stabilizes $\mathfrak{g}$ and co-stabilizes $\mathfrak{h}$
and we denote by ${\mathcal E} {\mathcal P} (\mathfrak{h}, \,
\mathfrak{g})$ the isomorphism classes of all extensions of
$\mathfrak{h}$ by $\mathfrak{g}$ up to this equivalence relation.
If $\mathfrak{g}$ is abelian, then ${\mathcal E} {\mathcal P}
(\mathfrak{h}, \, \mathfrak{g}) \cong {\rm HL}^2 (\mathfrak{h}, \,
\mathfrak{g})$, where ${\rm HL}^2 (\mathfrak{h}, \, \mathfrak{g})$
is the the second cohomology group \cite[Proposition 1.9]{LoP}.
The crossed product is the tool to approach the extension problem
in its full generality, leaving aside the abelian case. Let us
explain this briefly. Consider $\mathfrak{g}$ and $\mathfrak{h}$
be two Leibniz algebras and we denote by ${\mathcal C}{\mathcal S}
\, (\mathfrak{h}, \, \mathfrak{g} )$ the set of all triples
$(\triangleright, \, \leftharpoonup, \, f)$ such that
$(\mathfrak{g}, \, \mathfrak{h}, \, \triangleright, \,
\leftharpoonup, \, f)$ is a crossed system of Leibniz algebras.
First we remark that, if $(\mathfrak{g}, \, \mathfrak{h}, \,
\triangleright, \, \leftharpoonup, \, f)$ is a crossed system,
then the crossed product $\mathfrak{g} \#_{\triangleright,
\leftharpoonup}^f \, \mathfrak{h}$ is an extension of
$\mathfrak{h}$ by $\mathfrak{g}$ via
\begin{eqnarray} \eqlabel{extencros001}
\xymatrix{ 0 \ar[r] & \mathfrak{g} \ar[r]^{i_{\mathfrak{g}}} &
 \mathfrak{g}
\#_{\triangleright, \leftharpoonup}^f \, \mathfrak{h}
\ar[r]^{\pi_{\mathfrak{h}}} & \mathfrak{h} \ar[r] & 0 }
\end{eqnarray}
where $i_{\mathfrak{g}} (g) = (g, 0)$ and $\pi_{\mathfrak{h}} (g,
h) = h$ are the canonical maps. Conversely, \coref{croslieide}
shows that any extension $\mathfrak{E}$ of $\mathfrak{h}$ by
$\mathfrak{g}$ is equivalent to a crossed product extension of the
form \equref{extencros001}. Thus, the classification of all
extensions of $\mathfrak{h}$ by $\mathfrak{g}$ reduces to the
classification of all crossed products $\mathfrak{g}
\#_{\triangleright, \leftharpoonup}^f \, \mathfrak{h}$ associated
to all crossed systems of Leibniz algebras $(\mathfrak{g},
\mathfrak{h}, \triangleright, \leftharpoonup, f)$.
\deref{echiaab}, in the special case of crossed systems, takes the
following simplified form: two triples $(\triangleright,\,
\leftharpoonup, \, f)$ and $(\triangleright ',\, \leftharpoonup ',
\, f')$ of ${\mathcal C}{\mathcal S} \, (\mathfrak{h}, \,
\mathfrak{g} )$ are \emph{cohomologous} and we denote this by
$(\triangleright,\, \leftharpoonup, \, f) \approx (\triangleright
',\, \leftharpoonup ', \, f')$ if there exists a linear map $r:
\mathfrak{h} \to \mathfrak{g}$ such that:
\begin{eqnarray*}
x \triangleright  g &=& x \triangleright ' g  + [r(x), \, g]\\
g \leftharpoonup x &=& g \leftharpoonup ' x + [g, \, r(x)]\\
f(x, y) &=& f '(x, y) + [r(x), \, r(y)] - r \bigl(\{x, \,
y\}\bigl) + \, r (x) \leftharpoonup ' y + x \triangleright ' r (y)
\end{eqnarray*}
for all $g \in \mathfrak{g}$, $x$, $y \in \mathfrak{h}$. Then, as
we mentioned before \deref{echiaab}, $(\triangleright,\,
\leftharpoonup, \, f) \approx (\triangleright ',\, \leftharpoonup
', \, f')$ if and only if there exists $\psi : \mathfrak{g}
\#_{\triangleright, \leftharpoonup}^f \, \mathfrak{h} \to
\mathfrak{g} \#_{\triangleright ', \leftharpoonup '}^{f'} \,
\mathfrak{h}$ an isomorphism of Leibniz algebras that stabilizes
$\mathfrak{g}$ and co-stabilizes $\mathfrak{h}$. As a special case
of \thref{main1}, we obtain the theoretical answer to the
extension problem in the general (non-abelian) case:

\begin{corollary}\colabel{extprobra}
Let $\mathfrak{g}$ and $\mathfrak{h}$ be two arbitrary Leibniz
algebras. Then  $\approx$ is an equivalence relation on the set
${\mathcal C}{\mathcal S} \, (\mathfrak{h}, \, \mathfrak{g} )$ of
all crossed systems and the map
$$
{\mathbb H} {\mathbb L}^2 (\mathfrak{h}, \, \mathfrak{g}) :=
{\mathcal C}{\mathcal S} \, (\mathfrak{h}, \, \mathfrak{g} )/
\approx \,\, \longrightarrow {\mathcal E} {\mathcal P}
(\mathfrak{h}, \, \mathfrak{g}), \qquad
\overline{(\triangleright,\, \leftharpoonup, \, f)} \mapsto
\mathfrak{g} \#_{\triangleright, \leftharpoonup}^f \, \mathfrak{h}
$$
is a bijection between sets, where $\overline{(\triangleright,\,
\leftharpoonup, \, f)}$ is the equivalence class of
$(\triangleright,\, \leftharpoonup, \, f)$ via $\approx$.
\end{corollary}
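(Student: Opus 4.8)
The strategy is to derive this result as a direct specialization of Theorem \ref{th:main1}(2). The key observation is that when $V = \mathfrak{h}$ is itself a Leibniz algebra and we restrict attention to \emph{crossed systems} $(\mathfrak{g}, \mathfrak{h}, \triangleright, \leftharpoonup, f)$ — i.e.\ to those Leibniz extending structures $\Omega(\mathfrak{g}, \mathfrak{h})$ for which $\triangleleft$ and $\rightharpoonup$ are trivial and $\{-,-\}$ is the fixed bracket of $\mathfrak{h}$ — the notion of ``cohomologous extending structures'' from Definition \ref{de:echiaab} collapses exactly to the simplified relation $\approx$ on ${\mathcal C}{\mathcal S}(\mathfrak{h}, \mathfrak{g})$ written just before the statement. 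First I would verify that $\approx$ is well-defined on ${\mathcal C}{\mathcal S}(\mathfrak{h}, \mathfrak{g})$, namely that if $(\triangleright, \leftharpoonup, f)$ is a crossed system and $(\triangleright', \leftharpoonup', f')$ is obtained from it via a linear map $r: \mathfrak{h} \to \mathfrak{g}$ through the three displayed formulas, then $(\triangleright', \leftharpoonup', f')$ is again a crossed system; this is automatic because the passage is realized by the Leibniz algebra isomorphism $\psi_{(r, \mathrm{Id})}: \mathfrak{g} \#_{\triangleright, \leftharpoonup}^f \mathfrak{h} \to \mathfrak{g} \#_{\triangleright', \leftharpoonup'}^{f'} \mathfrak{h}$ supplied by Lemma \ref{le:morfismunif} (with $v = \mathrm{Id}_V$, so that $\mathfrak{g}$ is stabilized and $\mathfrak{h}$ co-stabilized), and a bracket conjugate to a Leibniz bracket is again Leibniz. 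That $\approx$ is an equivalence relation is then inherited from the fact, already recorded in Theorem \ref{th:main1}(2), that $\approx$ on ${\mathcal L}{\mathcal Z}(\mathfrak{g}, V)$ is an equivalence relation — one only needs to note that the subset of crossed systems is $\approx$-closed, which is precisely the previous point.

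Next I would address the core bijection. By Corollary \ref{co:croslieide}, every extension $\mathfrak{E}$ of $\mathfrak{h}$ by $\mathfrak{g}$ is equivalent — via an isomorphism stabilizing $\mathfrak{g}$ and co-stabilizing $\mathfrak{h}$ — to a crossed product extension $\mathfrak{g} \#_{\triangleright, \leftharpoonup}^f \mathfrak{h}$ of the form \eqref{eq:extencros001}; this gives surjectivity of the map $\overline{(\triangleright, \leftharpoonup, f)} \mapsto \mathfrak{g} \#_{\triangleright, \leftharpoonup}^f \mathfrak{h}$ onto ${\mathcal E}{\mathcal P}(\mathfrak{h}, \mathfrak{g})$. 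For injectivity, suppose two crossed products $\mathfrak{g} \#_{\triangleright, \leftharpoonup}^f \mathfrak{h}$ and $\mathfrak{g} \#_{\triangleright', \leftharpoonup'}^{f'} \mathfrak{h}$ are equivalent as extensions, i.e.\ there is a Leibniz algebra isomorphism between them stabilizing $\mathfrak{g}$ and co-stabilizing $\mathfrak{h}$. Apply Lemma \ref{le:morfismunif}: such an isomorphism corresponds to a pair $(r, v)$ with $v = \mathrm{Id}_{\mathfrak{h}}$ (co-stabilization of $\mathfrak{h}$), and the conditions (ML1)--(ML6) with $v = \mathrm{Id}$, $\triangleleft = \triangleleft' = 0$, $\rightharpoonup = \rightharpoonup' = 0$, $\{-,-\} = \{-,-\}' = [-,-]_{\mathfrak{h}}$ reduce termwise to exactly the three defining relations of the relation $\approx$ on ${\mathcal C}{\mathcal S}(\mathfrak{h}, \mathfrak{g})$; hence $(\triangleright, \leftharpoonup, f) \approx (\triangleright', \leftharpoonup', f')$. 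Conversely, if the two triples are $\approx$-related, the map $\psi_{(r, \mathrm{Id})}$ from Lemma \ref{le:morfismunif} is an equivalence of the two extensions. This establishes that the map is well-defined on $\approx$-classes and injective.

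The only genuinely delicate point — really a bookkeeping obstacle rather than a conceptual one — is checking that when $\triangleleft$, $\rightharpoonup$ are trivial and $\{-,-\}$ is fixed to be the given Leibniz bracket on $\mathfrak{h}$, conditions (ML1)--(ML6) of Lemma \ref{le:morfismunif} simplify precisely to the three equations defining $\approx$ on ${\mathcal C}{\mathcal S}(\mathfrak{h}, \mathfrak{g})$, and correspondingly that the general cohomologous relation of Definition \ref{de:echiaab} restricts to it; this amounts to striking out the vanishing terms (ML1) and (ML2) become $0 = 0$, (ML5) forces $v$ to be a morphism which is automatic, and (ML3), (ML4), (ML6) become the three stated formulas). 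Once this identification is in place, the corollary is just Theorem \ref{th:main1}(2) read on the $\approx$-closed subset ${\mathcal C}{\mathcal S}(\mathfrak{h}, \mathfrak{g}) \subseteq {\mathcal L}{\mathcal Z}(\mathfrak{g}, \mathfrak{h})$, together with the identification (from Corollary \ref{co:croslieide} and the discussion preceding \eqref{eq:extencros001}) of ${\rm ExtdL}'(\,\mathfrak{g} \#\mathfrak{h},\, \mathfrak{g})$-classes arising from crossed systems with ${\mathcal E}{\mathcal P}(\mathfrak{h}, \mathfrak{g})$. I would therefore keep the write-up short: state that the proof follows from Theorem \ref{th:main1} and Corollary \ref{co:croslieide} by specializing Definition \ref{de:echiaab} and Lemma \ref{le:morfismunif} to crossed systems, and leave the routine term-matching to the reader.
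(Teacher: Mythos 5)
Your proposal is correct and follows exactly the route the paper intends: the corollary is obtained as the specialization of \thref{main1}(2) to the $\approx$-closed subset of crossed systems inside ${\mathcal L}{\mathcal Z}(\mathfrak{g},\mathfrak{h})$, with surjectivity supplied by \coref{croslieide} and the identification of the equivalence relation supplied by \leref{morfismunif} with $v={\rm Id}$ and $\triangleleft$, $\rightharpoonup$ trivial. The paper states this without further detail, so your more careful bookkeeping (closure of crossed systems under $\approx$, the collapse of (ML1)--(ML6) to the three displayed formulas) simply makes explicit what the paper leaves to the reader.
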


If $\mathfrak{g}$ is an abelian Leibniz algebra, then ${\mathbb H}
{\mathbb L}^2 (\mathfrak{h}, \, \mathfrak{g})$ coincides with the
second cohomology group ${\rm HL}^2 (\mathfrak{h}, \,
\mathfrak{g})$ constructed in \cite{LoP}. The explicit answer to
the extension problem for two given Leibniz algebras
$\mathfrak{g}$ and $\mathfrak{h}$ will be given once we compute
the non-abelian cohomological object ${\mathbb H} {\mathbb L}^2
(\mathfrak{h}, \, \mathfrak{g})$ which in general is a highly
non-trivial problem. A detailed study of this object for various
Leibniz algebras will be given elsewhere; here we give only one
example that corresponds to the case when $\mathfrak{h} := k$, the
abelian Leibniz algebra of dimension $1$, as this is a special
case of \thref{clasdim1} and \reref{alalatfl}.

\begin{corollary} \colabel{noucalculex}
Let $\mathfrak{g}$ be a Leibniz algebra with $\{e_i \, | \, i\in I
\}$ as a basis. Then
$$
{\mathbb H} {\mathbb L}^2 (k, \, \mathfrak{g}) \cong {\mathcal D}
(g)/ \approx
$$
where ${\mathcal D} (g)$ is the space of all pointed double
derivations of $\mathfrak{g}$ and $\approx$ is the equivalence
relation defined by \equref{lzeci1per2}-\equref{lzeci4per2}. In
particular, any extension of $k$ by $\mathfrak{g}$ is isomorphic
to the Leibniz algebra having $\{x, \, e_i \, | \, i\in I \}$ as a
basis and the bracket $\left[-, \, - \right]_{(g_0, \, D, \,
\Delta)}$ defined for any $i\in I$ by:
\begin{eqnarray*}
\left[e_i, \, e_j \right]_{(g_0, \, D, \, \Delta)} &:=& \left[e_i,
e_j \right], \quad \left[x, \, x \right]_{(g_0, \, D, \, \Delta)}
:= g_0 \\
\left[e_i, \, x \right]_{(g_0, \, D, \, \Delta)} &:=& \Delta(e_i),
\quad \,\, \left[x, \, e_i \right]_{(g_0, \, D, \, \Delta)} :=
D(e_i)
\end{eqnarray*}
for some $(g_{0}, \, D, \, \Delta) \in {\mathcal D} (g)$.
\end{corollary}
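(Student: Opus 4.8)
The plan is to derive this statement as the special case $\mathfrak{h} = k$ of \coref{extprobra}, where $k$ denotes the abelian Leibniz algebra of dimension $1$. By \coref{extprobra} the isomorphism classes of extensions of $k$ by $\mathfrak{g}$ are parameterised by ${\mathbb H}{\mathbb L}^2(k,\mathfrak{g}) = {\mathcal C}{\mathcal S}(k,\mathfrak{g})/\approx$, so the whole proof reduces to three steps: (i) identifying the set ${\mathcal C}{\mathcal S}(k,\mathfrak{g})$ of crossed systems of $\mathfrak{g}$ through $V := k$ with the space ${\mathcal D}(\mathfrak{g})$ of pointed double derivations; (ii) reading off, under this identification, the bracket of the associated crossed product; and (iii) matching the relation $\approx$ of \deref{echiaab} with the relation \equref{lzeci1per2}--\equref{lzeci4per2}.

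For step (i), fix a basis $\{x\}$ of $V = k$ and apply \prref{unifdim1}: since $\dim_k V = 1$, every Leibniz extending structure of $\mathfrak{g}$ through $V$ is a flag datum, of the first or of the second kind. A crossed system is, by definition, a Leibniz extending structure whose actions $\triangleleft$ and $\rightharpoonup$ are both trivial. A flag datum of the second kind satisfies $g \rightharpoonup x = \nu(g)\,x$ with $\nu \neq 0$, so $\rightharpoonup$ is never trivial and no such datum is a crossed system; a flag datum of the first kind $(g_0,\alpha,\lambda,D,\Delta)$ always has $g \rightharpoonup x = 0$, while $x \triangleleft g = \lambda(g)\,x$ is trivial exactly when $\lambda = 0$. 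Hence ${\mathcal C}{\mathcal S}(k,\mathfrak{g})$ is in bijection with the flag datums of the first kind having $\lambda = 0$.

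The main point is then to check that imposing $\lambda = 0$ collapses the axioms $(F1)$--$(F7)$ to the list \equref{flagper1}--\equref{flagper4} defining ${\mathcal D}(\mathfrak{g})$. Indeed, with $\lambda = 0$ the condition $(F1)$ is automatic; the middle relation $\lambda(g_0) = -\alpha^2$ in $(F2)$ forces $\alpha = 0$, after which $(F2)$ reduces to $D(g_0) = 0$ and $[g,g_0] = 0$; $(F3)$ becomes $[g,\,D(h) + \Delta(h)] = 0$; $(F4)$ becomes $D^2(g) + D(\Delta(g)) = 0$; $(F5)$ becomes $D^2(g) + \Delta(D(g)) = [g_0,g]$; and $(F6)$, $(F7)$ are unchanged, asserting that $\Delta$ is a derivation and $D$ an anti-derivation. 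These are precisely \equref{flagper1}--\equref{flagper4}, so $(g_0,0,0,D,\Delta) \mapsto (g_0,D,\Delta)$ is the desired bijection ${\mathcal C}{\mathcal S}(k,\mathfrak{g}) \cong {\mathcal D}(\mathfrak{g})$. Substituting $\lambda = 0$ and $\alpha = 0$ into \equref{extenddim1.1a}--\equref{extenddim1.1b} and then into \equref{brackunif} --- equivalently, into \equref{primuunif1}--\equref{primuunif2} --- gives, on the basis $\{x, e_i\}$, the brackets $[e_i,e_j] = [e_i,e_j]$, $[x,x] = g_0$, $[e_i,x] = \Delta(e_i)$ and $[x,e_i] = D(e_i)$, which is exactly $[-,-]_{(g_0,D,\Delta)}$ of the statement; this settles step (ii).

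Finally, for step (iii), write a linear map $r : V \to \mathfrak{g}$ as $G := r(x) \in \mathfrak{g}$, and note that since the bracket on $V = k$ vanishes the term $r(\{x,x\})$ drops out. The simplified form of $\approx$ recorded just before \coref{extprobra} then reads $D(g) = D'(g) + [G,g]$, $\Delta(g) = \Delta'(g) + [g,G]$ and $g_0 = g_0' + [G,G] + D'(G) + \Delta'(G)$ for all $g \in \mathfrak{g}$, which is precisely \equref{lzeci1per2}--\equref{lzeci4per2}; equivalently, this is the relation $\approx_1$ of \reref{alalatfl} specialised to $\lambda = 0$. Transporting everything through the bijection of step (i) and invoking \coref{extprobra} now yields ${\mathbb H}{\mathbb L}^2(k,\mathfrak{g}) \cong {\mathcal D}(\mathfrak{g})/\approx$ together with the claimed description of an arbitrary extension of $k$ by $\mathfrak{g}$. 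The only step involving genuine (albeit routine) computation is the collapse of $(F1)$--$(F7)$ in the third paragraph; everything else is a formal specialisation of results already established.
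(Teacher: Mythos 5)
Your proposal is correct and follows essentially the same route as the paper: the paper's proof likewise reduces the corollary to the identification of crossed systems of $\mathfrak{g}$ through $k$ with flag datums of the first kind having $\lambda = 0$ (hence $\alpha = 0$), observes that $(F1)$--$(F7)$ then collapse to \equref{flagper1}--\equref{flagper4}, and reads the bracket off \equref{primuunif1}--\equref{primuunif2}. If anything you are slightly more careful than the paper's terse proof (which cites \thref{clasdim1}) in that you explicitly match the co-stabilizing relation $\approx$ with \equref{lzeci1per2}--\equref{lzeci4per2}, which is the relation actually appearing in the statement.
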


\begin{proof}
Follows from \thref{clasdim1} since the set of crossed systems
${\mathcal C}{\mathcal S} \, (k, \, \mathfrak{g} )$ is precisely
the set $\Omega (\mathfrak{g}, k) = \bigl( \triangleleft, \,
\triangleright, \, \leftharpoonup, \, \rightharpoonup, \, f, \{-,
\, -\} \bigl)$ of all Leibniz extending structures of
$\mathfrak{g}$ through $k$ having the actions $\triangleleft$ and
$\rightharpoonup$ both trivial. Moreover, any extension $E$ of $k$
by $\mathfrak{g}$ is a Leibniz algebra containing $\mathfrak{g}$
as a subalgebra of codimension $1$. In this context, the
compatibility conditions (F1)-(F7) that define a flag datum of the
first kind collapses to \equref{flagper1}-\equref{flagper4}. The
fact that $\triangleleft$ is the trivial action implies that
$\lambda = 0$. The Leibniz algebra from the statement is the
unified (crossed) product defined by
\equref{primuunif1}-\equref{primuunif2}.
\end{proof}

In the next example we compute explicitly the object ${\mathbb H}
{\mathbb L}^2 (k, \, \mathfrak{g})$ for a certain Leibniz algebra
$\mathfrak{g}$.

\begin{example} \exlabel{calexpext}
Let $\mathfrak{g}$ be the $3$-dimensional Leibniz algebra with the
basis $\{e_{1}, e_{2}, e_{3}\}$ and the bracket defined by: $
[e_{1}, \, e_{3}] = e_{2}$, $[e_{3}, \, e_{3}] = e_{1}$. A little
computation, similar to one performed in \exref{dim34}, shows that
the set ${\mathcal D} (g)$ identifies with the set of all
$6$-tuples $(c, \, b_1, \, b_2, \, b_3, \, d_1, \, d_2) \in k^6$
which satisfy:
$$
b_1 \, (d_1 - b_2) = 0
$$
The bijection is defined such that $(g_{0}, \, D, \, \Delta) \in
{\mathcal D} (g)$ corresponding to $(c, \, b_1, \, b_2, \, b_3, \,
d_1, \, d_2)$ is given by
$$
g_0 := (2\, b_1 d_2 + b_2 d_1 - b_1 b_3) \, e_1 + c \, e_2, \quad
D :=
\left( \begin{array}{ccc} 0 & 0 & d_1  \\
0 & 0 & d_2 \\
0 & 0 & -b_1  \\
\end{array}\right)
\quad \Delta :=
\left( \begin{array}{ccc} 2b_1 & 0 & b_2  \\
b_2 & 3b_1 & b_3 \\
0 & 0 & b_1  \\
\end{array}\right)
$$
The compatibility condition $b_1 \, (d_1 - b_2) = 0$ imposes a
discussion on whether $b_1 = 0$ or $b_1 \neq 0$. This leads to the
description of ${\mathbb L}^2 (k, \, \mathfrak{g})$ as the
following coproduct of sets:
$$
{\mathbb H} {\mathbb L}^2 (k, \, \mathfrak{g}) \cong (k^5 /
\approx_1  ) \sqcup (k^* \times k^4/ \approx_2 ), \qquad {\rm
where:}
$$
$\approx_1$ is the following relation on $k^5$: $(c, \, b_2, \,
b_3, \, d_1, \, d_2) \approx_1 (c', \, b'_2, \, b'_3, \, d'_1, \,
d'_2)$ if and only if there exist $u$, $v\in k$ such that
$$
c = c' + uv, \quad b_2 = b'_2 + v, \quad b_3 = b'_3 + v, \quad d_1
= d'_1 + v, \quad d_2 = d'_2 + u
$$
and $\approx_2$ is the relation on $k^* \times k^4$ defined by:
$(b_1, \, c, \, b_3, \, d_1, \, d_2) \approx_2 (b'_1, \, c', \,
b'_3, \, d'_1, \, d'_2)$ if and only if $b_1 = b'_1$, $b_3 =
b'_3$, $d_2 = d'_2$ and there exist $v$, $w \in k$ such that
$$
c = c' + v d'_1 + 3 w b'_1 + (d_1 - d'_1) \, (v + d'_2 + b'_3)
$$
\end{example}

\subsection*{Bicrossed products and the factorization problem for
Leibniz algebras} The concept of a matched pair of Lie algebras
was introduced in \cite[Theorem 4.1]{majid} and independently in
\cite[Theorem 3.9]{LW}. For any such matched pair of Lie algebras
a new Lie algebra, called the \emph{bicrossed product} is
constructed under the name of bicrossproduct in \cite[Theorem
4.1]{majid}, double cross sum in \cite[Proposition 8.3.2]{majid2},
double Lie algebra \cite[Definition 3.3]{LW}. Now we shall
introduce the concept of a matched pair of Leibniz algebras. As we
will see, in this case the definition is a lot more laborious.

\begin{definition} \delabel{mpLei}
A \emph{matched pair} of Leibniz algebras is a system
$(\mathfrak{g}, \, \mathfrak{h}, \, \triangleleft, \,
\triangleright, \, \leftharpoonup, \, \rightharpoonup)$ consisting
of two Leibniz algebras $(\mathfrak{g}, \, [-, -])$,
$(\mathfrak{h}, \, \{-, - \})$ and four bilinear maps
$\triangleleft : \mathfrak{h} \times \mathfrak{g} \to
\mathfrak{h}$, $\triangleright : \mathfrak{h} \times \mathfrak{g}
\to \mathfrak{g}$, $\leftharpoonup : \mathfrak{g} \times
\mathfrak{h} \to \mathfrak{g}$, $\rightharpoonup : \mathfrak{g}
\times \mathfrak{h} \to \mathfrak{h}$ satisfying the following
compatibilities for any $g$, $h \in \mathfrak{g}$, $x$, $y \in
\mathfrak{h}$:
\begin{enumerate}
\item[(MP1)] $(\mathfrak{h}, \, \lhd)$ is a right
$\mathfrak{g}$-module, i.e. $x \lhd [g, \, h] = (x \lhd g) \lhd h
- (x \lhd h) \lhd g$;

\item[(MP2)] $(\mathfrak{g}, \, \leftharpoonup)$ is a right
$\mathfrak{h}$-module, i.e. $g \leftharpoonup \{x, \, y\} = (g
\leftharpoonup x) \leftharpoonup y - (g \leftharpoonup y)
\leftharpoonup x$;

\item[(MP3)] $x \rhd [g, \, h] = [x \rhd g, \, h] - [x \rhd h,\,
g] + (x \lhd g) \rhd h - (x \lhd h) \rhd g$;

\item[(MP4)] $\{x, \, y\} \lhd g = x \lhd (y \rhd g) + (x \rhd g)
\rightharpoonup y + \{x, \, y \lhd g\} + \{x \lhd g, \, y\}$;

\item[(MP5)] $\{x, \, y\} \rhd g = x \rhd (y \rhd g) + (x \rhd g)
\leftharpoonup y$;

\item[(MP6)] $[g, \, h] \leftharpoonup x = [g, \, h \leftharpoonup
x] + [g \leftharpoonup x, \, h] + g \leftharpoonup (h
\rightharpoonup x) + (g \rightharpoonup x) \rhd h$;

\item[(MP7)] $[g, \, h] \rightharpoonup x = g \rightharpoonup (h
\rightharpoonup x) + (g \rightharpoonup x) \lhd h$;

\item[(MP8)] $g \rightharpoonup \{x,\, y \} = (g \leftharpoonup x)
\rightharpoonup y - (g \leftharpoonup y) \rightharpoonup x + \{g
\rightharpoonup x, \, y\} - \{g \rightharpoonup y, \, x\}$;

\item[(MP9)] $[g, \, h \leftharpoonup x] + [g, \, x \rhd h] + g
\leftharpoonup (h \rightharpoonup x) + g \leftharpoonup (x \lhd h)
= 0$;

\item[(MP10)] $x \rhd (y \rhd g) + x \rhd (g \leftharpoonup y) =
0$;

\item[(MP11)] $x \lhd (y \rhd g) + x \lhd (g \leftharpoonup y) +
\{x, \, y \lhd g\} + \{x, \, g \rightharpoonup y\} = 0$;

\item[(MP12)] $g \rightharpoonup (h \rightharpoonup x) + g
\rightharpoonup (x \lhd h) = 0$.
\end{enumerate}
\end{definition}

Let $(\mathfrak{g}, \, \mathfrak{h}, \, \triangleleft, \,
\triangleright, \, \leftharpoonup, \, \rightharpoonup)$ be a
matched pair of Leibniz algebras. Then $\mathfrak{g} \, \bowtie
\mathfrak{h} := \mathfrak{g} \, \times \mathfrak{h}$, as a vector
space, with the bracket defined for any $g$, $h \in \mathfrak{g}$
and $x$, $y \in \mathfrak{h}$ by
\begin{equation}\eqlabel{brackunifa}
[(g, x), \, (h, y)] := \bigl( [g, \, h] + x \triangleright h + g
\leftharpoonup y , \,\, \{x, \, y \} + x\triangleleft h + g
\rightharpoonup y \bigl)
\end{equation}
is a Leibniz algebra called the \emph{bicrossed product}
associated to the matched pair of Leibniz algebras $(
\mathfrak{g}, \, \mathfrak{h}, \, \triangleleft, \,
\triangleright, \, \leftharpoonup, \, \rightharpoonup )$. This
fact can be proved directly, but it can also be derived as a
special case of \thref{1}. Indeed, let $\mathfrak{g}$ be a Leibniz
algebra and $\Omega(\mathfrak{g}, V) = \bigl(\triangleleft, \,
\triangleright, \, \leftharpoonup, \, \rightharpoonup, \, f, \{-,
\, -\} \bigl)$ an extending datum of $\mathfrak{g}$ through $V$
such that $f$ is the trivial map, i.e. $f(x, y) = 0$, for all $x$,
$y\in V$. Then, we can easily see that $\Omega(\mathfrak{g}, V) =
\bigl(\triangleleft, \, \triangleright, \, \leftharpoonup, \,
\rightharpoonup, \, \{-, \, -\} \bigl)$ is a Leibniz extending
structure of $\mathfrak{g}$ through $V$ if and only if $(V, \,
\{-, -\})$ is a Leibniz algebra and $( \mathfrak{g}, V,
\triangleleft, \, \triangleright, \, \leftharpoonup, \,
\rightharpoonup)$ is a matched pair of Leibniz algebras in the
sense of \deref{mpLei}. In this case, the associated unified
product $\mathfrak{g} \ltimes_{\Omega(\mathfrak{g}, V)} V =
\mathfrak{g} \bowtie V$ is the bicrossed product of the matched
pair $( \mathfrak{g}, V, \triangleleft, \, \triangleright, \,
\leftharpoonup, \, \rightharpoonup)$ as defined by
\equref{brackunifa}.

The bicrossed product of two Leibniz algebras is the construction
responsible for the \emph{factorization problem}, which is a
special case of the ES problem and can be stated as follows:
\emph{Let $\mathfrak{g}$ and $\mathfrak{h}$ be two given Leibniz
algebras. Describe and classify all Leibniz algebras $\Xi$ that
factorize through $\mathfrak{g}$ and $\mathfrak{h}$, i.e. $\Xi$
contains $\mathfrak{g}$ and $\mathfrak{h}$ as Leibniz subalgebras
such that $\Xi = \mathfrak{g} + \mathfrak{h}$ and $\mathfrak{g}
\cap \mathfrak{h} = \{0\}$.} Indeed, using \thref{classif} we can
prove the following:

\begin{corollary}\colabel{bicrfactor}
A Leibniz algebra $\Xi$ factorizes through $\mathfrak{g}$ and
$\mathfrak{h}$ if and only if there exists a matched pair of
Leibniz algebras $(\mathfrak{g}, \mathfrak{h}, \, \triangleleft,
\, \triangleright, \, \leftharpoonup, \, \rightharpoonup)$ such
that $ \Xi \cong \mathfrak{g} \bowtie \mathfrak{h}$.
\end{corollary}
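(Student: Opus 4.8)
The plan is to prove the two implications separately, the nontrivial one reducing immediately to \thref{classif} together with the remark recorded just before the statement.

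\textbf{The ``if'' direction.} First I would check that if $(\mathfrak{g}, \mathfrak{h}, \triangleleft, \triangleright, \leftharpoonup, \rightharpoonup)$ is a matched pair, then $\mathfrak{g} \bowtie \mathfrak{h}$ factorizes through isomorphic copies of $\mathfrak{g}$ and $\mathfrak{h}$. Indeed, $i_{\mathfrak{g}}: \mathfrak{g} \to \mathfrak{g} \bowtie \mathfrak{h}$, $g \mapsto (g,0)$ and $i_{\mathfrak{h}}: \mathfrak{h} \to \mathfrak{g} \bowtie \mathfrak{h}$, $x \mapsto (0,x)$ are injective morphisms of Leibniz algebras: this is read off directly from \equref{brackunifa}, using \equref{001}--\equref{002} and the fact that the cocycle is trivial in a bicrossed product, so that $[(g,0),(h,0)] = ([g,h],0)$ and $[(0,x),(0,y)] = (0,\{x,y\})$. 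Their images $\mathfrak{g} \times \{0\}$ and $\{0\} \times \mathfrak{h}$ satisfy $(\mathfrak{g}\times\{0\}) + (\{0\}\times\mathfrak{h}) = \mathfrak{g}\times\mathfrak{h}$ and $(\mathfrak{g}\times\{0\}) \cap (\{0\}\times\mathfrak{h}) = \{(0,0)\}$, so $\mathfrak{g} \bowtie \mathfrak{h}$ factorizes through $\mathfrak{g}$ and $\mathfrak{h}$; any $\Xi \cong \mathfrak{g} \bowtie \mathfrak{h}$ then does so as well, by transporting these two subalgebras along the isomorphism.

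\textbf{The ``only if'' direction.} Suppose $\Xi$ factorizes through $\mathfrak{g}$ and $\mathfrak{h}$. Then $\mathfrak{h}$ is in particular a subspace of $\Xi$ that is a complement of $\mathfrak{g}$ in $\Xi$. Applying \thref{classif} with $E = \Xi$ and the distinguished complement $V = \mathfrak{h}$, I obtain a Leibniz extending structure $\Omega(\mathfrak{g}, \mathfrak{h}) = (\triangleleft, \triangleright, \leftharpoonup, \rightharpoonup, f, \{-,-\})$ and an isomorphism of Leibniz algebras $\varphi: \mathfrak{g} \ltimes \mathfrak{h} \to \Xi$ that stabilizes $\mathfrak{g}$ and co-stabilizes $\mathfrak{h}$. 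The extra input beyond the bare ES setting is that here $\mathfrak{h}$ is not merely a complement but a \emph{subalgebra} of $\Xi$. Recalling from the proof of \thref{classif} that the structure maps arise from a linear retraction $p: \Xi \to \mathfrak{g}$ with $\ker p = \mathfrak{h}$ via $f(x,y) = p([x,y])$ and $\{x,y\} = [x,y] - p([x,y])$ for $x,y \in \mathfrak{h}$, the hypothesis $[x,y] \in \mathfrak{h} = \ker p$ forces $p([x,y]) = 0$. Hence $f$ is the trivial map and the bracket $\{-,-\}$ produced by the construction coincides with the original Leibniz bracket of $\mathfrak{h}$.

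\textbf{Conclusion.} It now remains to invoke the observation made just before the statement: an extending datum with trivial cocycle $f$ is a Leibniz extending structure of $\mathfrak{g}$ through $V$ precisely when $(V,\{-,-\})$ is a Leibniz algebra and $(\mathfrak{g}, V, \triangleleft, \triangleright, \leftharpoonup, \rightharpoonup)$ is a matched pair in the sense of \deref{mpLei}, in which case $\mathfrak{g} \ltimes V = \mathfrak{g} \bowtie V$. Applying this with $V = \mathfrak{h}$ and the $f$ just shown to be trivial yields that $(\mathfrak{g}, \mathfrak{h}, \triangleleft, \triangleright, \leftharpoonup, \rightharpoonup)$ is a matched pair and $\mathfrak{g} \ltimes \mathfrak{h} = \mathfrak{g} \bowtie \mathfrak{h}$, so $\varphi$ gives the desired isomorphism $\Xi \cong \mathfrak{g} \bowtie \mathfrak{h}$. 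The one point I expect to require care is confirming that the construction of \thref{classif} returns the \emph{given} Leibniz bracket on $\mathfrak{h}$ rather than some twisted version — equivalently, that the vanishing of $f$ is indeed forced by $\mathfrak{h}$ being a subalgebra; everything else is a direct translation through the already-established \thref{classif}.
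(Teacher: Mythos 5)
Your proposal is correct and follows essentially the same route as the paper: the forward implication is the observation that $\mathfrak{g}\times\{0\}$ and $\{0\}\times\mathfrak{h}$ are complementary subalgebras of $\mathfrak{g}\bowtie\mathfrak{h}$, and the converse applies \thref{classif} with the projection $p$ chosen so that $V=\ker p=\mathfrak{h}$, deducing $f=0$ from $[x,y]\in\mathfrak{h}$ and then invoking the identification of trivial-cocycle extending structures with matched pairs. The one point you flagged as needing care --- that the construction returns the original bracket on $\mathfrak{h}$ because $\{x,y\}=[x,y]-p([x,y])=[x,y]$ --- is indeed exactly how the paper's argument closes.
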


\begin{proof}
To start with, notice that any bicrossed product $\mathfrak{g}
\bowtie \mathfrak{h}$ factorizes through $\mathfrak{g} \cong
\mathfrak{g} \times \{0\}$ and $\mathfrak{h} \cong \{0\}\times
\mathfrak{h}$. Conversely, assume that $\Xi$ factorizes through
$\mathfrak{g}$ and $\mathfrak{h}$. Let $p: \Xi \to \mathfrak{g}$
be the $k$-linear projection of $\Xi$ on $\mathfrak{g}$, i.e. $p
(g + x) := g$, for all $g\in \mathfrak{g}$ and $x \in
\mathfrak{h}$. Now, we apply \thref{classif} for $V: = {\rm
Ker}(p) = \mathfrak{h}$. Since $V$ is a Leibniz subalgebra of $E
:= \Xi$, the map $f = f_p$ constructed in the proof of
\thref{classif} is the trivial map as $[x, y] \in V = {\rm Ker}
(p)$. Thus, the Leibniz extending structure $\Omega(\mathfrak{g},
V) = \bigl(\triangleleft, \, \triangleright, \, \leftharpoonup, \,
\rightharpoonup, \, f, \, \{-, \, -\}\bigl)$ constructed in the
proof of \thref{classif} is precisely a matched pair of Leibniz
algebra and the unified product $\mathfrak{g}
\ltimes_{\Omega(\mathfrak{g}, V)} V = \mathfrak{g} \bowtie V$ is
the bicrossed product of the matched pair $( \mathfrak{g}, V,
\triangleleft, \, \triangleright, \, \leftharpoonup, \,
\rightharpoonup)$. Explicitly, the matched pair $(\mathfrak{g},
\mathfrak{h}, \, \triangleleft = \triangleleft_p, \,
\triangleright = \triangleright_p, \, \leftharpoonup =
\leftharpoonup_{p}, \, \rightharpoonup = \rightharpoonup_{p})$ is
given by:
\begin{eqnarray}
x \triangleright g := p \bigl(\left[x, \, g \right]\bigl), \qquad
x \triangleleft g : = \left[x, \, g \right] - p \bigl(\left[x, \,
g \right]\bigl) \eqlabel{mpcanonic1} \\ g \leftharpoonup x :=
p\bigl([g, \, x]\bigl), \qquad g \rightharpoonup x := [g, \, x] -
p\bigl([g, \, x]\bigl)  \eqlabel{mpcanonic2}
\end{eqnarray}
for all $x \in \mathfrak{h}$ and $g \in \mathfrak{g}$.
\end{proof}

From now on the matched pair constructed in \equref{mpcanonic1}
and \equref{mpcanonic2} will be called the \emph{canonical matched
pair} associated to the factorization $\Xi = \mathfrak{g} +
\mathfrak{h}$ of $\Xi$ through $\mathfrak{g}$ and $\mathfrak{h}$.
Based on \coref{bicrfactor} the factorization problem can be
restated in a computational manner as follows: Let $\mathfrak{g}$
and $\mathfrak{h}$ be two given Leibniz algebras. Describe the set
of all matched pairs $(\mathfrak{g}, \mathfrak{h}, \,
\triangleleft, \, \triangleright, \, \leftharpoonup, \,
\rightharpoonup)$  and classify up to an isomorphism all bicrossed
products $\mathfrak{g} \bowtie \mathfrak{h}$. A detailed study of
this problem will be given somewhere else.

\begin{example}
Let $\mathfrak{g}$ be the $3$-dimensional Leibniz algebra
considered in \exref{dim34} and $k$ be the $1$-dimensional
(abelian) Leibniz algebra. Then all bicrossed products
$\mathfrak{g} \bowtie k$ can be explicitly described as a special
case of \exref{dim34}. To this end we need to consider $g_0 := 0$
and $\alpha := 0$ in the unified products associated to all flag
datums of the first kind provided in \exref{dim34} and $g_0 := 0$
in the unified products associated to all flag datums of the
second kind. For instance, by taking $d_3 = 0$ in the Leibniz
algebra $\mathfrak{g}_{21}^{(\nu_0, \, d_1, \, d_2, , d_3)}$ of
\exref{dim34} we obtain the bicrossed product $\mathfrak{g}
\bowtie k$ which is a $4$-dimensional Leibniz algebra with the
basis $\{e_1, e_2, e_3, x\}$ and the bracket given by:
\begin{eqnarray*}
\left[e_1, \, e_3\right] &=& e_2, \quad \left[e_3, \, e_3\right] =
e_1, \quad \left[e_3, \, x \right] = - d_1 e_1 - (d_2 - \nu_0^{-1}
d_1) e_2 + \nu_0 x, \\
\quad \left[x, \, e_3\right] &=& d_1 \, e_1 + d_2 \, e_2 - \nu_0
\, x
\end{eqnarray*}
for all $\nu_0 \in k^*$ and $d_1$, $d_2 \in k$. This Leibniz
algebra is the bicrossed product associated to the following
matched pair $(\mathfrak{g}, k, \, \triangleleft, \,
\triangleright, \, \leftharpoonup, \, \rightharpoonup)$:
\begin{eqnarray*}
x \triangleleft e_3 &:=& - \nu_0 \, x, \quad x \triangleright e_3
:= d_1 e_1 + d_2 e_2, \\
e_3 \rightharpoonup x &:=& \nu_0 x, \quad \,\, e_3 \leftharpoonup
x := -d_1 e_1 + ( - d_2 + \nu_0^{-1} d_1) e_2
\end{eqnarray*}
where the undefined actions are zero and $x$ is a basis of $k$.
Another example of a matched pair of Leibniz algebras and the
corresponding bicrossed product will be given in \exref{mpleib}.
\end{example}

\section{Classifying complements for extensions of
Leibniz algebras} \selabel{complements}

This section is devoted to the classifying complements (CC)
problem whose statement was given in the Introduction. Let
$\mathfrak{g} \subseteq \Xi$ be a Leibniz subalgebra of $\Xi$. A
Leibniz subalgebra $\mathfrak{h}$ of $\Xi$ is called a
\emph{complement} of $\mathfrak{g}$ in $\Xi$ (or a
\emph{$\mathfrak{g}$-complement} of $\Xi$) if $\Xi = \mathfrak{g}
+ \mathfrak{h}$ and $\mathfrak{g} \cap \mathfrak{h} = \{0\}$. If
$\mathfrak{h}$ is a complement of $\mathfrak{g}$ in $\Xi$,
\coref{bicrfactor} shows that $ \Xi \cong \mathfrak{g} \bowtie
\mathfrak{h}$, where $\mathfrak{g} \bowtie \mathfrak{h}$ is the
bicrossed product associated to the canonical matched pair of the
factorization $\Xi = \mathfrak{g} + \mathfrak{h}$ as constructed
in \equref{mpcanonic1} and \equref{mpcanonic2}.

We denote by ${\mathcal F} (\mathfrak{g}, \, \Xi)$ the (possibly
empty) isomorphism classes of all $\mathfrak{g}$-complements of
$\Xi$. The \emph{factorization index} of $\mathfrak{g}$ in $\Xi$
is defined by $[\Xi : \mathfrak{g}]^f := |\, {\mathcal F}
(\mathfrak{g}, \, \Xi) \,|$ as a numerical measure of the (CC)
problem.

\begin{definition} \delabel{deformaplie}
Let $(\mathfrak{g}, \, \mathfrak{h}, \, \triangleright, \,
\triangleleft, \, \leftharpoonup, \, \rightharpoonup)$ be a
matched pair of Leibniz algebras. A linear map $r: \mathfrak{h}
\to \mathfrak{g}$ is called a \emph{deformation map} of the
matched pair $(\mathfrak{g}, \mathfrak{h}, \triangleright,
\triangleleft, \, \leftharpoonup, \, \rightharpoonup)$ if the
following compatibility holds for any $x$, $y \in \mathfrak{h}$:
\begin{equation}\eqlabel{factLie}
r\bigl([x, \,y]\bigl) \, - \, \bigl[r(x), \, r(y)\bigl] = x
\triangleright r(y) + r(x) \leftharpoonup y - r \bigl(x
\triangleleft r(y) + r(x) \rightharpoonup y \, \bigl)
\end{equation}
\end{definition}

We denote by ${\mathcal D}{\mathcal M} \, (\mathfrak{h},
\mathfrak{g} \, | \, (\triangleright, \triangleleft,
\leftharpoonup, \, \rightharpoonup) )$ the set of all deformation
maps of the matched pair $(\mathfrak{g}, \mathfrak{h},
\triangleright, \triangleleft, \leftharpoonup, \rightharpoonup)$.
The trivial map $r(x) = 0$, for all $x \in \mathfrak{h}$, is of
course a deformation map. The right hand side of \equref{factLie}
measures how far $r: \mathfrak{h} \to \mathfrak{g}$ is from being
a Leibniz algebra map. Using this concept which will play a key
role in solving the (CC) problem, we introduce the following
deformation of a Leibniz algebra:

\begin{theorem}\thlabel{deforLie}
Let $\mathfrak{g}$ be a Leibniz subalgebra of $\Xi$,
$\mathfrak{h}$ a given $\mathfrak{g}$-complement of $\Xi$ and $r:
\mathfrak{h} \to \mathfrak{g}$ a deformation map of the associated
canonical matched pair $(\mathfrak{g}, \mathfrak{h},
\triangleright, \triangleleft, \leftharpoonup, \rightharpoonup)$.

$(1)$ Let $f_{r}: \mathfrak{h} \to \Xi = \mathfrak{g} \bowtie
\mathfrak{h}$ be the $k$-linear map defined for any $x \in
\mathfrak{h}$ by:
$$f_{r}(x) = (r(x),\, x)$$
Then $\widetilde{\mathfrak{h}} : = {\rm Im}(f_{r})$ is a
$\mathfrak{g}$-complement of $\Xi$.

$(2)$ $\mathfrak{h}_{r} := \mathfrak{h}$, as a vector space, with
the new bracket defined for any $x$, $y \in \mathfrak{h}$ by:
\begin{equation}\eqlabel{rLiedef}
[x, \, y]_{r} := [x, \, y] + x \triangleleft r(y) + r(x)
\rightharpoonup y
\end{equation}
is a Leibniz algebra called the $r$-deformation of $\mathfrak{h}$.
Furthermore, $\mathfrak{h}_{r} \cong \widetilde{\mathfrak{h}}$, as
Leibniz algebras.
\end{theorem}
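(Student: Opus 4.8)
The plan is to verify part $(1)$ directly by checking that $\widetilde{\mathfrak{h}} = {\rm Im}(f_r)$ is a Leibniz subalgebra of $\Xi = \mathfrak{g} \bowtie \mathfrak{h}$ which is a linear complement of $\mathfrak{g}$, and then to deduce part $(2)$ by transporting the bracket along the obvious linear isomorphism. First I would observe that $f_r$ is injective (its second component is the identity on $\mathfrak{h}$), so $\widetilde{\mathfrak{h}}$ has dimension equal to $\dim \mathfrak{h}$ and $\widetilde{\mathfrak{h}} \cap \mathfrak{g} = \widetilde{\mathfrak{h}} \cap (\mathfrak{g} \times \{0\}) = \{0\}$ since any element of $\widetilde{\mathfrak{h}}$ has the form $(r(x), x)$; hence $\mathfrak{g} + \widetilde{\mathfrak{h}} = \Xi$ for dimension reasons (or directly: $(g,x) = (g - r(x), 0) + (r(x), x)$). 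The substance is showing $\widetilde{\mathfrak{h}}$ is closed under the bracket of $\mathfrak{g} \bowtie \mathfrak{h}$. Using the formula \equref{brackunifa} I would compute, for $x$, $y \in \mathfrak{h}$,
$$
[(r(x), x), \, (r(y), y)] = \bigl( [r(x), r(y)] + x \triangleright r(y) + r(x) \leftharpoonup y, \,\, \{x, y\} + x \triangleleft r(y) + r(x) \rightharpoonup y \bigl).
$$
The second component is exactly $[x,y]_r$ as defined in \equref{rLiedef}, and the deformation map condition \equref{factLie} says precisely that the first component equals $r\bigl([x,y]_r\bigr)$. Therefore $[(r(x),x),(r(y),y)] = (r([x,y]_r), [x,y]_r) = f_r([x,y]_r) \in \widetilde{\mathfrak{h}}$, so $\widetilde{\mathfrak{h}}$ is a subalgebra and hence a $\mathfrak{g}$-complement of $\Xi$, proving $(1)$.

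For part $(2)$, the computation just performed shows that the linear bijection $f_r : \mathfrak{h} \to \widetilde{\mathfrak{h}}$ intertwines the bracket $[-,-]_r$ on $\mathfrak{h}$ with the bracket of $\Xi$ restricted to $\widetilde{\mathfrak{h}}$. Since $\widetilde{\mathfrak{h}}$ is a Leibniz algebra (being a subalgebra of the Leibniz algebra $\Xi$) and $f_r$ is a linear isomorphism transporting its bracket, it follows that $(\mathfrak{h}, [-,-]_r) = \mathfrak{h}_r$ is a Leibniz algebra and that $f_r : \mathfrak{h}_r \to \widetilde{\mathfrak{h}}$ is an isomorphism of Leibniz algebras. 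This simultaneously establishes that $[-,-]_r$ satisfies the Leibniz identity (so $\mathfrak{h}_r$ is well-defined as claimed) and the asserted isomorphism $\mathfrak{h}_r \cong \widetilde{\mathfrak{h}}$.

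The only genuinely delicate point is the bookkeeping in expanding \equref{brackunifa} and matching the first component against \equref{factLie}: one must be careful that the four actions $\triangleright$, $\triangleleft$, $\leftharpoonup$, $\rightharpoonup$ are applied in the correct order and on the correct arguments, since for Leibniz algebras nothing is symmetric. I expect this to be routine but the main place an error could creep in. Everything else — injectivity of $f_r$, the complement property, and the transport-of-structure argument — is formal.
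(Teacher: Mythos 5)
Your proposal is correct and follows essentially the same route as the paper's proof: you compute $[(r(x),x),(r(y),y)]$ via \equref{brackunifa}, use the deformation-map identity \equref{factLie} to recognize the result as $f_r([x,y]_r)$, check the complement conditions directly, and then obtain part $(2)$ by transport of structure along $f_r$. The bookkeeping you flag as the delicate point indeed works out exactly as you state.
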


\begin{proof}
$(1)$ To start with, we will prove that $\widetilde{\mathfrak{h}}
= \{\bigl(r(x),\, x \bigl) ~|~ x \in \mathfrak{h}\}$ is a Leibniz
subalgebra of $\mathfrak{g} \bowtie \mathfrak{h} = \Xi $. Indeed,
for all $x$, $y \in \mathfrak{h}$ we have:
\begin{eqnarray*}
\bigl[ (r(x), x), (r(y), y) \bigl]
&\stackrel{\equref{brackunifa}}{=}& \Bigl(\underline{\bigl[r(x),
r(y)\bigl] + x \triangleright r(y) + r(x) \leftharpoonup y}, \,
[x, y] + x \triangleleft r(y) + r(x) \rightharpoonup y \Bigl)\\
&\stackrel{\equref{factLie}}{=}& \Bigl(r ([x, \, y] + x
\triangleleft r(y) + r(x) \rightharpoonup y), \, [x, \, y] + x
\triangleleft r(y) + r(x) \rightharpoonup y\Bigl)
\end{eqnarray*}
i.e. $\bigl[ (r(x), \, x), \, (r(y), \, y) \bigl] \in
\widetilde{\mathfrak{h}}$. Moreover, it is straightforward to see
that $\mathfrak{g} \, \cap \,  \widetilde{\mathfrak{h}} = \{0\}$
and $(g,\, x) = \bigl(g - r(x), \, 0\bigl) + \bigl(r(x),\, x
\bigl) \in \mathfrak{g} + \widetilde{\mathfrak{h}}$ for all $g \in
\mathfrak{g}$, $x \in \mathfrak{h}$. Here, we view $\mathfrak{g}
\cong \mathfrak{g} \times \{0\}$ as a subalgebra of $\mathfrak{g}
\bowtie \mathfrak{h}$. Therefore, $\widetilde{\mathfrak{h}}$ is a
$\mathfrak{g}$-complement of $\Xi = \mathfrak{g} \bowtie
\mathfrak{h}$.

$(2)$ We denote by $\widetilde{f_{r}} : \mathfrak{h} \to
\widetilde{\mathfrak{h}}$ the linear isomorphism induced by
$f_{r}$. We will prove that $\widetilde{f_{r}}$ is also a Leibniz
algebra map if we consider on $\mathfrak{h}$ the bracket given by
\equref{rLiedef}. Indeed, for any $x$, $y \in \mathfrak{h}$ we
have:
\begin{eqnarray*}
\widetilde{f_{r}}\bigl([x,\,
y]_{r}\bigl)&\stackrel{\equref{rLiedef}}{=}&
\widetilde{f_{r}}\bigl([x, \, y] + x \triangleleft r(y) + r(x) \rightharpoonup y\bigl)\\
&{=}& \Bigl(\underline{r \bigl([x, \, y] + x \triangleleft r(y) +
r(x) \rightharpoonup y\bigl)},\, [x, \, y] +
x\triangleleft r(y)+ r(x) \rightharpoonup y\Bigl)\\
&\stackrel{\equref{factLie}}{=}& \Bigl([r(x), \, r(y)] + x
\triangleright r(y) + r((x) \leftharpoonup y,\, [x, \, y] + x
\triangleleft r(y) + r(x) \rightharpoonup y\Bigl)\\
&\stackrel{\equref{brackunifa}}{=}& [(r(x),\, x), \, (r(y), \, y)]
= [\widetilde{f_{r}}(x), \, \widetilde{f_{r}}(y)]
\end{eqnarray*}
Therefore, $\mathfrak{h}_{r}$ is a Leibniz algebra and the proof
is now finished.
\end{proof}

The following is the converse of \thref{deforLie}: it proves that
all $\mathfrak{g}$-complements of $\Xi$ are $r$-deformations of a
given complement.

\begin{theorem} \thlabel{descrierecomlie}
Let $\mathfrak{g}$ be a Leibniz subalgebra of $\Xi$,
$\mathfrak{h}$ a given $\mathfrak{g}$-complement of $\Xi$ with the
associated canonical matched pair of Leibniz algebras
$(\mathfrak{g}, \mathfrak{h}, \triangleright, \triangleleft, ,
\leftharpoonup, \rightharpoonup)$. Then $\overline{\mathfrak{h}}$
is a $\mathfrak{g}$-complement of $\Xi$ if and only if there
exists an isomorphism of Leibniz algebras $\overline{\mathfrak{h}}
\cong \mathfrak{h}_{r}$, for some deformation map $r: \mathfrak{h}
\to \mathfrak{g}$ of the matched pair $(\mathfrak{g},
\mathfrak{h}, \triangleright, \triangleleft, \leftharpoonup,
\rightharpoonup)$.
\end{theorem}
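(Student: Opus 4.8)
The plan is to prove both implications of the equivalence. For the ``if'' direction, suppose $r: \mathfrak{h} \to \mathfrak{g}$ is a deformation map. Then \thref{deforLie} already tells us that $\mathfrak{h}_r \cong \widetilde{\mathfrak{h}} = {\rm Im}(f_r)$ and that $\widetilde{\mathfrak{h}}$ is a $\mathfrak{g}$-complement of $\Xi$. Hence any Leibniz algebra $\overline{\mathfrak{h}}$ isomorphic to $\mathfrak{h}_r$ is itself a $\mathfrak{g}$-complement up to isomorphism — but we must be slightly careful here, since being a $\mathfrak{g}$-complement is a property of a subalgebra of $\Xi$, not an abstract isomorphism class. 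So the precise statement to prove in this direction is: if $\overline{\mathfrak{h}} \cong \mathfrak{h}_r$ as Leibniz algebras then the isomorphism class of $\overline{\mathfrak{h}}$ lies in ${\mathcal F}(\mathfrak{g}, \Xi)$, which is immediate because $\widetilde{\mathfrak{h}} \in {\mathcal F}(\mathfrak{g},\Xi)$ represents that same isomorphism class. This direction is essentially a restatement of \thref{deforLie}.

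The substance is in the ``only if'' direction. Suppose $\overline{\mathfrak{h}}$ is an arbitrary $\mathfrak{g}$-complement of $\Xi$. First I would use the decomposition $\Xi = \mathfrak{g} \oplus \mathfrak{h}$ (as vector spaces, coming from the fixed complement $\mathfrak{h}$) to write, for each $\bar x \in \overline{\mathfrak{h}}$, a unique expression $\bar x = -r(\bar x) + \eta(\bar x)$ with $r(\bar x) \in \mathfrak{g}$ and $\eta(\bar x) \in \mathfrak{h}$; here $\eta: \overline{\mathfrak{h}} \to \mathfrak{h}$ is the restriction of the projection $\pi: \Xi \to \mathfrak{h}$ and $r = -p|_{\overline{\mathfrak{h}}}$ where $p: \Xi \to \mathfrak{g}$ is the projection along $\mathfrak{h}$. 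The key claim is that $\eta$ is a linear isomorphism $\overline{\mathfrak{h}} \to \mathfrak{h}$. Injectivity: if $\eta(\bar x) = 0$ then $\bar x = -r(\bar x) \in \mathfrak{g} \cap \overline{\mathfrak{h}} = \{0\}$. Surjectivity: a dimension count, or more robustly, the observation that $\Xi = \mathfrak{g} + \overline{\mathfrak{h}}$ forces $\mathfrak{h} = \pi(\Xi) = \pi(\overline{\mathfrak{h}}) = {\rm Im}(\eta)$ since $\pi$ kills $\mathfrak{g}$. Now define $R := r \circ \eta^{-1}: \mathfrak{h} \to \mathfrak{g}$; then $\overline{\mathfrak{h}} = \{(R(x),\, x) \mid x \in \mathfrak{h}\} = {\rm Im}(f_R)$ inside $\mathfrak{g} \bowtie \mathfrak{h} = \Xi$, up to the identification of $\Xi$ with the bicrossed product from \coref{bicrfactor}.

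It then remains to show that $R$ is a deformation map, i.e. satisfies \equref{factLie}. This is forced by the requirement that $\overline{\mathfrak{h}} = {\rm Im}(f_R)$ be closed under the bracket of $\Xi = \mathfrak{g} \bowtie \mathfrak{h}$: computing $[(R(x), x), (R(y), y)]$ via \equref{brackunifa} gives a pair whose second component is $[x,y] + x \triangleleft R(y) + R(x) \rightharpoonup y$ and whose first component is $[R(x), R(y)] + x \triangleright R(y) + R(x) \leftharpoonup y$; for this pair to lie in $\{(R(z), z) \mid z \in \mathfrak{h}\}$ the first component must equal $R$ applied to the second, which is precisely \equref{factLie}. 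Once $R \in {\mathcal D}{\mathcal M}(\mathfrak{h}, \mathfrak{g} \mid (\triangleright, \triangleleft, \leftharpoonup, \rightharpoonup))$, \thref{deforLie}(2) gives $\mathfrak{h}_R \cong \widetilde{\mathfrak{h}} = {\rm Im}(f_R) = \overline{\mathfrak{h}}$, completing the proof. The main obstacle — though it is more bookkeeping than difficulty — is handling the identification $\Xi \cong \mathfrak{g} \bowtie \mathfrak{h}$ consistently: one must check that the canonical matched pair used to form the bicrossed product in \coref{bicrfactor} is exactly the one in the hypothesis, and that under this identification $\overline{\mathfrak{h}}$ corresponds to a graph ${\rm Im}(f_R)$ rather than some twisted version; this is where the choice $r = -p|_{\overline{\mathfrak{h}}}$ (with the sign) must be tracked carefully so that the formulas match \thref{deforLie} on the nose.
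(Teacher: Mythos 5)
Your argument is correct and follows essentially the same route as the paper's own proof: split $\Xi=\mathfrak{g}\oplus\mathfrak{h}=\mathfrak{g}\oplus\overline{\mathfrak{h}}$, show the projection onto $\mathfrak{h}$ restricts to a linear isomorphism $\eta:\overline{\mathfrak{h}}\to\mathfrak{h}$, realize $\overline{\mathfrak{h}}$ as a graph inside $\mathfrak{g}\bowtie\mathfrak{h}$, extract \equref{factLie} from closure under the bracket, and invoke \thref{deforLie}(2). The one slip is the sign: with your convention $r=-p|_{\overline{\mathfrak{h}}}$ the graph you obtain is $\{(-R(x),\,x)\mid x\in\mathfrak{h}\}$, not $\{(R(x),\,x)\}$, so the deformation map must be taken as $p\circ\eta^{-1}$ rather than $-p\circ\eta^{-1}$ --- a harmless correction that the closure computation itself would force on you.
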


\begin{proof}
Let $\overline{\mathfrak{h}}$ be an arbitrary
$\mathfrak{g}$-complement of $\Xi$. Since $\Xi = \mathfrak{g}
\oplus \mathfrak{h} = \mathfrak{g} \oplus \overline{\mathfrak{h}}$
we can find four $k$-linear maps:
$$
u: \mathfrak{h} \to \mathfrak{g}, \quad v: \mathfrak{h} \to
\overline{\mathfrak{h}}, \quad t:\overline{\mathfrak{h}} \to
\mathfrak{g}, \quad w: \overline{\mathfrak{h}} \to \mathfrak{h}
$$
such that for all $x \in \mathfrak{h}$ and $y \in
\overline{\mathfrak{h}}$ we have:
\begin{equation} \eqlabel{lie111}
x = u(x) \oplus v(x), \qquad y = t(y) \oplus w(y)
\end{equation}
By an easy computation it follows that $v: \mathfrak{h} \to
\overline{\mathfrak{h}}$ is a linear isomorphism of vector spaces.
We denote by $\tilde{v}: \mathfrak{h} \to \mathfrak{g} \bowtie
\mathfrak{h}$ the composition:
$$
\tilde{v} : \, \mathfrak{h} \, \stackrel{v} {\longrightarrow} \,
\overline{\mathfrak{h}} \, \stackrel{i}{\hookrightarrow} \, \Xi \,
= \,\mathfrak{g} \bowtie \mathfrak{h}
$$
Therefore, we have $\tilde{v}(x) \stackrel{\equref{lie111}}{=}
\bigl(-u(x),\, x\bigl)$, for all $x \in \mathfrak{h}$. Then we
shall prove that $r := - u$ is a deformation map and
$\overline{\mathfrak{h}} \cong \mathfrak{h}_{r}$. Indeed,
$\overline{\mathfrak{h}} = {\rm Im} (v) = {\rm Im} (\tilde{v})$ is
a Leibniz subalgebra of $\Xi = \mathfrak{g} \bowtie \mathfrak{h}$
and we have:
\begin{eqnarray*}
[\bigl(r(x),\, x\bigl), \, \bigl(r(y),\, y\bigl)]
&\stackrel{\equref{brackunifa}}{=}& \bigl([r(x), \, r(y)] + x
\triangleright r(y) +  r(x) \leftharpoonup y , \,[x, \, y] + \\
&&x \triangleleft r(y) + r(x) \rightharpoonup y \bigl) \, =
\bigl(r(z),\, z\bigl)
\end{eqnarray*}
for some $z \in \mathfrak{h}$. Thus, we obtain:
\begin{equation}\eqlabel{lie113}
r(z) = [r(x), \, r(y)] + x \triangleright r(y) +  r(x)
\leftharpoonup y, \qquad z = [x, \, y] + x \triangleleft r(y) +
r(x) \rightharpoonup y
\end{equation}

By applying $r$ to the second part of \equref{lie113} it follows
that $r$ is a deformation map of the matched pair $(\mathfrak{g},
\mathfrak{h}, \triangleright, \triangleleft, \leftharpoonup,
\rightharpoonup)$. Furthermore, \equref{lie113} and
\equref{rLiedef} show that $v: \mathfrak{h}_{r} \to
\overline{\mathfrak{h}}$ is also a Leibniz algebra map which
finishes the proof.
\end{proof}

In order to provide the classification of all complements we
introduce the following:

\begin{definition}\delabel{equivLie}
Let $(\mathfrak{g}, \mathfrak{h}, \triangleright, \triangleleft,
\leftharpoonup, \rightharpoonup)$ be a matched pair of Leibniz
algebras. Two deformation maps $r$, $R: \mathfrak{h} \to
\mathfrak{g}$ are called \emph{equivalent} and we denote this by
$r \sim R$ if there exists $\sigma: \mathfrak{h} \to \mathfrak{h}$
a $k$-linear automorphism of $\mathfrak{h}$ such that for any $x$,
$y\in \mathfrak{h}$:
\begin{equation*}\eqlabel{equivLiemaps}
\sigma \bigl([x, \, y]\bigl) - \bigl[\sigma(x), \, \sigma(y)\bigl]
= \sigma(x) \triangleleft R \bigl(\sigma(y)\bigl) + R
\bigl(\sigma(x)\bigl) \rightharpoonup \sigma(y) - \sigma\bigl(x
\triangleleft r(y)\bigl) - \sigma \bigl(r(x) \rightharpoonup
y\bigl)
\end{equation*}
\end{definition}

To conclude this section, the following result provides the answer
to the (CC) problem for Leibniz algebras:

\begin{theorem}\thlabel{clasformelorLie}
Let $\mathfrak{g}$ be a Leibniz subalgebra of $\Xi$,
$\mathfrak{h}$ a $\mathfrak{g}$-complement of $\Xi$ and
$(\mathfrak{g}, \mathfrak{h}, \triangleright, \triangleleft,
\leftharpoonup, \rightharpoonup)$ the associated canonical matched
pair. Then $\sim$ is an equivalence relation on the set $
{\mathcal D}{\mathcal M} \, ( \mathfrak{h}, \mathfrak{g} \, | \, (
\triangleright, \triangleleft, \leftharpoonup, \rightharpoonup )
)$ and the map
$$
{\mathcal H}{\mathcal A}^{2} (\mathfrak{h}, \mathfrak{g} \, | \,
(\triangleright, \triangleleft, \leftharpoonup, \,
\rightharpoonup) ) \, := \, {\mathcal D}{\mathcal M} \,
(\mathfrak{h}, \mathfrak{g} \, | \, (\triangleright,
\triangleleft, \leftharpoonup, \rightharpoonup) )/ \sim \,
\longrightarrow {\mathcal F} (\mathfrak{g}, \, \Xi), \qquad
\overline{r} \mapsto \mathfrak{h}_{r}
$$
is a bijection between ${\mathcal H}{\mathcal A}^{2}
(\mathfrak{h}, \mathfrak{g} \, | \, (\triangleright,
\triangleleft, \leftharpoonup, \rightharpoonup) )$ and the
isomorphism classes of all $\mathfrak{g}$-complements of $\Xi$. In
particular, the factorization index of $\mathfrak{g}$ in $\Xi$ is
computed by the formula:
$$
[\Xi : \mathfrak{g}]^f = | {\mathcal H}{\mathcal A}^{2}
(\mathfrak{h}, \mathfrak{g} \, | \, (\triangleright,
\triangleleft, \leftharpoonup, \rightharpoonup) )|
$$
\end{theorem}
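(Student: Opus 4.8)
The plan is to deduce the statement from the two key results \thref{deforLie} and \thref{descrierecomlie} together with one elementary observation. For two deformation maps $r$, $R \in {\mathcal D}{\mathcal M}\,(\mathfrak{h}, \mathfrak{g}\,|\,(\triangleright, \triangleleft, \leftharpoonup, \rightharpoonup))$ and any $k$-linear map $\sigma: \mathfrak{h} \to \mathfrak{h}$, expanding $\sigma\bigl([x, y]_{r}\bigr) = [\sigma(x), \sigma(y)]_{R}$ by means of the definition \equref{rLiedef} of the deformed brackets shows at once that $\sigma$ is a morphism of Leibniz algebras $\mathfrak{h}_{r} \to \mathfrak{h}_{R}$ if and only if the compatibility appearing in \deref{equivLie} holds. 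Consequently, for $\sigma$ a linear automorphism of $\mathfrak{h}$, the relation $r \sim R$ is precisely equivalent to the existence of an isomorphism of Leibniz algebras $\mathfrak{h}_{r} \cong \mathfrak{h}_{R}$.

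Granting this, the fact that $\sim$ is an equivalence relation on ${\mathcal D}{\mathcal M}\,(\mathfrak{h}, \mathfrak{g}\,|\,(\triangleright, \triangleleft, \leftharpoonup, \rightharpoonup))$ is immediate: reflexivity follows by taking $\sigma = {\rm Id}_{\mathfrak{h}}$, when both sides of the identity in \deref{equivLie} vanish; symmetry and transitivity follow from the symmetry and transitivity of the relation ``isomorphic as Leibniz algebras'', since $\sigma^{-1}$ implements $R \sim r$ whenever $\sigma$ implements $r \sim R$, and the composite of two such isomorphisms implements the composite relation.

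I would then check that $\overline{r} \mapsto \mathfrak{h}_{r}$ gives a well-defined map ${\mathcal H}{\mathcal A}^{2}(\mathfrak{h}, \mathfrak{g}\,|\,(\triangleright, \triangleleft, \leftharpoonup, \rightharpoonup)) \to {\mathcal F}(\mathfrak{g}, \Xi)$: by \thref{deforLie}, $\mathfrak{h}_{r}$ is isomorphic as a Leibniz algebra to the genuine $\mathfrak{g}$-complement $\widetilde{\mathfrak{h}} = {\rm Im}(f_{r})$ of $\Xi$, so it represents a class in ${\mathcal F}(\mathfrak{g}, \Xi)$, and by the observation above $r \sim R$ forces $\mathfrak{h}_{r} \cong \mathfrak{h}_{R}$, so the induced map on $\sim$-classes is well-defined. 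Surjectivity is exactly the content of \thref{descrierecomlie}: any $\mathfrak{g}$-complement $\overline{\mathfrak{h}}$ of $\Xi$ is isomorphic, as a Leibniz algebra, to $\mathfrak{h}_{r}$ for some deformation map $r$, hence its class is the image of $\overline{r}$. For injectivity, if $\mathfrak{h}_{r}$ and $\mathfrak{h}_{R}$ define the same class in ${\mathcal F}(\mathfrak{g}, \Xi)$ then they are isomorphic as Leibniz algebras; since both carry the underlying vector space $\mathfrak{h}$, any such isomorphism is a linear automorphism $\sigma$ of $\mathfrak{h}$ intertwining the two deformed brackets, whence $r \sim R$ by the observation. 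Finally, $[\Xi : \mathfrak{g}]^{f} = |{\mathcal H}{\mathcal A}^{2}(\mathfrak{h}, \mathfrak{g}\,|\,(\triangleright, \triangleleft, \leftharpoonup, \rightharpoonup))|$ follows at once from this bijection and the definition of the factorization index.

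The only computational ingredient is the expansion underlying the initial observation, and that is short since \deref{equivLie} was designed precisely to match it; the substantial work has already been carried out in \thref{classif}, \thref{deforLie} and \thref{descrierecomlie}. The one point requiring minor care is that ``isomorphism classes of $\mathfrak{g}$-complements'' must be read as isomorphism classes of abstract Leibniz algebras (rather than of subalgebras of $\Xi$), which is consistent with the formulation of \thref{descrierecomlie} and is what makes the injectivity argument go through without having to control the interaction of $\sigma$ with the ambient algebra $\Xi$.
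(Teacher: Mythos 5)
Your proposal is correct and follows essentially the same route as the paper: the paper's own proof is a one-line reduction to \thref{descrierecomlie} together with exactly your key observation that $r \sim R$ in the sense of \deref{equivLie} holds if and only if $\mathfrak{h}_{r} \cong \mathfrak{h}_{R}$ as Leibniz algebras, which you correctly verify by expanding $\sigma([x,y]_r) = [\sigma(x),\sigma(y)]_R$ against \equref{rLiedef}. Your additional care about reading $\mathcal{F}(\mathfrak{g},\Xi)$ as abstract isomorphism classes is consistent with the paper's usage (cf.\ the computation of the factorization index in \exref{mpleib}).
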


\begin{proof}
Follows from \thref{descrierecomlie} taking into account the fact
that two deformation maps $r$ and $R$ are equivalent in the sense
of \deref{equivLie} if and only if the corresponding Leibniz
algebras $\mathfrak{h}_r$ and $\mathfrak{h}_R$ are isomorphic.
\end{proof}

\begin{example}\exlabel{mpleib}
Let $\mathfrak{h}$ be the abelian Lie algebra of dimension $2$
with basis $\{f_1, f_2\}$ and $\mathfrak{g}$ the Lie algebra with
basis $\{e_1, e_2 \}$ and the bracket: $[e_2, \, e_1] = - [e_1,\,
e_2] = e_2$. Then there exists a matched pair of Leibniz algebras
$(\mathfrak{g}, \, \mathfrak{h}, \, \triangleleft, \,
\triangleright, \, \leftharpoonup,\, \rightharpoonup)$, where the
non-zero values of the actions are given as follows:
\begin{eqnarray*}
f_{1} \triangleleft e_{1} &:=& f_{1}, \quad f_{2} \triangleleft
e_{1} := f_{2}, \quad f_{1} \triangleright e_{1} := e_{2}, \quad
e_{1} \leftharpoonup f_{1} := - e_{2}, \quad e_{1} \rightharpoonup
f_{1} := - f_{1}
\end{eqnarray*}
The bicrossed product $\Xi = \mathfrak{g} \bowtie \mathfrak{h}$
associated to this matched pair is the following $4$-dimensional
Leibniz algebra having $\{e_1, e_2, f_1, f_2\}$ as a basis and the
bracket given by:
$$
[e_2, \, e_1] = - [e_1,\, e_2] = e_2, \, \, [f_{1},\, e_{1}] =
f_{1}+e_{2}, \, \, [e_{1}, f_{1}]= - f_{1} - e_{2}, \, \, [f_{2},
\, e_{1}] = f_{2}
$$
Furthermore, the deformation maps associated with the above
matched pair of Leibniz algebras are given as follows:
\begin{eqnarray*}
\overline{r}_{(\gamma, \, \delta)}&:& \mathfrak{h} \to
\mathfrak{g}, \quad \overline{r}(f_{1}) = \gamma e_{2}, \quad
\overline{r}(f_{2}) = \delta e_{2}\\
r_{(\alpha, \, \beta)}&:& \mathfrak{h} \to \mathfrak{g}, \quad
r(f_{1}) = \alpha e_{2}+ \beta e_{1}, \quad r(f_{2}) = 0
\end{eqnarray*}
for some scalars $\alpha$, $\beta$, $\gamma$, $\delta \in k$. One
can easily see that $\mathfrak{h}_{\overline{r}_{(\gamma, \,
\delta)}}$ coincides with the Lie algebra $\mathfrak{h}$ for all
$\gamma$, $\delta \in k$ while $\mathfrak{h}_{r_{(\alpha, \,
\beta)}}$ has the bracket given by:
$$
[f_{1}, \, f_{2}]_{r_{(\alpha, \, \beta)}} =
[f_{1}, \, f_{1}]_{r_{(\alpha, \, \beta)}} = [f_{2}, f_{2}]_{r_{(\alpha, \, \beta)}} = 0,
\quad [f_{2}, \, f_{1}]_{r_{(\alpha, \, \beta)}} = \beta f_{2}
$$
Therefore, if $\beta = 0$ then $\mathfrak{h}_{r_{(\alpha, \,
\beta)}}$ again coincides with $\mathfrak{h}$. If $\beta \neq 0$,
then for any $\alpha \in k$ and $\beta \in k^*$, the Leibniz
algebra $\mathfrak{h}_{r_{(\alpha, \, \beta)}}$ is isomorphic to
the Leibniz algebra $\mathfrak{k}$ with basis $\{F_{1}, \,
F_{2}\}$ and the bracket given by: $[F_{1},\, F_{1}] = F_{2}$,
$[F_{2},\, F_{1}] = F_{2}$. The isomorphism $\psi: \mathfrak{k}
\to \mathfrak{h}_{r_{(\alpha, \, \beta)}}$ is given by:
$\psi(F_{1}) :=  f_{1} + f_{2}$, $\psi(F_{2}) := \beta f_{2}$.
Since obviously $\mathfrak{k}$ is not isomorphic to the abelian
Lie algebra $\mathfrak{h}$ we obtain that the extension $
\mathfrak{g} \subseteq \Xi$ has factorization index $[\Xi :
\mathfrak{g}]^f = 2$.
\end{example}

\end{document}